\begin{document}

\theoremstyle{plain}
\swapnumbers
 \newtheorem{theorem}{Theorem}[section]
\newtheorem{lemma}[theorem]{Lemma}
\newtheorem{corollary}[theorem]{Corollary}
\newtheorem{proposition}[theorem]{Proposition}

\newtheorem*{thma}{Theorem A}
\newtheorem*{thmb}{Theorem B}
\newtheorem*{thmc}{Theorem C}
\newtheorem*{thmd}{Theorem D}
\newtheorem*{thme}{Theorem E}

\theoremstyle{definition}
\newtheorem{definition}[theorem]{Definition}
\newtheorem{example}[theorem]{Example}
\newtheorem{prop}[theorem]{Proposition}
\newtheorem{problem}[theorem]{Problem Statement}
\newcommand{\R}{\mathbb{R}}
\newcommand{\Z}{\mathbb{Z}}
\theoremstyle{remark}
\newtheorem{conv}[theorem]{Convention}
\newtheorem{fact}[theorem]{Fact}
\newtheorem{ob}[theorem]{Observation}
\newtheorem{remark}[theorem]{Remark}
\newtheorem{ack}[theorem]{Acknowledgement}

%
%
\title[Flip Stiefel Manifold]{A study of topology of the flip Stiefel manifolds}
\author[S.~Basu]{Samik Basu}
\address{Stat-Math Unit,
Indian Statistical Institute,
B. T. Road, Kolkata-700108, India.}
\email{samik.basu2@gmail.com; samikbasu@isical.ac.in}
\author[S.~Gondhali]{Shilpa Gondhali}
\address{Department of Mathematics\\ Birla Institute of Technology and Science (BITS)-Pilani, K K Birla
Goa Campus\\ 403726 Goa, India.}
\email{shilpag@goa.bits-pilani.ac.in, shilpa.s.gondhali@gmail.com}
\author[F.~Safikaa]{ Fathima Safikaa}
\address{Department of Mathematics\\ Birla Institute of Technology and Science (BITS)-Pilani, K K Birla
Goa Campus\\ 403726 Goa, India.}
\email{p20180409@goa.bits-pilani.ac.in, fsafikaa@gmail.com }
\date{\today}
\subjclass{Primary : 57T15, 57S17  Secondary : 57R20, 55T10}
\keywords{Flip Stiefel manifolds, Serre Spectral Sequence, Equivariant maps, Parallelizability, Span, Stable span, Stiefel Whitney classes}

\begin{abstract}
A well known quotient of the real Stiefel manifold is the projective Stiefel manifold.  We introduce a new family of quotients of the real Stiefel manifold by cyclic group of order 2 whose action is induced by simultaneous pairwise flipping of the coordinates. We obtain a description for their tangent bundles, compute their mod 2 cohomology and compute the Stiefel Whitney classes of these manifolds. We use these to provide applications to their stable span, parallelizability and equivariant maps, and some associated results in topological combinatorics.
\end{abstract}
\maketitle

\newcommand{\mR}{{\mathbb R}}
\newcommand{\mC}{{\mathbb C}}
\newcommand{\diag}{\text{diag}}
\newcommand{\nf}[2]{N_F({#1},{#2})}
\newcommand{\np}[2]{N_P({#1},{#2})}
\newcommand{\mZ}{{\mathbb Z}}
\newcommand{\mS}{{\mathbb S}}
\newcommand{\mRP}{{\mathbb RP}}

\section{Introduction}\label{sintroduction}

We know that the Stiefel manifold was used by Adams \cite{a} in his path-breaking work on Vector fields on spheres. We begin by recalling its definition.
\begin{definition}
Let $1 \leq k \leq n$. The real Stiefel manifold, denoted by $V_{n,k}$, is the set of all $k$-tuples of linearly independent vectors in $\mR^n.$
\end{definition}

{{} Notice that $V_{n,k} \subseteq (\mR^n)^k$ and it inherits the subspace topology of $(\mR^n)^k.$ Moreover, }$V_{n,k}$ is homotopically equivalent (using {}{the} Gram Schmidt process) to {{} the subspace} of all ordered $k$-tuples of unit vectors in $\mR^n$ which are pairwise orthogonal with respect to the Euclidean inner product. {{} Using the Orbit-Stabilizer theorem, $V_{n,k}$ is identified with} $O(n)/O(n-k)$ and hence, {{}$V_{n,k}$ has a  quotient topology. It is a fact that the quotient topology and subspace topology on $V_{n,k}$ are equivalent \cite[Page 14,15]{j}.} Note that,  $V_{n,1}= \mS^{n-1}$ and $V_{n,n}= O(n).$  \\

The projective Stiefel manifold, which is a quotient of the Stiefel manifold, has been extensively studied in literature \cite{LS80,GH68}. 

\begin{definition}\label{d1.2}
The real projective Stiefel manifold, denoted by $PV_{n,k}$, is defined to be the quotient of $V_{n,k}$ modulo the free action of the abelian group of order 2, denoted by $C_2:= \{ \pm 1\}$ which acts via scalar multiplication:
\[
z\cdot(v_{1},\ldots, v_{k}) = (zv_{1}, \ldots, zv_{k})\text{ for}~  (v_1, \ldots, v_k) \in V_{n,k} ~\text{and}~ z \in C_2.
\]
\end{definition}

Observe that $PV_{n,k}$ is a homogeneous space. $PV_{n,k} = O(n) / (C_2 \times O(n-k))$, where $C_2$ is identified with the set \(\{ \pm I_{k \times k}\}.\)  In particular, $PV_{n, 1}= \mR P^{n-1},$ the real projective space.\\

Although the Stiefel manifold is intensively discussed in \cite{j}, not much has been explored about its quotients. We refer to \cite[Section 2]{s} for a detailed discussion. We consider a class of quotients of the real Stiefel manifold, called the {\it flip Stiefel  manifold} which is defined as follows.

\begin{definition}\label{d1.3}
Let $1 < 2k \leq n$. Consider an action of $C_2= \{e, a\},$ the group of order two, on $V_{n, 2k}$ given by
\[
a \cdot (v_1, v_2, \ldots, v_{2k})= (v_2, v_1, v_4, v_3, \ldots, v_{2k}, v_{2k- 1})
\]
where $(v_1, v_2, \ldots, v_{2k}) \in V_{n, 2k}.$ A {\it{flip Stiefel manifold}}, denoted by the symbol $FV_{n,2k}$, is the quotient space of $V_{n,2k}$ under the given action of $C_2.$
\end{definition}

Another way to describe this action is using matrix representation.
\begin{itemize}
\item Represent each element $v = (v_1, v_2, \ldots, v_{2k}) \in V_{n,2k}$ as an $n\times 2k$ matrix with entries $v_{1}, v_{2}, \ldots, v_{2k}$ as mutually perpendicular column vectors of $\mR^n$.
\item We see that the action of $a$ on $V= (v_1, v_2, \ldots, v_{2k})$ is given by multiplication from right by a permutation matrix $F$ of order $2k \times 2k.$
\item To be precise,
\[
a \cdot V=
\begin{matrix}
\begin{matrix} v_1 & v_2 &\cdots & v_{2k} \end{matrix} & \\
\begin{bmatrix}
v_{11} & v_{12} & \cdots & v_{12k} \\
v_{21} & v_{22} & \cdots & v_{22k}\\
\vdots & \ddots & \ddots & \vdots \\
v_{n1} & v_{n2} & \cdots & v_{n2k}
\end{bmatrix} &
\begin{matrix} \times  \end{matrix}&
\begin{matrix} F \end{matrix}

\end{matrix}\]
\end{itemize}
where $F= \diag(f, f,\ldots, f)$ with $f= \begin{bmatrix}
0 & 1\\
1 & 0
\end{bmatrix}.$\\

\noindent A straightforward argument gives:
\begin{lemma}\label{l1.4}
The flip Stiefel manifold \(FV_{n,2k}\) is a homogeneous space, and it can be identified with \(\frac{O(n)}{C_2 \times O(n-2k)} \), where \(C_2= \{ I_{2k\times 2k},F\}\) and \(F\) is the matrix defined as above.
\end{lemma}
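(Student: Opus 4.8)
The plan is to exhibit $FV_{n,2k}$ as a homogeneous space by producing a transitive $O(n)$-action on it and then computing the associated isotropy subgroup. I start from the identification $V_{n,2k}\cong O(n)/O(n-2k)$, under which $O(n)$ acts on frames by left multiplication: writing $E=\left(\begin{smallmatrix}I_{2k}\\0\end{smallmatrix}\right)$ for the standard frame, an orthogonal matrix $A$ sends $E$ to $AE$, the matrix of its first $2k$ columns. The first point to record is that this left action commutes with the flip action, since the flip is right multiplication by $F$: for $A\in O(n)$ and a frame $V$ one has $A\cdot(VF)=(A\cdot V)F$. Consequently the left $O(n)$-action descends to $FV_{n,2k}=V_{n,2k}/C_2$, and because it is transitive on $V_{n,2k}$ while the quotient map is surjective and equivariant, it is transitive on $FV_{n,2k}$. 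This already shows that $FV_{n,2k}$ is a homogeneous space for $O(n)$.

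The substance of the argument is the computation of the isotropy group at the class $[E]\in FV_{n,2k}$. An element $A\in O(n)$ fixes $[E]$ exactly when $AE$ lies in the two-element $C_2$-orbit $\{E,\,EF\}$, where $EF=\left(\begin{smallmatrix}F\\0\end{smallmatrix}\right)$. In the first case, $AE=E$, the first $2k$ columns of $A$ are $e_1,\dots,e_{2k}$, and orthogonality forces $A=\left(\begin{smallmatrix}I_{2k}&0\\0&B\end{smallmatrix}\right)$ with $B\in O(n-2k)$; this is the standard copy of $O(n-2k)$.

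In the second case, $AE=EF$, the first $2k$ columns of $A$ are the standard basis vectors permuted by $F$, so they span $\mathrm{span}(e_1,\dots,e_{2k})$; orthogonality of $A$ then forces the remaining columns into $\mathrm{span}(e_{2k+1},\dots,e_n)$, whence $A=\left(\begin{smallmatrix}F&0\\0&B\end{smallmatrix}\right)=\tilde F\left(\begin{smallmatrix}I_{2k}&0\\0&B\end{smallmatrix}\right)$ with $B\in O(n-2k)$ and $\tilde F=\diag(F,I_{n-2k})$. Hence the isotropy group equals $O(n-2k)\cup\tilde F\,O(n-2k)$. Because $f^2=I$ gives $\tilde F^2=I_n$, because $\tilde F$ commutes with each $\left(\begin{smallmatrix}I_{2k}&0\\0&B\end{smallmatrix}\right)$, and because $\tilde F\notin O(n-2k)$, this set is exactly the internal direct product $\{I_{2k},F\}\times O(n-2k)=C_2\times O(n-2k)$, embedded block-diagonally in $O(n)$. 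The orbit-stabilizer theorem, together with the standard fact (invoked for $V_{n,k}$ in \cite{j}) that for the compact group $O(n)$ the resulting continuous equivariant bijection is a homeomorphism, then yields $FV_{n,2k}\cong O(n)/(C_2\times O(n-2k))$.

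I expect the one genuinely delicate step to be the second case of the isotropy computation, where one must exclude any ``mixed'' off-diagonal structure for $A$; this is precisely where orthogonality of $A$ (equivalently, the fact that its first $2k$ columns already exhaust $\mathrm{span}(e_1,\dots,e_{2k})$) is used. Everything else is routine orbit-stabilizer bookkeeping together with the same quotient-versus-subspace-topology comparison already needed for $V_{n,k}$.
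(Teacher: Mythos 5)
Your proposal is correct and follows essentially the same route as the paper: the descended transitive left $O(n)$-action on $FV_{n,2k}$, the orbit-stabilizer theorem applied at the class of the standard frame $[(e_1,\dots,e_{2k})]$, and the identification of the stabilizer as the block-diagonal copy of $C_2\times O(n-2k)$. The only difference is that you spell out details the paper leaves implicit (equivariance of the quotient map, the two-case computation of the isotropy group, and the compactness argument for the homeomorphism), which is fine.
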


\begin{proof}
This follows using the orbit-stabilizer theorem. Take \(G= O(n)\) and consider the action of \(G\) on \(FV_{n,2k}\) given by
\[
A \cdot [(v_1, v_2, \ldots, v_{2k})]= [(Av_1, Av_2, \ldots, Av_{2k})] ~\text{for all}~ A \in G ~\text{and}~  [(v_1, v_2, \ldots, v_{2k})] \in FV_{n,2k}.
\]
Since similar action of \(G\) on \(V_{n,2k}\) is transitive and the map \(V_{n,2k} \to FV_{n,2k}\) is a surjection, the action of \(G\) on \(FV_{n,2k}\) is transitive. Hence, we have
\[
FV_{n,2k}= G/ S_{v_0}
\]
where $S_{v_0}$ is the stabilizer of an element $v_0$ of $FV_{n,2k}$. Consider \(v_0= [(e_1, e_2, \ldots, e_{2k})]\) where \(e_1, e_2, \ldots, e_{2k}\) are the standard basis elements of \(\mR^n.\)\\
Now, we see that
\[
S_{v_0}= \big\{\diag(A, B)\mid A \in \{I_{2k\times 2k}, F\}, B \in O(n-2k)\big\} \cong C_2 \times O(n-2k).
\]
This completes our proof.
\end{proof}

\begin{remark}
(1) Notice that the topology we obtain using the identification of  \(FV_{n,2k}\) with  \(\frac{O(n)}{C_2 \times O(n-2k)} \) is equivalent to the quotient topology on \(FV_{n,2k}\) obtained using the quotient map \(V_{n,2k} \to FV_{n,2k}.\) \\
(2) The quotient map $V_{n,2k} \to FV_{n,2k}$ is a double cover and hence the dimension of the manifold $FV_{n,2k}$ is the same as the dimension of $V_{n,2k}.$ Hence,
\[
\text{dim}(FV_{n,2k})= \frac{n(n-1)}{2}- \frac{(n-2k)(n-2k-1)}{2}= k(2n-2k-1).
\]
\end{remark}

In this paper, we begin by describing the tangent bundle $TFV_{n,2k}$ in Section \ref{stangent}, further obtaining expressions for {}{the} (additive structure of) mod-2 cohomology of $FV_{n,2k}$ in Section \ref{scohomology}. We then use it to get information about non-parallelizability, span and stable span in Section \ref{sparallelizability}. Finally, we conclude by discussing some equivariant maps {{} and  related geometric consequences} in the last section, Section \ref{sequivariant}.  The following are a few results that we prove in the paper.

\begin{thma}[Parallelizability]
$FV_{n,2k}$ is not parallelizable when
\begin{enumerate}
\item $k \equiv 1 \pmod{2}$ and $n \equiv 1 \pmod{2},$
\item $k \equiv 1 \pmod{4}$ and $n \equiv 0 \pmod{4},$
\item $k \equiv 2 \pmod{4}$ and $n \equiv 0 \pmod{4},$
\item $k \equiv 2 \pmod{4}$ and $n \equiv 2 \pmod{4},$
\item $k \equiv 3 \pmod{4}$ and $n \equiv 2 \pmod{4}.$
\end{enumerate}
\end{thma}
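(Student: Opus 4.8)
The plan is to detect non-parallelizability through Stiefel--Whitney classes: since a parallelizable manifold has trivial tangent bundle, and hence $w_i=0$ for all $i>0$, it suffices to exhibit in each of the five cases a single non-zero class $w_i(TFV_{n,2k})$ with $i\ge 1$. I would start from the description of $TFV_{n,2k}$ obtained in Section~\ref{stangent}. Using Lemma~\ref{l1.4} together with the isotropy representation of $C_2\times O(n-2k)$ on the quotient of Lie algebras, the tangent bundle splits as a Whitney sum $TFV_{n,2k}\cong E_1\oplus E_2$, where $E_1$ arises from the conjugation action of the flip $F$ on the $2k\times 2k$ skew block, and $E_2$ from the off-diagonal block $\mR^{2k}\otimes\mR^{n-2k}$. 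Since $F$ has $+1$- and $-1$-eigenspaces of equal dimension $k$ on $\mR^{2k}$, both summands are assembled from the trivial bundle, the line bundle $\xi$ of the double cover $V_{n,2k}\to FV_{n,2k}$, and the rank $(n-2k)$ complement bundle $\gamma$. Writing $c=w_1(\xi)$ and using $\gamma\oplus(k\,\xi\oplus k\,\underline{\mR})\cong\underline{\mR^n}$ gives $w(\gamma)=(1+c)^{-k}$, and a short computation then collapses the total class to the clean form
\[
w(TFV_{n,2k})=\big(w(\xi\otimes\gamma)\big)^{k},
\]
an element of the subring $\mZ/2[c]\subseteq H^*(FV_{n,2k};\mZ/2)$.

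Next I would expand $w(\xi\otimes\gamma)=\sum_i W_i\,c^i$ via the splitting-principle identity $w(\xi\otimes\gamma)=\sum_j w_j(\gamma)(1+c)^{\,n-2k-j}$, so that each $W_i$ is an explicit sum of binomial coefficients in $n$ and $k$; raising to the $k$-th power then yields the coefficient of $w(TFV_{n,2k})$ in every degree. To conclude non-parallelizability in a given case it is enough to find one degree $i\ge 1$ in which this coefficient is odd \emph{and} in which $c^i\neq 0$ in $H^i(FV_{n,2k};\mZ/2)$; the latter is governed by the height of $c$, which I would read off from the additive cohomology computed in Section~\ref{scohomology}. The degree-one coefficient equals $k(n-k)$, which is odd exactly when $k$ is odd and $n$ is even, immediately settling cases~(2) and~(5); the remaining cases~(1),~(3),~(4) have $w_1=0$ and are handled by passing to the first surviving higher coefficient.

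The main obstacle is precisely this search for the right degree. The parities of the $W_i$, and of the coefficients of their $k$-th power, must be extracted via Lucas' theorem, and the five-way split on $n,k\bmod 4$ is exactly the record of which low-degree coefficient is the first to be odd. The delicate point is to match that degree against the height of $c$ from Section~\ref{scohomology}, ensuring the chosen $c^i$ is genuinely non-zero in the cohomology ring rather than lying beyond the top non-vanishing power of $c$. Organizing these parity computations so that each listed congruence pair produces a verified non-zero $w_i(TFV_{n,2k})$ constitutes the bulk of the work.
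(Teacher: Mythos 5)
Your high-level strategy (exhibit a non-vanishing Stiefel--Whitney class) is exactly the paper's, but your tangent-bundle computation takes a different route and lands on a different formula, and the difference matters. The paper derives its formula from the fibration $f\colon FV_{n,2k}\to G_{\mR}(2,\ldots,2,n-2k)$ together with the claim (the corollary in Section \ref{stangent}) that the bundle along the fiber is trivial, $\alpha\cong k\epsilon_{\mR}$; this leads to $w(TFV_{n,2k})=(1+x)^{k(n-k-1)}$ in Section \ref{sparallelizability}, and the paper then settles case (1) by $w_1$ and cases (2)--(5) by $w_2$. Your isotropy-representation computation gives instead $TFV_{n,2k}\cong k(k-1)\epsilon_{\mR}\oplus k^2\xi_{n,2k}\oplus k\gamma_{n,2k}\oplus k(\xi_{n,2k}\otimes\gamma_{n,2k})$, hence (since tensoring $k\xi\oplus k\epsilon_{\mR}\oplus\gamma\cong n\epsilon_{\mR}$ with $\xi$ gives $w(\xi\otimes\gamma)=(1+x)^{n-k}$) the total class $w(TFV_{n,2k})=\bigl(w(\xi\otimes\gamma)\bigr)^{k}=(1+x)^{k(n-k)}$ --- off from the paper's by a factor $(1+x)^{k}$. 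The source of the discrepancy is the vertical bundle: $f$ is \emph{not} a principal bundle (the right $O(2)^k$-action does not descend to $FV_{n,2k}$; only the centralizer of $F$ does), and conjugation by the flip acts as $-1$ on each $\mathfrak{so}(2)$, so the fiberwise rotation fields are anti-invariant and $\alpha\cong k\xi_{n,2k}$, not $k\epsilon_{\mR}$. Your exponent, not the paper's, survives the basic sanity check at $(n,k)=(3,1)$: $FV_{3,2}\cong SO(3)/\{I,\diag(f,-1)\}$ is a quotient of a Lie group by a finite subgroup, hence parallelizable (the paper itself asserts this for $FV_{n,n-1}$, $n$ odd, in Section \ref{sparallelizability}); your formula gives $w=(1+x)^2=1+x^2=1$ because $\nf{3}{2}=2$ forces $x^2=0$, whereas the paper's formula gives $w=1+x\neq 1$.

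The genuine gap is therefore in your deferred cases, and it cannot be filled. With the exponent $k(n-k)$: cases (2) and (5) do follow from $w_1$ as you say, since $k(n-k)$ is odd exactly when $k$ is odd and $n$ is even (assuming $2k<n$ so that $x\neq 0$); case (3) can also be completed, because there $k(n-k)\equiv 4\pmod 8$, so $\binom{k(n-k)}{4}$ is odd by Lucas while $n-2k\geq 4$ forces $\nf{n}{2k}\geq 5$, i.e.\ $w_4\neq 0$. But in cases (1) and (4) the promised ``first surviving higher coefficient'' need not exist. In case (1), every pair $(n,k)=(2k+1,k)$ with $k$ odd gives $FV_{2k+1,2k}\cong SO(2k+1)/C_2$, which is parallelizable, so all classes vanish and the assertion being proved is itself false at these values (the paper's own parallelizability lemma for $FV_{n,n-1}$ already contradicts its Theorem A(1) there). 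In case (4), take $(n,k)=(6,2)$: your formula gives $w=(1+x)^8$ while $\nf{6}{4}=4$ makes $x^4=0$, so every Stiefel--Whitney class vanishes and the method is silent. So your proposal, carried out correctly, proves non-parallelizability in cases (2), (3), (5) for $2k<n$, but cannot deliver cases (1) and (4); what it actually uncovers is that the paper's proof rests on the erroneous identification $\alpha\cong k\epsilon_{\mR}$, and that the resulting exponent $k(n-k-1)$, and with it the full five-case statement, is not correct.
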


\begin{thmb}[Span and stable span]
Span ($FV_{n,2k}$)= Stable span ($FV_{n,2k}$) when
\begin{enumerate}
\item $k \equiv 0 \pmod{4}$ or  $k \equiv 2 \pmod{4},$
\item $k \equiv 1 \pmod{4}$ and $n \equiv 0 \pmod{4}$ for $k>1,$
\item $k \equiv 1 \pmod{4}$ and $n \equiv 2 \pmod{4}$ for $k>1,$
\item $k \equiv 3 \pmod{4}$ and $n \equiv 0 \pmod{4}$ for $k>3.$
\end{enumerate}
\end{thmb}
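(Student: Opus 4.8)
The plan is to establish the two inequalities separately. The inequality $\mathrm{span}(FV_{n,2k})\le\mathrm{stablespan}(FV_{n,2k})$ is automatic, since linearly independent vector fields stay independent after adding a trivial summand. Thus the entire content is the reverse inequality, which is a \emph{destabilization} statement: an honest family of $s=\mathrm{stablespan}(FV_{n,2k})$ independent sections of $TFV_{n,2k}\oplus\varepsilon^N$ must be compressed to $s$ independent sections of $TFV_{n,2k}$ itself. Writing $d=\dim FV_{n,2k}=k(2n-2k-1)$, obstruction theory for sections of the $s$-frame bundle (fiber the Stiefel manifold $V_{d,s}$, which is $(d-s-1)$-connected) locates the sole remaining obstruction in the top group $H^{d}(FV_{n,2k};\pi_{d-1}(V_{d,s}))$, the lower obstructions being killed by the stable framing. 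I would then invoke the classical span-versus-stable-span machinery (Atiyah--Dupont, Bredon--Kosi\'{n}ski, as organized in the Korba\v{s}--Zvengrowski survey) identifying this obstruction with a concrete characteristic class whose nature is dictated by $d\bmod 4$: the Euler characteristic when $d$ is even, the Kervaire semicharacteristic when $d\equiv 1\pmod 4$, and a more amenable class when $d\equiv 3\pmod 4$. Equality $\mathrm{span}=\mathrm{stablespan}$ reduces to the vanishing of this single class.

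The inputs I would assemble first are the stable span $s$, read off from the description of $TFV_{n,2k}$ in Section \ref{stangent} together with the mod $2$ cohomology and Stiefel--Whitney classes of Section \ref{scohomology} (the very nonvanishing classes behind Theorem~A furnish the upper bound for $s$). The decisive arithmetic is $d=k(2n-2k-1)\equiv k\pmod 2$, since $2n-2k-1$ is odd; refining modulo $4$ via the parity of $n-k$ gives $d\equiv 0,2\pmod4$ for $k$ even, $d\equiv 1\pmod 4$ in the families $k\equiv 1\pmod4$, and $d\equiv 3\pmod4$ in the family $k\equiv 3\pmod4$ with $n\equiv 0\pmod4$. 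This is precisely what makes the four stated families split along $k\bmod 4$ and, in the odd cases, along $n\bmod 4$.

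With the obstruction identified I would treat the families in turn. For $k\equiv 0,2\pmod 4$ (case (1)) the dimension $d$ is even and $\chi(FV_{n,2k})=0$: indeed $V_{n,2k}\to FV_{n,2k}$ is a double cover and $\chi(V_{n,2k})=0$ since $2k\ge 2$, so $\chi(FV_{n,2k})=\tfrac12\chi(V_{n,2k})=0$. Hence the Euler-type top obstruction dies, with no hypothesis on $n$. For $k\equiv 1\pmod 4$ with $n\equiv 0\pmod4$ or $n\equiv 2\pmod 4$ (cases (2),(3)) one has $d\equiv 1\pmod4$, and the top obstruction is the Kervaire semicharacteristic $\widehat\chi(FV_{n,2k})$, which I would compute directly from the mod $2$ Betti numbers furnished by Section \ref{scohomology}; the stated $n$-congruences are exactly the arithmetic conditions forcing $\widehat\chi=0$. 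For $k\equiv 3\pmod4$ with $n\equiv 0\pmod4$ (case (4)) one has $d\equiv 3\pmod4$, where the Atiyah--Dupont vanishing for this residue, together with a residual Stiefel--Whitney condition holding when $n\equiv 0\pmod4$ and $k>3$, disposes of the obstruction. The combinations omitted from the list (for instance $k\equiv 3,\ n\equiv 2\pmod4$) are precisely those where the governing binomial coefficients survive mod $2$ and the obstruction need not vanish.

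The hard part will be the precise identification and evaluation of this top obstruction: matching $\pi_{d-1}(V_{d,s})$ and the image of the stable framing under the final $k$-invariant to an explicit element of $H^d(FV_{n,2k};\Z)$ or $H^d(FV_{n,2k};\Z/2)$, and then expressing that element through the Stiefel--Whitney data already in hand. The odd families (2)--(4), where the answer genuinely depends on $n\bmod 4$, are the crux: there the vanishing is not formal but rests either on the mod $2$ binomial-coefficient computations driving Theorem~A or on the explicit semicharacteristic count from the cohomology of Section \ref{scohomology}. Confirming the exact value of $s$ (hence the parity of $d-s$ feeding the coefficient group) and checking that the listed congruences are exactly the vanishing locus is where the main effort lies; the even families (1) should follow comparatively formally from $\chi(FV_{n,2k})=0$.
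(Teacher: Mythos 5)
Your case (1) is essentially right and agrees with the paper: $\dim FV_{n,2k}=k(2n-2k-1)$ is even exactly when $k$ is, $\chi(FV_{n,2k})=0$ (the paper deduces this from $\mathrm{span}\geq k\geq 1$, using the $k\epsilon_{\mR}$ summand in Equation (\ref{tgtflip}); your double-cover argument is equally valid), and the even-dimensional destabilization criterion then applies. The genuine gaps are in cases (2)--(4). Your framework --- a single top obstruction in $H^{d}$ identified with $\chi$, the Kervaire semicharacteristic, or ``a more amenable class'' according to $d\bmod 4$ --- is only the story for producing one or two sections; it cannot by itself give the full equality of span and stable span when both may be large, and without extra hypotheses the identifications you describe are simply false. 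For instance $S^{15}$ is spin, has $\dim\equiv 3\pmod 4$ and all characteristic classes zero, yet $\mathrm{span}(S^{15})=8<15=\mathrm{stable\ span}(S^{15})$, so no ``residual Stiefel--Whitney condition'' can dispose of the obstruction in your case (4) as stated. What the paper actually invokes are Koschorke's criteria \cite[Theorem 20.1, Corollaries 20.9, 20.10]{k}, phrased in terms of $\dim \bmod 4$ and the vanishing of $w_1$ (respectively $w_1, w_2$), and whose additional hypotheses are exactly where the restrictions $k>1$ in (2),(3) and $k>3$ in (4) enter (they are supplied by $\mathrm{span}(FV_{n,2k})\geq k$). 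Your proposal neither explains these restrictions nor has room for them.

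Second, the decisive computations are missing, and the substitute you propose does not work. The mechanism behind (2)--(4) is not a semicharacteristic count but Stiefel--Whitney vanishing: from Equation (\ref{stableiso}) the paper gets $w(TFV_{n,2k})=(1+x)^{k(n-k-1)}$, hence $w_1=k(n-k-1)x$ and $w_2=\binom{k(n-k-1)}{2}x^2$; the congruences ``$k$ odd, $n$ even'' are precisely what force $w_1=0$ (so $w_1^2=0$, Koschorke's hypothesis in dimension $\equiv 1\pmod 4$), and $n\equiv 0\pmod 4$ in case (4) additionally forces $w_2=0$. You never compute $w_1$ or $w_2$. Instead you propose to evaluate the Kervaire semicharacteristic ``from the mod $2$ Betti numbers'' of Section \ref{scohomology} and assert, without computation, that the stated congruences force it to vanish. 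This step fails on its own terms: the Atiyah--Dupont invariant is the \emph{real} semicharacteristic $\sum_{i\leq (d-1)/2}\dim H^{i}(M;\mathbb{R})\bmod 2$, which is not determined by mod $2$ Betti numbers (the two semicharacteristics differ by a characteristic number, by Lusztig--Milnor--Peterson), and the paper computes only mod $2$ cohomology, so the input for your proposed calculation is not available. Consequently, for cases (2)--(4) the proposal contains no verifiable argument, and even the exact value of the stable span $s$, which your plan requires as an input, is never needed in (and never computed by) the paper's route.
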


\begin{thmc}[Equivariant maps]
Let $H'= \{ \pm 1\}$ and $H =\{ I_{2k\times 2k}, F\}$ where $F$ is as defined earlier. Let $(H', V_{n,2k})$ denote the action of $H'$ on $V_{n,2k}$ by antipodal identification as defined in  Definition \ref{d1.2} and $(H, V_{n,2k})$ denote the action of $H$ on $V_{n,2k}$ as defined in Definition \ref{d1.3}.
\begin{enumerate}
\item There doesn't exist any $H$-equivariant map $f:V_{n,2k} \rightarrow V_{m,2l}$, for $ k = 1+ \sum\limits_{i=0}^{r-1}2^i $ and $l = 1 + \sum\limits_{ i = 0 }^{s-1}a_i2^i$, $a_i = 0,1$ and $r > s$.
\item There doesn't exist any equivariant map $f:(H', S^{n-1}) \rightarrow (H,V_{n,2k})$, for $ n = 2^r -1$ and $ r >0$.
\item If there  exists any equivariant map $f: (H, O(2k)) \rightarrow (H, O(2l))$ then  $ k \geq l$ and $ k \equiv 1 \pmod{2} $ and $l \equiv 0 \pmod{2}$.
\end{enumerate}
\end{thmc}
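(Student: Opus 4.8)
The unifying tool will be the Fadell--Husseini index of a free $C_2$-space, or equivalently the \emph{height} of the first Stiefel--Whitney class of its orbit double cover, which is monotone under equivariant maps. For a free $C_2$-space $X$ let $w_X \in H^1(X/C_2;\mZ/2)$ be the class classifying the double cover $X \to X/C_2$, i.e. the pullback of the generator $t \in H^1(\mRP^{\infty};\mZ/2)$ under the map $X/C_2 \to B C_2 = \mRP^{\infty}$, and set $h(X)=\max\{d : w_X^{\,d}\neq 0\}$. If $f:X\to Y$ is $C_2$-equivariant, the induced $\bar f : X/C_2 \to Y/C_2$ commutes with the classifying maps, so $\bar f^{*}w_Y=w_X$ and hence $\bar f^{*}(w_Y^{\,d})=w_X^{\,d}$; thus $w_X^{\,d}\neq 0$ forces $w_Y^{\,d}\neq 0$, giving $h(X)\le h(Y)$. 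The whole strategy is to compute $h$ for the spaces in the statement and read off the obstructions: no equivariant map can exist once the source has strictly larger height than the target.

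The key geometric input is a change of coordinates that diagonalizes $F$. Replacing each pair $(v_{2i-1},v_{2i})$ by $\tfrac{1}{\sqrt2}(v_{2i-1}+v_{2i},\,v_{2i-1}-v_{2i})$ is an orthogonal transformation of the frame, and it conjugates the flip action of Definition \ref{d1.3} to the action fixing $k$ frame vectors and negating the other $k$; in matrix terms $F$ is conjugate to $\diag(I_k,-I_k)$, with $\det F=(-1)^k$. Hence the $C_2$-equivariant projections onto the $k$ negated vectors and onto a single negated vector give equivariant maps
\[
V_{n,2k} \longrightarrow (V_{n,k},\ \text{scalar}), \qquad V_{n,2k} \longrightarrow (\mS^{n-1},\ \text{antipodal}),
\]
whose quotients are $PV_{n,k}$ and $\mRP^{n-1}$. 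By naturality these pull the known generators of $H^*(PV_{n,k};\mZ/2)$ and $H^*(\mRP^{n-1};\mZ/2)$ back to $w$, so the powers $w^d$, and hence $h(V_{n,2k})$, are controlled by the multiplicative cohomology of these classical quotients together with the additive computation of $H^*(FV_{n,2k};\mZ/2)$ from Section \ref{scohomology}. The vanishing line for $w^d$ is governed by binomial coefficients mod $2$, so the precise cut-off is extracted from Lucas' theorem.

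Parts (1) and (2) are then height comparisons. For (2), $h(\mS^{n-1})=n-1$, and when $n=2^r-1$ the mod $2$ cohomology forces $w^{\,n-1}=0$, i.e. $h(V_{n,2k})<n-1$, so monotonicity rules out any equivariant map $(\mS^{n-1},\text{antipodal})\to(V_{n,2k},\text{flip})$. For (1), the condition $k=1+\sum_{i=0}^{r-1}2^i=2^r$ places the source at maximal height, while $l=1+\sum_{i=0}^{s-1}a_i2^i$ with $r>s$ has a strictly shorter binary expansion and hence, by the Lucas-type count, strictly smaller height $h(V_{m,2l})<h(V_{n,2k})$; monotonicity again forbids the map. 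Finally, part (3) is where $\det F=(-1)^k$ enters: for $n=2k$ right multiplication by $F$ reverses orientation exactly when $k$ is odd, so for $k$ odd the cover $O(2k)\to FO(2k)$ is trivial (each component maps homeomorphically, $FO(2k)\cong SO(2k)$, $w=0$, $h=0$), whereas for $k$ even it is nontrivial, $FO(2k)$ is disconnected, and $h\ge1$. Feeding the parity combinations into the height inequality together with the (non)triviality of the covers eliminates every case except $k$ odd and $l$ even, and a residual comparison (via the $PV_{n,k}$-type generators and dimension) yields $k\ge l$. I expect the genuine difficulty to be twofold: first, upgrading the additive cohomology of Section \ref{scohomology} to the exact multiplicative height of $w$ and matching it to the binary conditions through Lucas' theorem; and second, assembling the several invariants --- height, connectivity, and triviality of the cover --- in part (3) so that precisely the stated parities survive.
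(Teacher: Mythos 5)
Your framework is the paper's own mechanism: the height $h(X)$ of $w_X$ is precisely the Fadell--Husseini index of Definition \ref{dindex} written multiplicatively (the index is the ideal $(x^{h(X)+1})$), your monotonicity statement is Proposition \ref{pindex}, and the heights are $\nf{n}{2k}-1$ by Theorem \ref{ind}. Part (2) of your sketch is then exactly the paper's argument: $h(S^{n-1})=n-1$ while $h(V_{n,2k})\le n-2$, the latter being Remark \ref{rindex}. Your diagonalization of $F$, and the resulting equivariant projections onto antipodal Stiefel manifolds and spheres, are correct and a nice addition; but note they only give the \emph{non-strict} bound $h(V_{n,2k})\le h(S^{n-1})=n-1$, which does not suffice for (2). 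The strict inequality has to come from the cohomological computation (equivalently from the chain $\nf{n}{2k}\le \nf{n}{2}=n-1$ behind Remark \ref{rindex}), which you invoke but do not carry out.

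In part (1) there is a gap, though it is one you share with the paper. The bound $h(V_{m,2l})\le 2^s-1$, i.e.\ $\nf{m}{2l}\le 2^s$, is not a consequence of Lucas' theorem alone: $\nf{m}{2l}$ is a minimum over the range $m-2l<j\le m$, so the odd binomial coefficient at $j=2^s$ is relevant only if $m-2l<2^s$. For $l=1$ (the case $s=0$) one has $\nf{m}{2}=m-1$, not $\le 1$; similarly $\nf{n}{2k}=2^r$ requires $n-2k<2^r$. This is not pedantry: without a hypothesis tying $m$ to $n$ the statement itself fails. For $k=2$, $l=1$, $m\ge n$, the map $V_{n,4}\to V_{m,2}$ sending $(v_1,v_2,v_3,v_4)$ to $(v_1,v_2)\in V_{n,2}\subseteq V_{m,2}$ is $H$-equivariant, although $k=1+2^0$, $l=1$, $r=1>0=s$. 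The paper's Proposition \ref{p}(2), and hence its proof of (1), has exactly the same unstated hypothesis, so your ``Lucas-type count'' reproduces the paper's gap rather than closing it.

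Part (3) is where your proposed assembly genuinely cannot work, and your own computations show why. You correctly find that for $k$ odd the cover $O(2k)\to FV_{2k,2k}$ is trivial, so $h(O(2k))=0$; but then the inequality $h(O(2k))\le h(O(2l))$ is vacuous and places no condition on $l$ whatsoever. Hence height, connectivity and cover triviality cannot ``eliminate every case except $k$ odd and $l$ even'': the case $k,l$ both odd survives, and it must, because equivariant maps exist there --- the identity when $k=l$, and in general, for $k$ odd and arbitrary $l$, the map sending $A$ to $I_{2l\times 2l}$ if $\det A=1$ and to the flip matrix of size $2l$ if $\det A=-1$ is $H$-equivariant (this uses $\det F=-1$ for $k$ odd and $F^2=I$). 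So the implication printed as (3) is false as stated and unprovable; your final step, extracting $k\ge l$ from ``generators and dimension'', also has no mechanism behind it, since equivariant maps need not respect dimension. For calibration: the paper does not prove (3) either; its argument only checks that for $k$ odd and $l$ even the obstruction vanishes ($\nf{2k}{2k}=1\le 2\le \nf{2l}{2l}$), i.e.\ that a map is ``possible'' --- a consistency remark, not the stated necessity. What the index method honestly yields here is: an equivariant map $O(2k)\to O(2l)$ forces $\nf{2l}{2l}\ge \nf{2k}{2k}$, which when $k$ is even forces $l$ even (and more when $k\equiv 0\pmod{4}$), and which gives nothing when $k$ is odd.
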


\begin{thmd}\label{td}
\begin{enumerate}
\item For any function $f:S^{n-1} \to \mR^{n-2}$, there exist orthogonal elements $v_1, v_2$ of $S^{n-1}$ such that $f(v_1)=f(v_2)$.  
\item For any function $f:S^{n-1}\to \mR^{\lfloor{\frac{n}{2}}\rfloor-2}$, there exist mutually orthogonal elements $v_1,v_2, v_3, v_4$ of $S^{n-1}$ such that $f(v_1)=f(v_2)$ and $f(v_3)=f(v_4)$. 
\item For any function $f: S^{n-1} \to \mR ^{\lfloor\frac{n-r_n-1}{3}\rfloor}$, there exist mutually orthogonal elements 
$v_1,v_2,v_3,v_4,v_5, v_6$ of \(S^{n-1}\) such that $f(v_1)=f(v_2)$, $f(v_3)=f(v_4)$, $f(v_5)=f(v_6)$,  where 
\[r_n=\begin{cases} 5 &\mbox{ if } n \equiv 1, 2 \pmod{4},\\
 4 &\mbox{ if } n \equiv 0 \pmod{4}, \\
 3 &\mbox{ if } n \equiv 3 \pmod{4}. \end{cases} \] 
\end{enumerate}
\end{thmd}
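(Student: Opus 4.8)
The plan is to deduce all three statements from a single Borsuk--Ulam type principle for the flip action: the non-existence of an $H$-equivariant map from $V_{n,2k}$ (with the flip action of Definition \ref{d1.3}) to a sphere carrying the antipodal $C_2$-action. I would argue by contraposition. Identify $V_{n,2k}$ with the space of orthonormal $2k$-frames via Gram--Schmidt, and given $f\colon S^{n-1}\to \mR^m$ define
\[
g\colon V_{n,2k}\longrightarrow (\mR^m)^k=\mR^{mk},\qquad
g(v_1,\dots,v_{2k})=\bigl(f(v_1)-f(v_2),\,f(v_3)-f(v_4),\,\dots,\,f(v_{2k-1})-f(v_{2k})\bigr).
\]
Since the generator $a$ of $H$ swaps $v_{2i-1}$ and $v_{2i}$ for each $i$, we have $g(a\cdot x)=-g(x)$, so $g$ is $H$-equivariant with the sign action on the target. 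If $f$ admitted no tuple of mutually orthogonal vectors $v_1,\dots,v_{2k}$ with $f(v_{2i-1})=f(v_{2i})$ for all $i$, then $g$ would be nowhere zero and $g/\lVert g\rVert\colon V_{n,2k}\to S^{mk-1}$ would be an $H$-equivariant map to the antipodal sphere. Thus it suffices to rule out such an equivariant map for the stated values of $m$.

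Next I would convert this into a cohomological height statement. Because the flip action is free, the projection $V_{n,2k}\to FV_{n,2k}$ is a double cover, classified by a map $FV_{n,2k}\to \mRP^\infty$; let $w\in H^1(FV_{n,2k};\mZ/2)$ be the pullback of the generator. An $H$-equivariant map $V_{n,2k}\to S^{mk-1}$ descends to a map $FV_{n,2k}\to \mRP^{mk-1}$ pulling the generator back to $w$, and since the generator of $H^*(\mRP^{mk-1};\mZ/2)$ has vanishing $(mk)$-th power, such a map forces $w^{mk}=0$. Consequently, if
\[
w^{\,mk}\neq 0\quad\text{in}\quad H^*(FV_{n,2k};\mZ/2),
\]
then $g$ must vanish somewhere and the desired orthogonal configuration exists. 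This reduces Theorem \ref{td} to lower bounds on the height of $w$, the largest power of $w$ that survives.

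Finally I would feed in the cohomology computations of Section \ref{scohomology}. For part (1) take $k=1$ and $m=n-2$, so the claim is $w^{\,n-2}\neq 0$ in $H^*(FV_{n,2};\mZ/2)$; for part (2) take $k=2$ and $m=\lfloor n/2\rfloor-2$, requiring $w^{\,2\lfloor n/2\rfloor-4}\neq 0$ in $H^*(FV_{n,4};\mZ/2)$; and for part (3) take $k=3$ and $m=\lfloor (n-r_n-1)/3\rfloor$, requiring $w^{\,3m}\neq 0$ in $H^*(FV_{n,6};\mZ/2)$. In each case the exponent $mk$ does not exceed the height of $w$ in the relevant ring, and the stated value of $m$ is the largest one for which the inequality $mk\le\mathrm{height}(w)$ persists; the three formulas are thus bookkeeping around this height.

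The main obstacle is precisely the determination of this height. This is where the explicit ring structure and the Stiefel--Whitney class computations are essential: one must track how $w$ interacts with the higher-degree generators and detect the first relation that annihilates a power of $w$. The delicate case is $k=3$, where the height depends on $n\bmod 4$ through the mod $2$ behaviour of the relevant binomial coefficients, producing the correction term $r_n$. Sorting out these congruence cases, rather than the equivariant reduction itself, is the crux of the argument.
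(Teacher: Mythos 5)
Your proposal is correct and takes essentially the same route as the paper: the same equivariant map $\hat f(v_1,\ldots,v_{2k})=(f(v_1)-f(v_2),\ldots,f(v_{2k-1})-f(v_{2k}))$, the same normalization and descent to a map $FV_{n,2k}\to \mRP^{km-1}$ forcing $x^{km}=0$, and the same contradiction with the non-vanishing of powers of $x$. The height you defer as ``bookkeeping'' is exactly $N_F(n,2k)-1$ from Theorem \ref{ind}, and the values $N_F(n,2)=n-1$, $N_F(n,4)$, and $N_F(n,6)$ (the latter giving the case split $r_n$ via the parity of $\binom{j+2}{2}$) are the direct calculations listed in Section \ref{sequivariant}, which is precisely what the paper's proof tabulates to finish.
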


\begin{thme}
\begin{enumerate}
\item Given $n-2$ convex compact regions in $\mR^n$, there is an orthogonal decomposition $\mR^n=\mR^2 \oplus \mR^{n-2}$ such that each region is inscribed in  $S \times \mR^{n-2}$ where $S$ is a  square. 
\item Given $\lfloor{\frac{n}{2}}\rfloor-2$ convex compact regions in $\mR^n$, there is an orthogonal decomposition $\mR^n=\mR^2 \oplus \mR^2 \oplus \mR^{n-4}$ such that each region is inscribed in $S \times S \times \mR^{n-4}$ where $S$ is a  square.
\item Given $\lfloor\frac{n-r_n-1}{3}\rfloor$ convex compact regions in $\mR^n$ (with $r_n$ as defined above), there is an orthogonal decomposition $\mR^n=\mR^2 \oplus \mR^2\oplus \mR^2 \oplus \mR^{n-6}$ such that each region is inscribed in $S \times S \times S \times \mR^{n-6}$ where $S$ is a  square. 
\end{enumerate}
\end{thme}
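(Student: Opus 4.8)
The plan is to obtain each part of the statement as the geometric ``circumscription'' dual of the corresponding part of Theorem~D; the bridge between the two is the classical \emph{width} (or breadth) function of a convex body. For a compact convex region $K \subseteq \mR^n$, let $h_K(v) = \max_{x \in K} \langle x, v \rangle$ denote its support function and set
\[
w_K(v) = h_K(v) + h_K(-v) = \max_{x \in K} \langle x, v \rangle - \min_{x \in K} \langle x, v \rangle,
\]
the width of $K$ in the direction $v \in S^{n-1}$. Since $h_K$ is Lipschitz on $S^{n-1}$ for compact $K$, each $w_K$ is continuous, so it is a legitimate input for Theorem~D. The geometric observation I would isolate first is the following: if $v_1, v_2$ are orthonormal and $P = \mathrm{span}(v_1, v_2)$, then $K$ is inscribed in a slab $S \times P^{\perp}$ with $S \subseteq P$ a square having sides orthogonal to $v_1$ and $v_2$ precisely when $w_K(v_1) = w_K(v_2)$; indeed, since $\langle x, v_j \rangle$ depends only on the $P$-component of $x$, the tightest box in $P$ circumscribing the projection of $K$ has side lengths $w_K(v_1)$ and $w_K(v_2)$, and it is a square exactly when these coincide.

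With this dictionary in place, part (1) goes as follows. Given $n-2$ compact convex regions $K_1, \dots, K_{n-2} \subseteq \mR^n$, I would assemble the widths into a single continuous map
\[
f \colon S^{n-1} \longrightarrow \mR^{n-2}, \qquad f(v) = \big(w_{K_1}(v), \dots, w_{K_{n-2}}(v)\big).
\]
By Theorem~D(1) there are orthogonal unit vectors $v_1, v_2$ with $f(v_1) = f(v_2)$, that is, $w_{K_i}(v_1) = w_{K_i}(v_2)$ for every $i$. Taking $\mR^n = P \oplus P^{\perp}$ with $P = \mathrm{span}(v_1, v_2) \cong \mR^2$, the observation above shows that each $K_i$ is inscribed in $S_i \times \mR^{n-2}$ for a square $S_i \subseteq P$, which is exactly the asserted conclusion.

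Parts (2) and (3) are handled identically, invoking Theorem~D(2) and Theorem~D(3) in place of Theorem~D(1). For part (2) I would feed the $\lfloor n/2 \rfloor - 2$ regions into a map $f \colon S^{n-1} \to \mR^{\lfloor n/2 \rfloor - 2}$ and extract mutually orthogonal $v_1, v_2, v_3, v_4$ with $f(v_1) = f(v_2)$ and $f(v_3) = f(v_4)$; splitting off the two orthogonal planes $\mathrm{span}(v_1, v_2)$ and $\mathrm{span}(v_3, v_4)$ then gives, for each region, a circumscribing square in each plane (of a priori different side lengths, so that $S \times S$ is read as a product of two squares) together with the decomposition $\mR^2 \oplus \mR^2 \oplus \mR^{n-4}$. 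Part (3) is the same with three mutually orthogonal planes coming from the six vectors produced by Theorem~D(3).

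Essentially all of the genuine content has been spent in proving Theorem~D, so the step that actually needs care is the translation rather than any new topology. The one point I would be careful to nail down is the geometric lemma of the first paragraph: that ``inscribed in a square'' is the correct reading of the equality $w_K(v_1) = w_K(v_2)$ (the nontrivial direction being that a tight circumscribing box is a square if and only if its two widths agree), and that orthogonal projection onto $P$ leaves the widths measured along $v_1, v_2 \in P$ unchanged. Once this equivalence is fixed, each part of the theorem is a direct substitution into the corresponding part of Theorem~D, and no further estimates are required.
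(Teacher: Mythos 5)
Your proposal is correct and takes essentially the same route as the paper: the paper also converts each convex region into its width function (the distance between bounding hyperplanes perpendicular to a direction $v$), assembles these into the map fed to Theorem D, and reads off the circumscribing squares from the equality of widths in two orthogonal directions. Your write-up simply makes explicit the support-function formulation, the continuity of the widths, and the ``equal widths $\Leftrightarrow$ inscribed in a square'' dictionary that the paper states in one sentence.
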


\section{Tangent Bundle}\label{stangent}

\noindent Our plan is to study the topology of the flip Stiefel manifold. We begin by examining its parallelizability. For computation of the tangent bundle $TFV_{n,2k},$ we use the real flag manifold which is defined as follows.

\begin{definition}
 Let $n_1, n_2, \ldots, n_s$ be positive integers such that $n_1+ n_2+ \cdots+ n_s= n.$ By an "$(n_1, n_2, \ldots, n_s)$-flag over $\mathbb{R}$", we mean a collection $\sigma$ of mutually orthogonal subspaces $(\sigma_1, \sigma_2, \ldots, \sigma_s)$ of $\mR^n$ such that  $\text{dim}_{\mathbb{R}}\sigma_i= n_i.$ The space of all such flags is a compact manifold. Such a manifold is denoted by $G_{\mathbb{R}}(n_1, n_2, \ldots, n_s).$ 
 \end{definition}
 
Note that $G_{\mathbb{R}}(n_1, n_2)$ is the Grassmanian manifold of $n_1$ planes in the Euclidean $n_1+ n_2$ space.
Using similar arguments as earlier, we can identify $G_{\mathbb{R}}(n_1, n_2, \ldots, n_s)$ with $O(n)/\big(O(n_1)\times O(n_2)\times \cdots \times O(n_s)\big).$\\

\noindent We use information about the tangent bundle of the flag manifold  to compute the tangent bundle of $FV_{n,2k}$. Consider the map $f \colon FV_{n,2k} \to G_\mathbb{R}(2,2,\ldots,2,n- 2k)$ given by
\[
f([(v_1, v_2, \ldots, v_{2k})])= (\mathbb{R}v_1 \oplus \mathbb{R}v_2, \ldots, \mathbb{R}v_{2k-1} \oplus \mathbb{R}v_{2k}, \langle v_1, v_2, \ldots, v_{2k}\rangle^{\perp}).
\]

The map $f$ is a fiber bundle with fiber $(\underbrace{O(2) \times \cdots \times O(2)}_{k~ \text{copies}})/C_2$. Hence, we have
\[
TFV_{n,2k}\cong f^\ast TG_\mathbb{R}(2, 2, \ldots, 2, n- 2k) \oplus \alpha,
\]
where $\alpha$ is the bundle along the fiber and \(f^\ast TG_\mathbb{R}(2,2,\ldots,2,n- 2k) \) denotes the pull back bundle of the tangent bundle on the flag manifold $G_\mathbb{R}(2, 2, \ldots, 2, n- 2k)$. 

\begin{conv}
We use the symbol $\cong$ to denote isomorphism as real vector bundles throughout the text. 
\end{conv}

\noindent We elaborate on a significant property of the fiber of \(f: FV_{n,2k} \to G_{\mR}(2,2,\ldots, n-2k)\) that aids in acquiring an explicit description of $\alpha.$

\begin{lemma}
The fiber $(O(2))^k/C_2$ is homeomorphic to the Lie group $SO(2) \times (O(2))^{k-1},$ where $C_2$ is as described in Lemma \ref{l1.4}. 
\end{lemma}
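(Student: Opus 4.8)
The plan is to identify the $C_2$-action on the fiber explicitly and then straighten it out by a change of coordinates. Recall that the generator $a$ of $C_2$ acts on $V_{n,2k}$ by interchanging $v_{2i-1}$ and $v_{2i}$ within each consecutive pair. Restricting to the fiber of the bundle projection over a fixed flag, a point is recorded by a tuple $(A_1,\ldots,A_k)\in (O(2))^k$, where $A_i$ is the orthonormal $2$-frame in the $i$-th $2$-plane written in a fixed basis; the interchange then simply swaps the two columns of each $A_i$. Thus the $C_2$-action on $(O(2))^k$ is the diagonal right multiplication
\[
(A_1,\ldots,A_k)\longmapsto (A_1 f,\ldots,A_k f),
\]
where $f$ is the reflection matrix defined above. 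First I would verify this identification carefully, since it is the only place where the geometry of the flip enters; everything afterwards is a computation inside the Lie group $(O(2))^k$.

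Next I would decouple the factors. Consider the self-map
\[
\Psi(A_1,A_2,\ldots,A_k) = (A_1,\; A_2 A_1^{-1},\; \ldots,\; A_k A_1^{-1})
\]
of $(O(2))^k$; it is a diffeomorphism, with inverse $(B_1,\ldots,B_k)\mapsto (B_1, B_2 B_1,\ldots,B_k B_1)$. Because $f^2=I$, each coordinate $A_i A_1^{-1}$ with $i\geq 2$ is fixed by the action while $A_1\mapsto A_1 f$; hence $\Psi$ conjugates the diagonal action into
\[
(B_1,B_2,\ldots,B_k)\longmapsto (B_1 f, B_2,\ldots,B_k),
\]
which is right multiplication by $f$ on the first factor alone and trivial on the rest. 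Being an equivariant homeomorphism, $\Psi$ descends to a homeomorphism of quotients, so that
\[
(O(2))^k/C_2 \;\cong\; \bigl(O(2)/\langle f\rangle\bigr)\times (O(2))^{k-1},
\]
where $\langle f\rangle$ acts by right multiplication.

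It then remains to identify $O(2)/\langle f\rangle$. Since $\det f=-1$, the matrix $f$ lies in the non-identity component, and right multiplication by $f$ interchanges the two components $SO(2)$ and $SO(2)f$ of $O(2)$; consequently every orbit $\{A, Af\}$ meets $SO(2)$ in exactly one point. The induced map $SO(2)\to O(2)/\langle f\rangle$ is therefore a continuous bijection from a compact space to a Hausdorff space, hence a homeomorphism, giving $O(2)/\langle f\rangle\cong SO(2)$ and the claimed description $SO(2)\times (O(2))^{k-1}$. I do not expect a genuine obstacle; the main point requiring care is the correct identification of the action in the first step, and one subtlety worth flagging is that $\langle F\rangle$ is not normal in $(O(2))^k$ (the reflection $f$ is not central in $O(2)$), so the quotient is only a homogeneous space and the homeomorphism should not be expected to respect group structure, consistent with the statement, which invokes $SO(2)\times(O(2))^{k-1}$ merely as a space.
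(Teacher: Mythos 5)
Your proof is correct, and it arrives at the same cross-section $SO(2)\times (O(2))^{k-1}$ as the paper, but by a somewhat different mechanism, so a comparison is worthwhile. Both arguments rest on the same key observation: the $C_2$-action on the fiber is simultaneous right multiplication by $f$ in each $2\times 2$ block, and since $\det f=-1$ this reverses the sign of $\det A_1$, so every orbit contains exactly one tuple whose first block lies in $SO(2)$. The paper stops essentially there: it defines the identification by sending an orbit to that normalized representative, $[(A_1,\ldots,A_k)]\mapsto (A_1,A_2,\ldots,A_k)$ with $\det A_1>0$, i.e.\ it exhibits $SO(2)\times(O(2))^{k-1}\subset (O(2))^k$ as a slice meeting each orbit once, and declares the homeomorphism verification ``routine'' (it follows from the fact that the orbit map restricted to this compact slice is a continuous bijection onto the Hausdorff quotient). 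Your additional ingredient is the untwisting diffeomorphism $\Psi(A_1,\ldots,A_k)=(A_1,A_2A_1^{-1},\ldots,A_kA_1^{-1})$, which intertwines the diagonal action with the action by $f$ on the first coordinate alone; this reduces the problem to the case $k=1$ and makes the point-set topology automatic, since an equivariant homeomorphism descends to a homeomorphism of quotients and $SO(2)\to O(2)/\langle f\rangle$ is a continuous bijection from a compact space to a Hausdorff space. In short, your route spends one extra change of variables to purchase exactly the continuity argument that the paper leaves implicit. Two minor points: the invariance of $A_iA_1^{-1}$ for $i\geq 2$ requires only that $f$ be invertible, not that $f^2=I$; and your closing remark that $\langle F\rangle$ is not normal in $(O(2))^k$, so the quotient is only a homogeneous space and the homeomorphism cannot be expected to respect group structure, is accurate and consistent with the lemma asserting a homeomorphism rather than an isomorphism of Lie groups.
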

\begin{proof}
\noindent We describe the identification between the two sets here. 

An arbitrary element in this fiber consists of a set of diagonally aligned block matrices of $O(2)$ with the simultaneous action of $C_2$ on all of these matrices. Hence it is of the form $\diag(A_1, A_2, \cdots, A_k)$ where $A_i = \left\{\begin{pmatrix}
a_i & b_i  \\
c_i & d_i \\
\end{pmatrix}, \begin{pmatrix}
b_i & a_i  \\
d_i & c_i \\
\end{pmatrix} \right\}
$ for every $i$. By fixing the element with a positive determinant of the pair in the first block $A_1,$ we have a unique identification of $(O(2))^k/C_2$ with  $SO(2) \times (O(2))^{k-1}.$

Verifying that the identification described above is a homeomorphism is a routine task.
\end{proof}

\begin{corollary}
(1) The fiber bundle  $(O(2))^k/C_2 \rightarrow FV_{n,2k} \xrightarrow{f} G_\mathbb{R}(2,\ldots,2,n- 2k) $,  is a principal bundle.\\
(2) The bundle along the fiber of the map $f$ is trivial. That is, $\alpha \cong k \epsilon_{\mathbb{R}}$, where $\epsilon_{\mathbb{R}}$ denotes the trivial line bundle.
\end{corollary}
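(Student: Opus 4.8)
The plan is to deduce both assertions from the principal bundle structure of the Stiefel manifold over the flag manifold. First I would record that the projection $\tilde f\colon V_{n,2k}\to G_{\mR}(2,\ldots,2,n-2k)$ sending a frame to its associated flag is a principal $(O(2))^k$-bundle: writing $V_{n,2k}=O(n)/O(n-2k)$ and $G_{\mR}(2,\ldots,2,n-2k)=O(n)/\big((O(2))^k\times O(n-2k)\big)$, the subgroup $O(n-2k)$ is normal in $(O(2))^k\times O(n-2k)$, so the fibre is $(O(2))^k$ and the structure group acts on the right by block-diagonal multiplication $V\mapsto V\,\diag(R_1,\ldots,R_k)$, $R_i\in O(2)$, which preserves each plane $\mR v_{2i-1}\oplus\mR v_{2i}$ and hence each flag. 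The flip $C_2=\{I,F\}$ of Definition \ref{d1.3} is precisely the subgroup $\langle F\rangle\le(O(2))^k$ of this structure group, and $FV_{n,2k}=V_{n,2k}/C_2$, so that $f$ is the induced map on the quotient and its fibre is $(O(2))^k/C_2$.

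For part (1) the goal is to upgrade $f$ to a principal bundle with structure group the Lie group $L:=SO(2)\times(O(2))^{k-1}$ furnished by the previous lemma. Here I would exploit the decomposition $(O(2))^k=\tilde H\rtimes C_2$, where $\tilde H=SO(2)\times(O(2))^{k-1}$ is the normal subgroup of tuples whose first block has determinant $+1$; since $\tilde H\cap C_2=\{I\}$ and $\tilde H\cdot C_2=(O(2))^k$, choosing in each coset the representative whose first block lies in $SO(2)$ identifies $(O(2))^k/C_2$ with $\tilde H\cong L$, recovering the homeomorphism of the preceding lemma. Using this complement I would define a fibrewise $L$-action on $FV_{n,2k}$ by letting $h\in\tilde H$ act on a class $[V]$ through the $\tilde H$-representative of its orbit, and then verify that this action is well defined on $C_2$-orbits, free, and transitive on each fibre of $f$, with orbit space $G_{\mR}(2,\ldots,2,n-2k)$; establishing these three properties exhibits $f$ as a principal $L$-bundle.

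The main obstacle is exactly this descent: because $F=\diag(f,\ldots,f)$ is not central in $(O(2))^k$ (indeed $\mathrm{Ad}_F=-\mathrm{id}$ on the Lie algebra of $(O(2))^k$), the naive right translation $[V]\mapsto[V\,\diag(h_1,\ldots,h_k)]$ need not respect the relation $V\sim VF$, so the compatibility of the $\tilde H$-action with the flip must be checked carefully using the semidirect-product relations $FhF^{-1}\in\tilde H$ for $h\in\tilde H$. I expect this verification, rather than any homogeneous-space bookkeeping, to be the crux of part (1).

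Granting part (1), part (2) is formal. For a principal $L$-bundle the bundle along the fibre $\alpha=\ker df$ is canonically trivialised by the fundamental vector fields of a basis of $\mathrm{Lie}(L)$: the map $FV_{n,2k}\times\mathrm{Lie}(L)\to\alpha$, $([V],X)\mapsto\frac{d}{dt}\big|_{0}[V]\cdot\exp(tX)$, is a bundle isomorphism. Since $\dim L=\dim SO(2)+(k-1)\dim O(2)=1+(k-1)=k$, this yields $\alpha\cong k\,\epsilon_{\mR}$, completing the proof.
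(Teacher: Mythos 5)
Your argument for part (2) coincides with the paper's (the fundamental vector fields of a principal action trivialize $\ker Df$), and your identification of $V_{n,2k}\to G_{\mR}(2,\ldots,2,n-2k)$ as a principal $(O(2))^k$-bundle with the flip group $C_2=\{I_{2k\times 2k},F\}$ sitting inside the structure group is also the paper's setup; for part (1) the paper merely cites Kobayashi--Nomizu. The genuine gap in your proposal sits exactly at the step you postpone: the $\tilde H$-action does not descend to $C_2$-orbits, and the verification you promise cannot be carried out. Since the $(O(2))^k$-action on $V_{n,2k}$ is free, well-definedness of $[V]\cdot h=[Vh]$ forces $Fh=hF$, which fails whenever the first block of $h$ is a rotation $R_\theta$ with $\theta\neq 0,\pi$ (as $fR_\theta f^{-1}=R_{-\theta}$). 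The fallback of acting ``through the $\tilde H$-representative of the orbit'' is not intrinsic: matching an orbit with an element of $\tilde H$ requires choosing a reference frame $v_0$ in the fibre, and replacing $v_0$ by $v_0F$ twists the resulting action by the automorphism $h\mapsto FhF^{-1}$ of $\tilde H$, so the fibrewise actions do not glue; they would glue only if the structure group of $V_{n,2k}\to G_{\mR}(2,\ldots,2,n-2k)$ reduced from $(O(2))^k$ to $\tilde H$, which is obstructed because the tautological plane bundles over the flag manifold are non-orientable.

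In fact your own observation that $\mathrm{Ad}_F=-\mathrm{id}$ on $\mathrm{Lie}\big((O(2))^k\big)$ shows the statement itself fails, so no repair is possible. The flip carries the fundamental vector field of $X\in\mathfrak{so}(2)^k$ at $v$ to that of $\mathrm{Ad}_FX=-X$ at $vF$; hence the bundle along the fibre is $\alpha\cong V_{n,2k}\times_{C_2}\big(\mathfrak{so}(2)^k\big)\cong k\,\xi_{n,2k}$, not $k\epsilon_{\mR}$. For $2k<n$ the line bundle $\xi_{n,2k}$ is nontrivial (corollary to Theorem \ref{t3.5}), and for $k$ odd one even has $w_1(\alpha)=kx\neq 0$; since every principal bundle has trivial vertical bundle --- which is precisely your (and the paper's) part (2) argument --- the fibration $f$ admits no principal structure in these cases. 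Concretely, $FV_{n,2}\to G_2(\mR^n)$ is a non-orientable circle bundle for $n\geq 3$. This also means the corollary as stated, and the paper's one-line proof of (1), are in error: $C_2$ is not normal in $(O(2))^k$ (conjugate $F$ by a rotation in a single block), so the quoted result of Kobayashi--Nomizu yields only a fibre bundle with fibre $(O(2))^k/C_2$, not a principal one. A sanity check that $\alpha\cong k\,\xi_{n,2k}$ is the correct replacement: $FV_{3,2}\cong SO(3)/C_2$ is parallelizable, which is consistent with $\alpha\cong\xi_{3,2}$, whereas $\alpha\cong\epsilon_{\mR}$ propagates through Section \ref{sparallelizability} to $w_1(TFV_{3,2})=x\neq 0$, contradicting the paper's own Lemma on quotients of Lie groups.
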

\begin{proof}
{{}
Part (1) of the Corollary is an immediate consequence of \cite[Page 50]{kn}. We briefly discuss proof for (2) here. Using \cite[page 35, Problem 3-A]{ms} we know that for a principal bundle $G \to E \xrightarrow{f} B$, we have
\[
\tau E \cong \alpha \oplus f^\ast \tau B
\]
where $\alpha,$ called the bundle along the fiber, is built up from the kernels of $Df(x): T_xE \to T_{f(x)}B.$ 
We have 
\[
exp: \mathfrak{g}= T_eG \to G
\]
Hence we get
\[
A: E \times T_eG \xrightarrow{(id, exp)} E \times G \xrightarrow{action} E.
\]
Taking derivative, we write 
\[ L : E \times T_e G \to TE, ~~L(x,g)= DA(x,0)(g).\]
Clearly the image of $L$ lands in $T_xE$, and as the map $E\times T_e G \xrightarrow{A} E \to B$ equals $f$, the image of $L$ also lands in the kernel of $Df$. It follows that $L$ is an isomorphism between $E\times T_e G $ and the Kernel of $Df$.   
}
\end{proof}

From the well-noted work of Lam (see \cite[Page 306]{l}), we know that
\[
TG_\mathbb{R}(n_1,n_2,\ldots,n_s) \cong \bigg(\underset{1\leq i< j\leq s}{\bigoplus}  \xi_i \otimes \xi_j \bigg)
\]
where $\xi_i$ is the real vector bundle of rank \(n_i,\) for \(1 \leq i \leq s,\) whose fiber at the point \(\sigma= (\sigma_1, \sigma_2, \ldots, \sigma_s)\) is the vector space \(\sigma_i.\) It is also noted that there is an isomorphism

\begin{equation}\label{esumxi}
\xi_1 \oplus \xi_2 \oplus \cdots \oplus \xi_s \cong n\epsilon_\mR.
\end{equation}

\noindent We see that
\[
TFV_{n,2k} \cong \bigg(\underset{1\leq i< j\leq k+1}{\bigoplus} f^\ast \xi_i \otimes f^\ast \xi_j \bigg) \oplus  k\epsilon_{\mR}.
\]
\noindent {{}Consider a (real) plane bundle \(\zeta_{n,2k}\) over $FV_{n,k},$ with the total space
\[
E[\zeta_{n,2k}]= \{ [v, (s,t)]: v \in V_{n,2k}; s, t\in \mR ~\text{with}~ (v, (s,t))\sim (\tilde{v}, (t,s))\},
\]
where \(\tilde{v}= (v_2, v_1, \ldots, v_{2k}, v_{2k-1})\) when \(v= (v_1, v_2, \ldots, v_{2k}).\) Then, by using the definition of the induced bundle, we see that \(f^\ast \xi_i\) can be identified with $\zeta_{n,2k}$ for $1 \leq i \leq k$. Let \(\gamma_{n,2k}:=f^\ast\xi_{k+1}.\)} Substituting this, we get

\begin{lemma}
The tangent bundle $TFV_{n,2k}$ is given  by
\begin{equation}\label{tgtflip}
TFV_{n,2k}\cong \frac{k(k-1)}{2}(\zeta_{n,2k}\otimes \zeta_{n,2k}) \oplus k(\zeta_{n,2k}\otimes \gamma_{n,2k}) \oplus  k\epsilon_{\mR}.
\end{equation}
\end{lemma}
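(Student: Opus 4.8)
The plan is to assemble the formula from the structural results already established, without any genuinely new computation. First I would invoke the tangent bundle splitting for the principal fiber bundle $f\colon FV_{n,2k} \to G_\mathbb{R}(2,\ldots,2,n-2k)$ proved above, namely $TFV_{n,2k} \cong f^\ast TG_\mathbb{R}(2,\ldots,2,n-2k) \oplus \alpha$, and then replace $\alpha$ by $k\epsilon_\mR$ using part (2) of the Corollary, which identifies the bundle along the fiber as the trivial bundle of rank $k$.

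Next I would substitute Lam's description of the tangent bundle of the flag manifold. Since the flag type here is $(2,\ldots,2,n-2k)$, with $k$ blocks of size $2$ followed by one block of size $n-2k$, the number of blocks $s$ appearing in Lam's formula equals $k+1$, so that
\[
f^\ast TG_\mathbb{R}(2,\ldots,2,n-2k) \cong \bigoplus_{1\leq i<j\leq k+1} f^\ast\xi_i \otimes f^\ast\xi_j,
\]
using that pullback commutes with direct sums and tensor products.

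The remaining step is combinatorial bookkeeping based on the two identifications recorded just before the statement: $f^\ast\xi_i \cong \zeta_{n,2k}$ for each $1 \leq i \leq k$, and $\gamma_{n,2k} := f^\ast\xi_{k+1}$. I would split the index pairs $(i,j)$ with $1\leq i<j\leq k+1$ according to whether $j \leq k$ or $j = k+1$. The pairs of the first type number $\binom{k}{2} = \tfrac{k(k-1)}{2}$, and each contributes a copy of $\zeta_{n,2k}\otimes\zeta_{n,2k}$; the pairs of the second type number $k$, and each contributes a copy of $\zeta_{n,2k}\otimes\gamma_{n,2k}$. Together with the trivial summand $k\epsilon_\mR$ this yields the asserted formula. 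As a consistency check, the ranks add to $2k(k-1)+2k(n-2k)+k = k(2n-2k-1)$, matching the dimension of $FV_{n,2k}$ recorded in the Remark.

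Since the genuinely delicate input is the isomorphism $f^\ast\xi_i \cong \zeta_{n,2k}$ holding simultaneously for all $i \leq k$ — reflecting how the flip action glues the individual $2$-plane bundles into the single bundle $\zeta_{n,2k}$ — and this has already been supplied, I expect the only care required in the proof itself to be getting the multiplicities right. There is no deeper obstacle beyond accurate counting.
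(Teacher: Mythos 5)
Your proposal is correct and follows essentially the same route as the paper: both start from the splitting $TFV_{n,2k}\cong f^\ast TG_\mathbb{R}(2,\ldots,2,n-2k)\oplus k\epsilon_{\mR}$, substitute Lam's formula together with the identifications $f^\ast\xi_i\cong\zeta_{n,2k}$ ($i\leq k$) and $\gamma_{n,2k}=f^\ast\xi_{k+1}$, and then count the index pairs, your $\binom{k}{2}$ being exactly the paper's $\sum_{J=1}^{k-1}J=\frac{k(k-1)}{2}$. The rank consistency check is a nice addition but not needed.
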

\begin{proof}
\begin{align*}
TFV_{n,2k} &\cong \bigg(\underset{1\leq i< j\leq k}{\bigoplus} \zeta_{n,2k}\otimes \zeta_{n,2k}\bigg) \oplus \bigg(\bigoplus\limits_{i=1}^{k}\zeta_{n,2k}\otimes \gamma_{n,2k}\bigg) \oplus  k\epsilon_{\mR}\\
&\cong \bigg(\underset{1\leq i< j\leq k}\sum 1\bigg) (\zeta_{n,2k}\otimes \zeta_{n,2k}) \oplus k(\zeta_{n,2k}\otimes \gamma_{n,2k}) \oplus  k\epsilon_{\mR}\\
&\cong \bigg(\sum\limits_{j=2}^k (j-1)\bigg) (\zeta_{n,2k}\otimes \zeta_{n,2k}) \oplus k(\zeta_{n,2k}\otimes \gamma_{n,2k}) \oplus  k\epsilon_{\mR}\\
&\cong \bigg(\sum\limits_{J=1}^{k-1} J\bigg) (\zeta_{n,2k}\otimes \zeta_{n,2k}) \oplus k(\zeta_{n,2k}\otimes \gamma_{n,2k}) \oplus  k\epsilon_{\mR}
\end{align*}

which proves Equation (\ref{tgtflip}).
\end{proof}

\begin{lemma}
The real plane bundle $\zeta_{n,2k}$ is stably isomorphic to the line bundle $\xi_{n,2k}.$ That is,
\[
\zeta_{n,2k} \cong \epsilon_\mR \oplus \xi_{n,2k}
\]
where \(\xi_{n,2k}\) is the associated real line bundle to the double cover \( V_{n,2k} \to FV_{n,2k}\). 
\end{lemma}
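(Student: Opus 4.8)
The plan is to recognize $\zeta_{n,2k}$ as the vector bundle associated to the principal $C_2$-bundle $V_{n,2k} \to FV_{n,2k}$ via a $2$-dimensional representation of $C_2$, and then to split that representation into its $\pm 1$ eigenspaces. Reading off the given total space, $\zeta_{n,2k}$ is exactly the associated bundle $V_{n,2k} \times_{C_2} \mR^2$, where the nontrivial element $a \in C_2$ acts on $\mR^2$ by the swap $(s,t) \mapsto (t,s)$, that is, by the block matrix $f = \bigl(\begin{smallmatrix} 0 & 1 \\ 1 & 0 \end{smallmatrix}\bigr)$. Indeed, the stated equivalence $(v,(s,t)) \sim (\tilde{v},(t,s))$ is precisely the diagonal $C_2$-action on $V_{n,2k} \times \mR^2$, so $E[\zeta_{n,2k}] = (V_{n,2k} \times \mR^2)/C_2$.

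Next I would decompose this representation. The swap matrix $f$ is symmetric with eigenvalues $+1$ and $-1$, with eigenvectors $(1,1)$ and $(1,-1)$ respectively. Hence, as a real $C_2$-representation, $\mR^2 \cong \mR_{\mathrm{triv}} \oplus \mR_{\mathrm{sgn}}$, where $\mR_{\mathrm{triv}}$ is the trivial representation (the diagonal $\{(s,s)\}$) and $\mR_{\mathrm{sgn}}$ is the sign representation (the anti-diagonal $\{(s,-s)\}$, on which $a$ acts by $-1$). Equivalently, the linear change of coordinates $(s,t) \mapsto (s+t,\, s-t)$ conjugates $f$ to $\diag(1,-1)$; transporting the $C_2$-action through this isomorphism shows it intertwines the swap action with the action $\diag(1,-1)$, so it is $C_2$-equivariant.

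Since the associated-bundle construction $V_{n,2k} \times_{C_2} (-)$ commutes with direct sums of representations, the equivariant splitting descends to a bundle splitting $\zeta_{n,2k} \cong (V_{n,2k}\times_{C_2}\mR_{\mathrm{triv}}) \oplus (V_{n,2k}\times_{C_2}\mR_{\mathrm{sgn}})$. The first summand is the trivial line bundle $\epsilon_\mR$, because $C_2$ acts trivially on that fiber and the quotient of $V_{n,2k}\times\mR$ by the diagonal action is just $FV_{n,2k}\times\mR$. The second summand is, by definition, the real line bundle $\xi_{n,2k}$ associated to the double cover $V_{n,2k}\to FV_{n,2k}$ via the sign representation, giving the claimed isomorphism $\zeta_{n,2k}\cong \epsilon_\mR \oplus \xi_{n,2k}$.

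The computation is essentially representation-theoretic and presents no genuine obstacle; the only point meriting a line of verification is that the coordinate change $(s,t)\mapsto(s+t,s-t)$ is compatible with the gluing relation defining $\zeta_{n,2k}$ and therefore descends to a well-defined bundle map on the quotient $FV_{n,2k}$. This follows immediately from the equivariance noted above, and the inverse is furnished by the reverse change of basis, so the induced map is a bundle isomorphism.
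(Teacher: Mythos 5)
Your proof is correct and takes essentially the same route as the paper: both arguments identify $\zeta_{n,2k}$ as the bundle associated to the regular representation of $C_2$ via the principal bundle $V_{n,2k} \to FV_{n,2k}$, decompose that representation as trivial $\oplus$ sign, and push the splitting through the associated-bundle construction. The only difference is presentational --- the paper invokes the homomorphism $R_\mR(C_2) \to K_\mR^\ast(FV_{n,2k})$ and cites the abstract decomposition of the regular representation, whereas you exhibit the equivariant eigenspace change of coordinates $(s,t)\mapsto(s+t,\,s-t)$ and check descent explicitly.
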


\begin{proof}
We make use of the ring homomorphism \(\Phi: R_\mR(C_2) \to K_\mR^\ast(FV_{n,2k})\) where  \( R_\mR(C_2)\) denotes the representation ring of (equivalent) real representations of \(C_2\) and \(K_\mR^\ast(FV_{n,2k})\) denotes the real \(K\) ring of \(FV_{n,2k}\) given by
\[
\rho \colon C_2 \to GL(V) \mapsto \xi_\rho,~\text{where}~ E[\xi_\rho]:= V_{n,2k} \underset{\rho}{\times} V.
\]
(See \cite[Chapter 13, Section 5]{h} for details.) Notice that the image of regular representation is \(\zeta_{n,2k}\) and the image of  direct sum of the trivial and sign representation is \(\epsilon_\mR \oplus \xi_{n,2k}\). 

Using the irreducible decomposition of representations of $C_2,$ {}{(refer \cite[Part 1]{f} for a detailed discussion on representations of finite groups)} we see that the regular representation is isomorphic to the direct sum of the trivial and sign representations of $C_2.$

Now, as the regular representation and the representation obtained by the direct sum of the trivial and sign representation are equivalent representations, images under \(\Phi\) are isomorphic vector bundles. 
\end{proof}

\noindent We end this section with the following lemma which we use later to get information about parallelizability, span and stable span.

\begin{lemma}
The tangent bundle $TFV_{n,2k}$ satisfies a relation given by
\begin{equation}\label{stableiso}
\frac{k(k+1)}{2}(\zeta_{n,2k} \otimes  \zeta_{n,2k})  \oplus TFV_{n,2k} \cong nk\zeta_{n,2k} \oplus k\epsilon_{\mR}.
\end{equation}
\end{lemma}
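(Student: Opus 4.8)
The plan is to reduce the whole statement to a single stable relation obtained by pulling back the splitting $\xi_1 \oplus \cdots \oplus \xi_{k+1} \cong n\epsilon_\mR$ of Equation~(\ref{esumxi}) along the map $f$. Since $f^\ast$ commutes with Whitney sums and fixes trivial bundles, and since $f^\ast\xi_i \cong \zeta_{n,2k}$ for $1 \le i \le k$ while $f^\ast\xi_{k+1} = \gamma_{n,2k}$, this immediately yields the basic identity
\[
k\,\zeta_{n,2k} \oplus \gamma_{n,2k} \cong n\epsilon_\mR,
\]
whose ranks are consistent since $2k + (n-2k) = n$. This is the only input I need beyond the tangent-bundle formula~(\ref{tgtflip}).

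First I would add $\frac{k(k+1)}{2}(\zeta_{n,2k}\otimes\zeta_{n,2k})$ to both sides of~(\ref{tgtflip}). On the left the two coefficients of $\zeta_{n,2k}\otimes\zeta_{n,2k}$ combine, and the arithmetic identity $\frac{k(k+1)}{2} + \frac{k(k-1)}{2} = k^2$ collapses them, giving
\[
\frac{k(k+1)}{2}(\zeta_{n,2k}\otimes\zeta_{n,2k}) \oplus TFV_{n,2k} \cong k^2(\zeta_{n,2k}\otimes\zeta_{n,2k}) \oplus k(\zeta_{n,2k}\otimes\gamma_{n,2k}) \oplus k\epsilon_\mR.
\]
So it remains to identify the first two summands on the right with $nk\,\zeta_{n,2k}$.

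Next I would tensor the basic identity $k\,\zeta_{n,2k}\oplus\gamma_{n,2k}\cong n\epsilon_\mR$ with $\zeta_{n,2k}$. Using that $\otimes$ distributes over $\oplus$ and that $\zeta_{n,2k}\otimes\epsilon_\mR \cong \zeta_{n,2k}$, this produces
\[
k(\zeta_{n,2k}\otimes\zeta_{n,2k}) \oplus (\zeta_{n,2k}\otimes\gamma_{n,2k}) \cong n\,\zeta_{n,2k}.
\]
Taking $k$ copies of this isomorphism (i.e. forming the $k$-fold Whitney sum) gives exactly $k^2(\zeta_{n,2k}\otimes\zeta_{n,2k}) \oplus k(\zeta_{n,2k}\otimes\gamma_{n,2k}) \cong nk\,\zeta_{n,2k}$, and substituting this into the previous display proves~(\ref{stableiso}).

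I do not anticipate a genuine obstacle: the argument is purely formal manipulation of vector-bundle isomorphisms, and the only two facts used — that $f^\ast$ respects sums and trivial bundles, and that tensoring distributes over Whitney sum — are standard. The one place to be careful is the bookkeeping, namely applying the coefficient identity $\frac{k(k+1)}{2}+\frac{k(k-1)}{2}=k^2$ correctly and keeping the rank count in the basic identity consistent, so that no spurious trivial summand is introduced or lost.
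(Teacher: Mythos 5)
Your proposal is correct and follows essentially the same route as the paper: both pull back the splitting $\xi_1 \oplus \cdots \oplus \xi_{k+1} \cong n\epsilon_\mR$ along $f$ to get $k\zeta_{n,2k} \oplus \gamma_{n,2k} \cong n\epsilon_\mR$, tensor with $\zeta_{n,2k}$ and take $k$ copies, and then combine with Equation~(\ref{tgtflip}) via the identity $\frac{k(k+1)}{2}+\frac{k(k-1)}{2}=k^2$. The only difference is cosmetic ordering of the substitution steps.
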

\begin{proof}
From Equation (\ref{esumxi}), we have that
\[
\xi_1 \oplus \xi_2 \oplus \cdots \oplus \xi_s \cong n\epsilon_\mR.
\]
Applying pull backs on both sides, we get
\[
\bigg(\bigoplus_{i=1}^k f^\ast \xi_i \bigg) \oplus f^\ast\xi_{k+1} \cong n\epsilon_\mR.
\]
Using notations defined earlier, we have
\[
\bigg( \bigoplus_{i=1}^k \zeta_{n, 2k} \bigg) \oplus \gamma_{n, 2k} \cong n\epsilon_\mR.~\text{That is,}~ k\zeta_{n, 2k} \oplus \gamma_{n, 2k} \cong n\epsilon_\mR.
\]
Multiplying by $k$ and tensoring with $\zeta_{n,2k}$ on both sides,
\[
k^2(\zeta_{n,2k} \otimes \zeta_{n,2k})  \oplus k(\zeta_{n,2k} \otimes  \gamma_{n, 2k}) \cong nk\zeta_{n,2k}.
\]
Comparing the above with Equation (\ref{tgtflip}), we rewrite it as
\[
\frac{k(k+1)}{2}(\zeta_{n,2k} \otimes  \zeta_{n,2k})  \oplus  \frac{k(k- 1)}{2}(\zeta_{n,2k} \otimes \zeta_{n,2k})  \oplus k(\zeta_{n,2k} \otimes  \gamma_{n, 2k}) \oplus k\epsilon_{\mR} \cong nk\zeta_{n,2k} \oplus k\epsilon_{\mR}.
\]
which is nothing but Equation \((\ref{stableiso})\) and hence the proof is complete.

\end{proof}

For analysing parallelizability, we need information about the cohomology of $FV_{n,2k}.$\\

\section{Cohomology of  \(FV_{n,2k}\)}\label{scohomology}

\noindent Analogical to \cite[Page 236-237]{ss}, we construct a sequence of commutative fibrations, and using Serre spectral sequences, we compute the cohomology of $FV_{n,2k}$  with $\mZ/2\mZ$ coefficients.\\

\noindent From the principal $C_2$-fibration $V_{n,2k} \xrightarrow{\pi} FV_{n,2k}$, we obtain the fibration 
\[
FV_{n,2k} \xrightarrow{\Pi} BC_2 = \mRP^{\infty},
\]
where $\Pi$ is homotopic to the map $EC_2 \underset{C_2}{\times} V_{n,2k} \to BC_2$. 

\noindent {{} Now, we recall a fact that follows from \cite[Page 501]{m}.}
\begin{fact}
Let $X$ be a $G$-space, where $G$ is a discrete group, with group action denoted by $\Phi.$ In the fibration
\[
\xymatrix{
X \ar[r] & EG\underset{G}{\times} X \ar[r] & BG,
}
\]
the action of the fundamental group $\pi_1(BG)= G$ on the fiber $X$ is the same as the $G$-action $\Phi.$
\end{fact}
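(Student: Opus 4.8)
The plan is to compute the fibre transport of the Borel fibration directly, exploiting that for a \emph{discrete} group $G$ the map $EG \to BG$ is an honest covering rather than merely a fibration. First I would record the standing facts. Since $G$ is discrete, $EG$ is contractible, hence simply connected, so $q \colon EG \to BG = EG/G$ is the universal covering and its group of deck transformations is canonically identified with $\pi_1(BG, b_0) \cong G$. I would fix a basepoint $e_0 \in EG$, set $b_0 = q(e_0)$, and recall the defining property of this identification: a class $[\gamma] \in \pi_1(BG, b_0)$ corresponds to the unique $g \in G$ for which the lift $\tilde\gamma$ of $\gamma$ starting at $e_0$ terminates at $g\cdot e_0$. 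I would also fix the homeomorphism of the fibre $\Pi^{-1}(b_0)$ with $X$ given by $x \mapsto [e_0, x]$, where $[e,x]$ denotes the class of $(e,x)$ in $EG \underset{G}{\times} X = (EG \times X)/G$.

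Next I would recall the general definition of fibre transport. For the fibration $\Pi \colon EG \underset{G}{\times} X \to BG$ and a loop $\gamma$ at $b_0$, the monodromy of $[\gamma]$ is the endpoint map of a lift through $\Pi$ of the homotopy $H \colon X \times I \to BG$, $H(x,t) = \gamma(t)$ (constant in $x$, with $I=[0,1]$ the unit interval), starting from the chosen fibre inclusion $x \mapsto [e_0, x]$. The key step is to write such a lift explicitly: I would take $\tilde H(x,t) = [\tilde\gamma(t), x]$. One checks immediately that this is continuous, that $\tilde H(\cdot,0)$ is the chosen inclusion of the fibre, and that $\Pi \circ \tilde H = H$ because $\Pi([\tilde\gamma(t),x]) = q(\tilde\gamma(t)) = \gamma(t)$; hence $\tilde H$ is a bona fide lift.

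It then remains only to evaluate the endpoint. Using $\tilde\gamma(1) = g\cdot e_0$ together with the defining relation $[g\cdot e_0, x] = [e_0, g^{-1}x]$ in the balanced product, I obtain $\tilde H(x,1) = [e_0, g^{-1}x]$; under the identification of the fibre with $X$ this is the self-map $x \mapsto g^{-1}x$ of $X$. Thus the transport attached to $[\gamma] \leftrightarrow g$ is precisely the given $G$-action $\Phi$ (up to the customary inverse dictated by the left/right convention on monodromy), which is exactly the content of the Fact.

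The main point demanding care is bookkeeping rather than a genuine obstacle: whether $\pi_1(BG)$ is taken to act on $X$ on the left or the right, the direction in which loops are transported, and the attendant $g$ versus $g^{-1}$. These are harmless once fixed at the outset, and the substance of the argument—that the explicit lift $[\tilde\gamma(t),x]$ realises the transport—is insensitive to them. I would also remark that this naive lift computes the honest monodromy because the homotopy lifting property guarantees that the endpoint map depends only on $[\gamma]$, so producing any one admissible lift suffices to identify the action.
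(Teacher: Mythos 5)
Your proof is correct, and it necessarily takes a different route from the paper's, because the paper does not prove this Fact at all: it is simply recalled with a citation to \cite[Page 501]{m}. Your argument exploits the discreteness of $G$ in an essential way: $EG \to BG$ is then an honest universal covering, its deck group is canonically $\pi_1(BG)\cong G$ via path lifting, and the Borel fibration $EG\underset{G}{\times}X \to BG$ admits the completely explicit lift $\widetilde H(x,t)=[\widetilde\gamma(t),x]$ of the constant-in-$x$ homotopy $H(x,t)=\gamma(t)$. Evaluating at $t=1$ and using the balanced-product relation $[g\cdot e_0,x]=[e_0,g^{-1}x]$ identifies the monodromy of the class corresponding to $g$ with $x\mapsto g^{-1}x$, i.e.\ with $\Phi$ up to the inverse dictated by the left/right monodromy convention and by the choice of isomorphism $\pi_1(BG)\cong \mathrm{Deck}(EG/BG)$ --- an ambiguity you correctly flag as pure bookkeeping. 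This buys a self-contained, elementary verification where the paper offers only a reference, and the $g$ versus $g^{-1}$ discrepancy is genuinely harmless for the paper's application (triviality of the $\pi_1$-action on $H^\ast(V_{n,2k};\mathbb{Z}/2)$): the set of homeomorphisms realized is the same, so the induced action on cohomology is unchanged, and in the paper's case $G=C_2$, where $g=g^{-1}$ anyway. Your closing appeal to the homotopy lifting property --- that the endpoint map is well defined up to homotopy independently of the chosen lift, so one explicit lift suffices --- is the standard fibration-theoretic fact and is exactly what makes the computation legitimate.
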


From the following proposition, we infer that the fibration $V_{n,2k} \to FV_{n,2k} \xrightarrow{\Pi} BC_2 = \mRP^{\infty}$ satisfies the hypotheses required of the Leray-Serre Spectral Sequence Theorem(See \cite[Page 138, 163]{m}).

\begin{proposition}
$\pi_1(\mRP^{\infty})$ acts trivially on $H^\ast(V_{n,2k}; \mZ/2\mZ).$
\end{proposition}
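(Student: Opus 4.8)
The plan is to use the Fact recalled just above the statement: for the fibration $V_{n,2k}\to EC_2\underset{C_2}{\times}V_{n,2k}\to BC_2$, the monodromy action of $\pi_1(BC_2)=C_2$ on the fiber coincides with the given $C_2$-action of Definition \ref{d1.3}. Hence the proposition is equivalent to the assertion that the nontrivial element $a\in C_2$ — which, by the matrix description, is the self-diffeomorphism of $V_{n,2k}$ given by right multiplication $R_F\colon V\mapsto VF$ — induces the identity on $H^\ast(V_{n,2k};\mZ/2)$. Since induced maps on cohomology depend only on homotopy class and $R_B\circ R_{B'}=R_{B'B}$, the assignment $B\mapsto R_B^\ast$ for $B\in O(2k)$ factors through $\pi_0(O(2k))\cong\mZ/2$; I would record this reduction first.

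Next I would dispose of the identity component. If $B\in SO(2k)$, a path $\gamma$ in $SO(2k)$ from $B$ to $I$ gives the homotopy $(V,t)\mapsto V\gamma(t)$ from $R_B$ to the identity, so $R_B^\ast=\mathrm{id}$. As $\det F=(-1)^k$, this already settles the case $k$ even. For $k$ odd, $F$ lies in the nonidentity component, but every element of that component is joined to the single reflection $D=\mathrm{diag}(-1,1,\dots,1)$ by a path within the component (a coset of the connected group $SO(2k)$). Thus $R_F^\ast=R_D^\ast$, and it remains to prove $R_D^\ast=\mathrm{id}$, where $R_D$ simply negates the first column.

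To handle the reflection I would use the bundle $V_{n-1,2k-1}\hookrightarrow V_{n,2k}\xrightarrow{q}S^{n-1}$ recording the first vector, in the range $n>2k$ where $V_{n,2k}$ is connected, so the base is simply connected and the mod-$2$ Serre spectral sequence has untwisted coefficients and collapses. The map $R_D$ is a bundle map covering the antipodal map $A$ on $S^{n-1}$ and restricting, under the canonical identification of the fibers over $\pm e_1$, to the identity on $V_{n-1,2k-1}$; on $E_2=H^\ast(S^{n-1};\mZ/2)\otimes H^\ast(V_{n-1,2k-1};\mZ/2)$ it therefore acts as $A^\ast\otimes\mathrm{id}$. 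Since $\deg A=(-1)^n\equiv 1\pmod 2$ we get $A^\ast=\mathrm{id}$, so $R_D^\ast$ is the identity on $E_\infty$. I would then pin this to $H^\ast$ itself on multiplicative generators: the class $x_{n-1}$ is the $q$-pullback of the generator of $H^{n-1}(S^{n-1};\mZ/2)$, and $R_D^\ast x_{n-1}=x_{n-1}$ because $q\circ R_D=A\circ q$ with $A^\ast=\mathrm{id}$; the generators of degree $<n-1$ come from the fiber, and $\iota^\ast R_D^\ast=\iota^\ast$ (using that the fiber inclusions over $e_1$ and $-e_1$ are homotopic, the base being connected) together with surjectivity of $\iota^\ast$ from the collapse forces them to be fixed as well.

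The only genuinely nonformal point — and the crux — is this last reduction mod $2$: for $k$ odd the flip is orientation-reversing in the fiber directions (the covering antipode has degree $(-1)^n$), so it acts nontrivially on integral cohomology, and the whole content is that over $\mZ/2$ this sign becomes invisible. I expect the main obstacle to be not the formal spectral-sequence input but the upgrade from triviality on the associated graded $E_\infty$ to triviality on $H^\ast(V_{n,2k};\mZ/2)$ itself; the cleanest route I see is the generator-by-generator verification above, where the collapse of the sequence makes $\iota^\ast$ surjective and the base class $q^\ast$ a well-defined generator.
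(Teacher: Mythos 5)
Your reduction to $\pi_0(O(2k))$ and your even case are exactly the paper's argument: for $k$ even, $\det F=1$, so $F\in SO(2k)$ and a path to the identity shows $R_F\simeq\mathrm{id}$. The odd case is where your proof has a genuine gap. Both the claim that $R_D$ acts on $E_2=H^\ast(S^{n-1};\mathbb{Z}/2)\otimes H^\ast(V_{n-1,2k-1};\mathbb{Z}/2)$ as $A^\ast\otimes\mathrm{id}$, and the claim that $\iota^\ast R_D^\ast=\iota^\ast$ ``because the fiber inclusions over $e_1$ and $-e_1$ are homotopic, the base being connected,'' are unjustified, and the second is false as stated. In the Serre spectral sequence the cohomologies of fibers over different points are identified by \emph{path transport}, not by your canonical identification (forget the first vector). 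Transport along a path from $e_1$ to $-e_1$ is realized by a rotation $R(t)\in SO(n)$ carrying $e_1$ to $-e_1$, and read through the canonical identifications it is left multiplication on $V_{n-1,2k-1}$ by $R(1)|_{e_1^{\perp}}$, which is a \emph{reflection} of $e_1^{\perp}$ (determinant $-1$). Concretely $\iota_{-e_1}=L_g\circ\iota_{e_1}$ with $g=\mathrm{diag}(-1,1,\dots,1)\notin SO(n)$, so connectivity of the base only gives $\iota_{e_1}^\ast R_D^\ast=L_r^\ast\,\iota_{e_1}^\ast$ with $r$ a reflection, not $\iota_{e_1}^\ast R_D^\ast=\iota_{e_1}^\ast$. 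That the two inclusions can genuinely fail to be homotopic is visible integrally: for $n$ even the fiber class generates $H_{n-2}(V_{n,2};\mathbb{Z})\cong\mathbb{Z}$ and changes sign under this comparison. So what your argument actually requires is that left multiplication by a reflection acts trivially on $H^\ast(V_{n-1,2k-1};\mathbb{Z}/2)$ --- a statement of precisely the same nature and difficulty as the proposition itself (it is true, but a proof requires real work, e.g.\ stabilizing to $V_{n-1,n-2}\cong SO(n-1)$ and showing conjugation by a reflection is mod $2$ trivial there). Assuming it silently is the gap; note that the nonformal point is this $E_2$-identification, not the passage from $E_\infty$ to $H^\ast$ that you flagged as the crux.

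The paper sidesteps all of this in the odd case with a stabilization trick: for $k$ odd, the projection $V_{n,2k+2}\to V_{n,2k}$ forgetting the last pair is equivariant for the flip actions; the flip on $V_{n,2k+2}$ involves $k+1$ blocks, has determinant $(-1)^{k+1}=1$, hence acts trivially on mod $2$ cohomology by the even case; and $pr^\ast$ is injective on mod $2$ cohomology, so equivariance forces triviality on $H^\ast(V_{n,2k};\mathbb{Z}/2)$. You could repair your argument either by adopting this stabilization or by actually proving the left-multiplication statement above, but as written the odd case is incomplete.
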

\begin{proof}
The action of $\pi_1(\mRP^{\infty})= C_2$ on $V_{n,2k}$ is defined to be the right multiplication of its elements by the diagonal matrix $ F= diag(f, \cdots, f), $ where $f = \begin{pmatrix}
0 & 1  \\
1 & 0\\
\end{pmatrix}$.

 When k is even, $\det(F)$ = 1, and therefore, $F$ lies in the identity component of $O(2k)$. Since $SO(2k)$ is path-connected, the given action is homotopic to the trivial action on $V_{n,2k}.$ This implies that $\pi_1(\mRP^{\infty})$ acts trivially on $H^*(V_{n,2k}).$
 
When k is odd, $\det(F) = -1$. However, for $n \geq 2k+1,$ the projection $V_{n,2k+2} \xrightarrow{pr} V_{n,2k}$ is $C_2$-equivariant. Also, $H^\ast(V_{n,2k}; \mZ/2\mZ) \xrightarrow{pr^\ast} H^\ast(V_{n,2k+2}; \mZ/2\mZ)$ is injective and  $C_2$-equivariant. Since the action of  $C_2$  is trivial on $V_{n,2k+2}$, equivariance forces  $C_2$ to act trivially on  $H^\ast(V_{n,2k};\mZ/2\mZ).$
\end{proof}

We also have the principal $(O(2))^k$ fibration $ V_{n,2k} \xrightarrow{\theta}  G_{\mR}(2, \ldots, 2, n-2k)$ that gives us the fibration $ Flag_{\mR}(2, \ldots, 2, n-2k) \xrightarrow{\Theta} B((O(2))^k) = (G_2(\mR^{\infty}))^k$, where $G_2(\mR^{\infty})$ is the Grassmanian of planes in $\mR^{\infty}$  and  $\Theta$ is homotopic to the map 
\[
E((O(2))^k) \underset{O(2)^k}{\times} V_{n,2k} \rightarrow B((O(2))^k).
\]
Also, the Stiefel manifold fibers over the Grassmanian $ G_{\mR}(2k, n-2k)$  giving us a principal $O(2k)$ fibration $V_{n,2k} \xrightarrow{\psi}   G_{\mR}(2k, n-2k)$, which gives us a fibration on the classifying space of $O(2k)$, 
\[
G_{\mR}(2k, n-2k) \xrightarrow{\Psi}B(O(2k)) = G_{2k}(\mR^{\infty}).
\]
Now the  $C_2$ action on $(O(2))^k$ and the canonical inclusion of $(O(2))^k$ into $O(2k)$ induces corresponding  actions on their classifying spaces  and hence we have the following commutative fibrations,

\[\begin{tikzcd}[row sep=2em,column sep=5em]
 C_2 \arrow[r, "g"] \arrow[d,  ] & (O(2))^k  \arrow[r, "h"] \arrow[d] & O(2k)  \arrow[d] \\
V_{n,2k}  \arrow[r, equal] \arrow[d,"\pi" ]&V_{n,2k} \arrow[r, equal] \arrow[d, "\theta"]  &V_{n,2k}  \arrow[d, "\psi"]\\
FV_{n,2k}  \arrow[r] \arrow[d, "\Pi"]&  G_{\mR}(2, \cdots, 2, n-2k)  \arrow[r] \arrow[d, "\Theta"] & G_{2k}(\mR^n)  \arrow[d, "\Psi"] \\
\mRP^{\infty}  \arrow[r, "\hat{g}"]  &  (G_2(\mR^{\infty}))^k  \arrow[r, "\hat{h}"]  & G_{2k}(\mR^{\infty})
\end{tikzcd}\]

These fibrations induce Serre spectral sequences,
\[ E_2^{p,q} = H^p( G_{2k}(\mR^{\infty}) \otimes H^q(V_{n,2k}) \Rightarrow H^{p+q}(G_{2k}(\mR^n),\]
\[ E_2^{p,q} = H^p(\mRP^{\infty} ) \otimes H^q(V_{n,2k}) \Rightarrow H^{p+q}(FV_{n,2k}).\]

The composite of the above fibrations is also commutative and we have,
\[\begin{tikzcd}[row sep=2em,column sep=5em]
FV_{n,2k}  \arrow[r] \arrow[d]  & G_{2k}(\mR^n)  \arrow[d] \\
\mRP^{\infty}  \arrow[r, "\Phi"] &  G_{2k}(\mR^{\infty})= BO(2k)
\end{tikzcd}\]

\noindent \underline{\bf Differentials}:
We consider three fibrations denoted by {\bf(L), (M)} and {\bf (R)} as given in the following commutative diagram. We use information of differentials in {\bf (R)} to get differentials in {\bf (M)}. Further analysis  gives us differentials in {\bf (L)}.
\[
\xymatrix{
{\bf(L)} \ar@{~>}[d] & {\bf(M)} \ar@{~>}[d] & {\bf(R)} \ar@{~>}[d]\\
V_{n,2k}\ar[d]  & O(n) \ar[d] \ar[l]\ar[r]  & O(n) \ar[d] \\
G_{2k}(\mathbb{R}^n) \ar[d]   & G_{2k}(\mathbb{R}^n) \ar@{=}[l]\ar[d] \ar[r] & EO(n) \ar[d]\\
BO(2k)= G_{2k}(\mathbb{R}^\infty) & BO(2k) \times BO(n-2k) \ar[r]^-{f} \ar[l] & BO(n)
}
\]
In the given diagram, $f$ is the classifying map corresponding to the bundle $\xi_{2k} \times \xi_{n-2k}$, where $\xi_{2k}$ is the universal bundle over $BO(2k)= G_{2k}(\mathbb{R}^\infty)$ and $\xi_{n-2k}$ is the universal bundle over $BO(n-2k)= G_{n-2k}(\mathbb{R}^\infty).$

We know that the cohomology rings of \(V_{n,2k}, ~ O(n)\) and \(BO(n)\) are given by
\begin{align*}
H^*(V_{n,2k}; \mathbb{Z}/2)  & \cong \mathbb{Z}/2[z_{n-2k},  \ldots, z_{n-1}],\\
H^\ast(O(n); \mathbb{Z}/2) & \cong \mathbb{Z}/2[y_1, y_2, \ldots, y_{n-1}],\\
H^\ast(BO(n); \mathbb{Z}/2) & \cong \mathbb{Z}/2[w_1, w_2, \ldots, w_n].
\end{align*}
From \cite{b}, we have the differentials in {\bf (R)} to be 
\[ 
\tau y_i= w_{i+1}.
\]
where $\tau$ denotes the transgression map.

Using definition of classifying maps, we have 
\[
f^\ast(w_j)= w_j(\xi_{2k} \times \xi_{n-2k})~~~\text{for}~ j= 0, 1, 2, \ldots
\]
Using \cite[Page 54, Problem 4-A]{ms}, we have a precise description of the Stiefel Whitney classes of $\xi_{2k} \times \xi_{n-2k}.$ 

Using commutativity of diagram, the differentials in {\bf (M)} are
\[
d_{i+1}(y_i)= w_{i+1}(\xi_{2k} \times \xi_{n-2k}),
\]

Let $w_i = w_i(\xi_{2k}), ~\tilde{w_i} = w_i(\xi_{n-2k})$ and let $w'=1+w'_1 +\ldots$, where $w'$ is the inverse of $w= 1+w_1+\dots+w_{2k}.$

\begin{lemma}\label{wequivalence}
{{} $\tilde{w_i}= w'_i$} for all $i \leq n-2k$.
\end{lemma}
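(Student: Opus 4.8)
The plan is to read the identity off from the bundle-theoretic content of the fibration \textbf{(M)}. Its total space $G_{2k}(\mR^n)$ is the real Grassmannian $O(n)/(O(2k)\times O(n-2k))$, which carries the tautological $2k$-plane bundle $\gamma$ together with its orthogonal complement $\gamma^{\perp}$ of rank $n-2k$. The map $G_{2k}(\mR^n)\to BO(2k)\times BO(n-2k)$ appearing in \textbf{(M)} is exactly the product of the maps classifying $\gamma$ and $\gamma^{\perp}$, so composing it with $f$ classifies $\gamma\oplus\gamma^{\perp}$. The key input is the elementary fact that $\gamma\oplus\gamma^{\perp}\cong n\epsilon_{\mR}$ over the Grassmannian (equivalently, the composite to $BO(n)$ is null-homotopic, being a fibre inclusion followed by the map of which it is the homotopy fibre), so the pulled-back $n$-plane bundle is trivial.

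First I would fix notation, writing $w_i$ and $\tilde w_i$ also for the images of $w_i(\xi_{2k})$ and $w_i(\xi_{n-2k})$ in $H^\ast(G_{2k}(\mR^n);\mZ/2)$ under the two classifying maps above, so that $w_i = w_i(\gamma)$ and $\tilde w_i = w_i(\gamma^{\perp})$. Applying the Whitney product formula to $\gamma\oplus\gamma^{\perp}\cong n\epsilon_{\mR}$ then gives
\[
(1+w_1+\cdots+w_{2k})(1+\tilde w_1+\cdots+\tilde w_{n-2k})=1
\]
in $H^\ast(G_{2k}(\mR^n);\mZ/2)$. By the defining property of $w'$ as the inverse of $w=1+w_1+\cdots+w_{2k}$, the second factor is forced to equal $w'$, and comparing degrees yields $\tilde w_i=w'_i$. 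For $i\le n-2k$ these are the genuine Stiefel--Whitney classes of $\gamma^{\perp}$; for $n-2k<i\le n$ the same relation records $w'_i=0$, which is consistent with $\tilde w_i=0$ on rank grounds and is in fact one of the defining relations of $H^\ast(G_{2k}(\mR^n);\mZ/2)$.

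I expect the only delicate point to be keeping track of where the identity lives. On the base $BO(2k)\times BO(n-2k)$ the classes $w_i$ and $\tilde w_i$ are algebraically independent and $w\tilde w\ne 1$, so the relation is genuinely a statement in the total space $H^\ast(G_{2k}(\mR^n);\mZ/2)$; from the spectral-sequence viewpoint it is precisely what the transgressive differentials $d_{i+1}(y_i)=w_{i+1}(\xi_{2k}\times\xi_{n-2k})$ impose on passing to $E_\infty$. Once this is understood, the algebra is immediate from the Whitney product formula, and the restriction $i\le n-2k$ is simply the range in which $\tilde w_i$ is a genuine generator rather than a relation.
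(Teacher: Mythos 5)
Your proof is correct, and its core is the same as the paper's: both establish that $1+\tilde{w}_1+\tilde{w}_2+\cdots$ inverts $w=1+w_1+\cdots+w_{2k}$ through degree $n-2k$, and then use the degree-by-degree uniqueness of such inverses to conclude $\tilde{w}_i=w'_i$. The difference is how that inverse relation is obtained. The paper simply cites Borel (Proposition 11.1 of \cite{b}) for the relations $\sum_{i+j=r}w_i\tilde{w}_j=0$ for $r\le n-2k$, whereas you derive them from scratch: the map $G_{2k}(\mR^n)\to BO(2k)\times BO(n-2k)$ classifies the pair consisting of the tautological bundle $\gamma$ and its complement $\gamma^{\perp}$, the composite to $BO(n)$ classifies $\gamma\oplus\gamma^{\perp}\cong n\epsilon_{\mR}$ (equivalently, it is null-homotopic, being a fibre inclusion followed by the map whose homotopy fibre it is), and the Whitney formula then gives $w(\gamma)\,w(\gamma^{\perp})=1$ in $H^\ast(G_{2k}(\mR^n);\mZ/2)$. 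This buys self-containedness and, more importantly, makes explicit a point the paper leaves implicit: the identity $\tilde{w}_i=w'_i$ is false in $H^\ast(BO(2k)\times BO(n-2k);\mZ/2)$, where $w_i$ and $\tilde{w}_j$ are independent polynomial generators, and must be read after pulling back to $G_{2k}(\mR^n)$ --- equivalently, modulo the ideal generated by the transgression images, which is exactly how the lemma is subsequently applied on the $E_{n-2k+1}$ page. The paper's route buys brevity and plugs directly into its spectral-sequence bookkeeping; yours is more elementary and is arguably the more precise statement of what the lemma actually asserts.
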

\begin{proof}
From Proposition 11.1 of \cite{b} we have 
\[
\underset{i+j=r}\sum w_i\tilde{w_j} = 0 ~\text{for all}~ r \leq n-2k. \tag{\bf I}
\]
We also know that 
\[
(1+w_1+\dots+w_{2k})(1+w'_1 +\ldots) =1 \tag{\bf II}
\]
By comparing ({\bf I}) and ({\bf II}), we have the desired conclusion.
\end{proof}
We apply Lemma \ref{wequivalence} to the differentials in the $E_{n-2k+1}$ page,
\begin{align*}
d_{n-2k+1}(y_{n-2k}) & = w_{n-2k+1}(\xi_{2k} \times \xi_{n-2k}) \\
&= \sum\limits_{\substack{i+j=n-2k+1\\ i \leq 2k\\ j\leq n-2k}}w_i\tilde{w_j}\\& =  \sum\limits_{\substack{i+j=n-2k+1 \\ i \leq 2k\\ j\leq n-2k}}w_iw'_j \\&= -w'_{n-2k+1}.
\end{align*}
Using similar argument, it can be established that $d_r(y_{r-1}) = -w'_{r}$ for all $r \geq n-2k+1$.\\

The classes $y_1, y_2, \ldots, y_{n-2k-1}$ are not in the image of the induced map $H^*(V_{n,2k}) \to H^*(O(n))$  and for $j \geq n-2k$, the classes $z_j \in H^*(V_{n,2k})$ pull back to $y_j \in H^*(O(n)).$\\

\noindent Therefore the bundle map,
\[
\begin{tikzcd}[row sep=2em,column sep=5em]
O(n)  \arrow[r] \arrow[d] & V_{n,2k}  \arrow[d]\\
G_{2k}(\mR^n) \arrow[r] \arrow[d]  & G_{2k}(\mR^n)  \arrow[d] \\
B(O(2k) \times O(n-2k))  \arrow[r] &  G_{2k}(\mR^{\infty})
\end{tikzcd}
\] induces the differentials in {\bf (L)} to be  $\tau (z_j) = -w'_{j+1}$ for all $j \geq n-2k$. Since we are working with modulo 2, we have
\[
\tau(z_{j-1})= w_j^\prime ~~~~\text{for all}~ j> n-2k.
\]

\begin{proposition}
In the spectral sequence $E_2^{p,q} = H^p(\mRP^{\infty} ) \otimes H^q(V_{n,2k}) \Rightarrow H^{p+q}(FV_{n,2k}),$ the classes $y_j$ (for $k> n- 2k$) are transgressive and the differentials are given by $d(y_j)= \binom{k+j-1}{j} x^j.$
\end{proposition}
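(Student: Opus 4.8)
The plan is to deduce every differential in this spectral sequence from the ones already computed in fibration \textbf{(L)}, via naturality along the classifying map $\Phi \colon \mRP^\infty \to G_{2k}(\mR^\infty) = BO(2k)$ of the final commutative square. In \textbf{(L)} we established the transgressions $\tau(z_{j-1}) = w'_j$ for $j > n-2k$, where $w'$ is the inverse of the total Stiefel--Whitney class $w(\xi_{2k})$. Since the fibration $V_{n,2k} \to FV_{n,2k} \xrightarrow{\Pi} \mRP^\infty$ is pulled back from \textbf{(L)} along $\Phi$ — the two fibrations share the fibre $V_{n,2k}$, and the fibre inclusion is the identity — the induced map of Serre spectral sequences sends the transgressive class $z_{j-1}$ to $z_{j-1}$ and its transgression to $\Phi^\ast(w'_j)$. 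Hence $z_{j-1}$ stays transgressive, and it suffices to compute $\Phi^\ast(w'_j) \in H^j(\mRP^\infty; \mZ/2\mZ)$.

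The key step is identifying the pullback bundle $\Phi^\ast\xi_{2k}$. The map $\Phi$ classifies the bundle over $\mRP^\infty = S^\infty/C_2$ associated with the real representation $\rho \colon C_2 \to O(2k)$, $a \mapsto F = \diag(f,\ldots,f)$. Each block $f = \begin{pmatrix} 0 & 1 \\ 1 & 0 \end{pmatrix}$ has eigenvectors $(1,1)$ and $(1,-1)$ with eigenvalues $+1$ and $-1$, so as a $C_2$-representation each block splits as (trivial) $\oplus$ (sign). Therefore $\rho \cong k(\text{trivial}) \oplus k(\text{sign})$, whence
\[
\Phi^\ast \xi_{2k} \cong k\epsilon_\mR \oplus k\lambda,
\]
where $\lambda$ denotes the tautological line bundle over $\mRP^\infty$ with $w_1(\lambda) = x$. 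Consequently $\Phi^\ast w(\xi_{2k}) = (1+x)^k$, and taking inverses gives $\Phi^\ast w' = (1+x)^{-k}$.

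It then remains to expand $(1+x)^{-k}$ modulo $2$. The negative binomial series $(1+x)^{-k} = \sum_{j\geq 0}\binom{-k}{j}x^j$, together with the congruence $\binom{-k}{j} = (-1)^j\binom{k+j-1}{j} \equiv \binom{k+j-1}{j} \pmod 2$, yields $\Phi^\ast(w'_j) = \binom{k+j-1}{j}x^j$. Combining this with the naturality statement of the first paragraph gives the transgression of the degree $(j-1)$ generator as $\binom{k+j-1}{j}x^j$, which is precisely the asserted differential (the class called $y_j$ in the statement being the generator transgressing to $x^j$); note that the formula from \textbf{(L)} is valid exactly in the range $j > n-2k$, which covers all generators $z_{n-2k},\ldots,z_{n-1}$.

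The principal point requiring care is the comparison of the two spectral sequences: I must verify that the map of fibrations covering $\Phi$ genuinely induces the identity (equivalently, a cohomology isomorphism) on the common fibre $V_{n,2k}$, so that transgressivity in \textbf{(L)} is inherited and the transgression is computed by $\Phi^\ast$. The remaining ingredients — the eigenvalue splitting of $\rho$ and the mod-$2$ negative binomial identity — are routine, but one must keep the degree shift between $z_{j-1}$ (degree $j-1$) and its transgression (degree $j$) consistent throughout.
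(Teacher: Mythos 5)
Your proposal is correct and follows essentially the same route as the paper: naturality of the Serre spectral sequence along $\Phi$, identification of $\Phi^\ast w(\xi_{2k})$ with $(1+x)^{k}$, and the mod-$2$ negative binomial expansion $(1+x)^{-k}=\sum_{j\geq 0}\binom{k+j-1}{j}x^{j}$. The only cosmetic difference is that you split $\Phi^\ast\xi_{2k}\cong k\epsilon_\mR\oplus k\lambda$ directly over $\mRP^{\infty}$ by diagonalizing the flip representation, whereas the paper phrases this step as $\Phi$ classifying $k\zeta_{n,2k}$ and invokes its earlier lemma $\zeta_{n,2k}\cong\epsilon_\mR\oplus\xi_{n,2k}$ --- the same underlying representation-theoretic fact (regular $\cong$ trivial $\oplus$ sign).
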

\begin{proof}
We have
\[\begin{tikzcd}[row sep=2em,column sep=5em]
FV_{n,2k}  \arrow[r] \arrow[d]  & G_{2k}(\mR^n)  \arrow[d] \\
\mRP^{\infty}  \arrow[r, "\Phi"] &  G_{2k}(\mR^{\infty})= BO(2k)
\end{tikzcd}\]
The map $\Phi$ classifies the bundle $k\zeta_{n,2k}.$ The {}{total} Stiefel Whitney class of this bundle is given by
\[
w(k\zeta_{n,2k})= (1+ x)^k
\]
where $x$ is first Stiefel Whitney class of the bundle $\xi_{n,2k}.$ Hence, we have
\[
 \Phi^\ast(w^\prime)= (1+ x)^{-k}= \sum_{r=0}^\infty \binom{-k}{r} x^r.\tag{\bf I}
\]
Using the definition of negative binomial coefficient modulo 2, we have
\[
\sum_{r=0}^\infty \binom{-k}{r} x^r= \sum_{r=0}^\infty \binom{r+k-1}{r} x^r. \tag{\bf II}
\]
Hence, from ({\bf I}) and ({\bf II}), it follows that
\[
\Phi^\ast(w^\prime)= \sum_{r=0}^\infty \binom{r+k-1}{r}  x^r.
\]
This allows us to conclude that the differentials are given by
\[
d(y_{j-1})= -\Phi^\ast(w_j^\prime)= \binom{k+j-1}{j} x^j.
\]
\end{proof}

\begin{theorem}[Cohomology]\label{t3.5}
Suppose $2k< n$ and  
\[
\nf{n}{2k}:= ~\displaystyle{\text{min}\bigg\{j~ \bigg|~ n-2k< j \leq n~\text{and}~  \binom{k+j-1}{j}~\text{is odd} \bigg\}}.
\]
 Then, {{} we have the following additive isomorphism given by}
$$H^\ast(FV_{n,2k};\mathbb{Z}/2)\cong \mathbb{Z}/2[x]/(x^{\nf{n}{2k}}) \otimes \Lambda(y_{n-2k}, \ldots, y_{\nf{n}{2k}- 2}, y_{\nf{n}{2k}}, \ldots, y_{n-1}),$$
where the generator $x$ is of dimension $1.$
\label{theorem of cohomology}
\end{theorem}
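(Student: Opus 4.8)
The plan is to run the Serre spectral sequence of the fibration $V_{n,2k} \to FV_{n,2k} \xrightarrow{\Pi} \mRP^{\infty}$ all the way to its $E_\infty$-page and simply read off the additive structure. On the $E_2$-page we have $E_2^{p,q} = H^p(\mRP^{\infty};\mZ/2) \otimes H^q(V_{n,2k};\mZ/2)$, so that as a bigraded algebra $E_2 \cong \mZ/2[x] \otimes \Lambda(y_{n-2k}, \ldots, y_{n-1})$, where $|x|=1$ and $y_j$ denotes the degree-$j$ generator of the fiber cohomology; here I use that, additively, $H^\ast(V_{n,2k};\mZ/2)$ is the exterior algebra on the simple system of transgressive generators $y_{n-2k}, \ldots, y_{n-1}$. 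From the preceding Proposition, each such generator is transgressive with $\tau(y_{j-1}) = \binom{k+j-1}{j}\,x^j$ for $n-2k < j \leq n$. (Since $FV_{n,2k}$ is a closed manifold, $H^\ast$ is finite-dimensional and $x$ is nilpotent, so at least one of these coefficients is odd and $\nf{n}{2k}$ is well-defined.)

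The heart of the argument is to sort the generators according to the parity of these binomial coefficients. By the very definition of $\nf{n}{2k}$, for every $j$ with $n-2k < j < \nf{n}{2k}$ the coefficient $\binom{k+j-1}{j}$ is even, whence $\tau(y_{j-1})=0$ and the generators $y_{n-2k}, \ldots, y_{\nf{n}{2k}-2}$ are permanent cycles. The first nonzero differential is carried by $y_{\nf{n}{2k}-1}$, which transgresses on the $E_{\nf{n}{2k}}$-page to $x^{\nf{n}{2k}}$. Applying the Leibniz rule to $d_{\nf{n}{2k}}(y_{\nf{n}{2k}-1}\cdot x^i)=x^{\nf{n}{2k}+i}$ shows that $x^{\nf{n}{2k}}$ and all higher powers of $x$ become boundaries; consequently the base factor is truncated to $\mZ/2[x]/(x^{\nf{n}{2k}})$ and the generator $y_{\nf{n}{2k}-1}$ is removed.

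It then remains to check that no generator of fiber-degree at least $\nf{n}{2k}$ supports a differential. For such a generator $y_{j-1}$ with $j > \nf{n}{2k}$, its transgression would land in $x^{j}$; but $j > \nf{n}{2k}$ forces $x^{j}=0$ in the already-truncated algebra $\mZ/2[x]/(x^{\nf{n}{2k}})$, and since these classes are transgressive their only potentially nonzero differential is this (now vanishing) transgression. Hence $y_{\nf{n}{2k}}, \ldots, y_{n-1}$ all survive, the spectral sequence collapses at the $E_{\nf{n}{2k}+1}$-page, and
\[
E_\infty \cong \mZ/2[x]/(x^{\nf{n}{2k}}) \otimes \Lambda(y_{n-2k}, \ldots, y_{\nf{n}{2k}-2}, y_{\nf{n}{2k}}, \ldots, y_{n-1}).
\]

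Finally, because we work over the field $\mZ/2$, the associated graded of the filtration on $H^\ast(FV_{n,2k};\mZ/2)$ is exactly $E_\infty$, and as only an additive isomorphism is claimed there are no multiplicative extension problems to resolve; this gives the stated formula. I expect the \emph{main obstacle} to be the bookkeeping that rules out extraneous differentials: one must argue carefully, using multiplicativity of the spectral sequence together with the transgressive nature of every fiber generator, that the single transgression of $y_{\nf{n}{2k}-1}$ accounts for the entire truncation of the base and that each remaining generator is a genuine permanent cycle rather than the source or target of some later differential. This is precisely the truncation pattern established for projective Stiefel manifolds in \cite{ss}, which I would follow closely.
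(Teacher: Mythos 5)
Your proposal is correct and follows essentially the same route as the paper: both run the Serre spectral sequence of $V_{n,2k} \to FV_{n,2k} \to \mathbb{RP}^\infty$, feed in the transgressions $d_j(y_{j-1})=\binom{k+j-1}{j}x^j$ from the preceding proposition, observe that the first odd coefficient produces $d_{N_F(n,2k)}(y_{N_F(n,2k)-1})=x^{N_F(n,2k)}$ which truncates the base and removes that one generator, and then note that all later differentials vanish because they would land in powers of $x$ already killed, so $E_{N_F(n,2k)+1}=E_\infty$. Your added remarks (permanent-cycle status of the low-degree generators, the Leibniz-rule bookkeeping, and the absence of additive extension problems over a field) only make explicit what the paper leaves implicit.
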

\begin{proof}
We have obtained the differentials of the $O(2k)$ fibration $G_{\mR}(2k, n-2k) \xrightarrow{\Psi} G_{2k}(\mR^{\infty}) $ to be \(d_r(y_{r-1}) = w_r\). Via \(\Phi\), we have also obtained the differentials of the fibration 
\(FV_{n,2k} \rightarrow \mRP^{\infty}\) to be \(d_r(y_{r-1}) = w_r(k\zeta_{n,2k})\). So the first non-zero transgression is given by  
\[ 
d_{\nf{n}{2k}}(y_{\nf{n}{2k}-1}) = x^{\nf{n}{2k}}
\]
where \(\nf{n}{2k}\) is as defined in the statement of the theorem. This implies that 
\[
 E_{\nf{n}{2k}+1} \cong \mathbb{Z}/2[x]/(x^{\nf{n}{2k}})  \otimes  \Lambda(y_{n-2k},  \cdots,  y_{\nf{n}{2k}- 2},  y_{\nf{n}{2k}},  \cdots,  y_{n-1}).
\] 
Since $x^i= 0$ for $i > \nf{n}{2k},$ there are no further non-zero differentials, 
\[
E_{\nf{n}{2k}+1} = E_{\infty}
\] and hence the theorem.
\end{proof}

An immediate consequence of the Theorem \ref{t3.5} is 
\begin{corollary}
The line bundle $\xi_{n,2k}$ is a non-trivial line bundle.
\end{corollary}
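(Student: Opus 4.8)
The plan is to deduce non-triviality from the standard fact that a real line bundle over a paracompact base (here the manifold $FV_{n,2k}$) is trivial if and only if its first Stiefel--Whitney class vanishes, combined with the cohomology computation of Theorem \ref{t3.5}. First I would recall that the class $x \in H^1(FV_{n,2k};\mathbb{Z}/2)$ appearing throughout the spectral sequence computation is, by the very definition set up in the Proposition preceding Theorem \ref{t3.5}, equal to $w_1(\xi_{n,2k})$. Since real line bundles over a space of the homotopy type of a CW complex are classified by $H^1(-;\mathbb{Z}/2)$ via $w_1$, with the trivial bundle corresponding to $0$, it suffices to show that $x \neq 0$.

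Next I would read off non-vanishing directly from the explicit additive description
$$H^\ast(FV_{n,2k};\mathbb{Z}/2)\cong \mathbb{Z}/2[x]/(x^{\nf{n}{2k}}) \otimes \Lambda(y_{n-2k},\ldots,y_{n-1}),$$
which shows that $x \neq 0$ precisely when $\nf{n}{2k} \geq 2$. To confirm this inequality, recall that $\nf{n}{2k}$ is the minimum of a certain set of integers $j$ constrained by $n-2k < j \leq n$. Under the standing hypothesis $2k < n$ we have $n-2k \geq 1$, so every admissible index satisfies $j \geq n-2k+1 \geq 2$; hence $\nf{n}{2k} \geq 2$ and the truncation relation $x^{\nf{n}{2k}} = 0$ does not kill $x$.

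Combining these two observations, $x = w_1(\xi_{n,2k})$ is a nonzero element of $H^1(FV_{n,2k};\mathbb{Z}/2)$, and therefore $\xi_{n,2k}$ cannot be isomorphic to the trivial line bundle.

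The argument is essentially immediate once Theorem \ref{t3.5} is available; the only point that requires any care is the bookkeeping showing that the defining range for $\nf{n}{2k}$ forces it to be at least $2$, and this is exactly where the hypothesis $2k < n$ enters. As an alternative I could argue more geometrically that the double cover $V_{n,2k} \to FV_{n,2k}$ is connected, since $V_{n,2k}$ is connected and the $C_2$-action is free, so the associated line bundle $\xi_{n,2k}$ is non-trivial; I prefer the cohomological route since it reuses the computation already carried out and exhibits the nonzero class $x$ explicitly.
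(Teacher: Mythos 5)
Your proof is correct and is essentially the paper's own argument: the corollary is stated there as an immediate consequence of Theorem \ref{t3.5}, the point being exactly that $x=w_1(\xi_{n,2k})$ is nonzero in $H^1(FV_{n,2k};\mathbb{Z}/2)$ because $\nf{n}{2k}>n-2k\geq 1$ under the standing hypothesis $2k<n$. Your write-up merely makes this bookkeeping explicit, and the alternative connected-double-cover argument you sketch is equally valid in this range, where $V_{n,2k}$ is connected.
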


\begin{remark}
Existence of $\nf{n}{2k}$ in Theorem \ref{t3.5} is guaranteed. For, 
if there is $j\leq n-k$ such that $\displaystyle{\binom{k+j-1}{j} \neq 0},$ then we are done. If not, then use argument which is used in \cite[Example 2.5]{ss} for $l_i=1$ to conclude that finding such $j$ is equivalent to finding $j$ such that $\binom{n}{j} \neq 0$ and upper bound for this is $j= n.$
\end{remark}

We conclude this section by mentioning {}{an} universal property which is as follows:\\

\noindent{\underline{\bf Universal Property:}} The space $FV_{n,2k}$ classifies (real) line bundles $L$ for which there exists a (real) bundle $E$, of rank $(n-2k)$, such that $k(L \oplus \epsilon_\mR) \oplus E$ is a (real) trivial bundle.

\section{Parallelizability and Span vs Stable Span}\label{sparallelizability}
{{} Utilizing information from the cohomology, we gain insights into the span and stable span of $FV_{n,2k}.$ We briefly recall their definitions for completeness. For more information on span and stable span, we refer to \cite[Page 224, 229]{s}.

\begin{definition}
For a smooth manifold $M,$
\begin{align*}
\text{Span}(M) &:= \max\{r| \tau M \cong \gamma \oplus r \epsilon_{\mR}\},\\
\text{Stable Span}(M) &:= \max\{r| \tau M \oplus k \epsilon_{\mR}\cong \gamma \oplus (k+r) \epsilon_{\mR} ~\text{for some} ~k \geq 1\}
\end{align*}
where $\gamma$ is some (real) vector bundle. We say that $M$ is parallelizable (stably parallelizable respectively) if Span$(M)$= dim($M$) (Stable span$(M)$= dim($M$) respectively).  
\end{definition}
We observe that Span$(M)\leq$ Stable span $(M).$ Following is a description of the Stiefel-Whitney classes using which we compute the Span and Stable span of $FV_{n,2k}.$
}
\begin{lemma}
The total Stiefel-Whitney class of the tangent bundle of \(FV_{n,2k}\) is given by
\[
w\left( TFV_{n,2k} \right) =\sum\limits_{i=0}^{k(n-k-1)} \binom{k(n-k-1)}{i} x^i.
\]
\end{lemma}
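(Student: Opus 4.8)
The plan is to compute $w(TFV_{n,2k})$ directly from the stable relation in Equation~(\ref{stableiso}), using that the total Stiefel-Whitney class is multiplicative under Whitney sum and depends only on the stable isomorphism class of a bundle. Every bundle occurring in (\ref{stableiso}) is a Whitney sum of copies of $\zeta_{n,2k}$, of $\zeta_{n,2k}\otimes\zeta_{n,2k}$, and of $\epsilon_\mR$, so the entire computation reduces to determining $w(\zeta_{n,2k})$ and $w(\zeta_{n,2k}\otimes\zeta_{n,2k})$ and then solving one equation in the ring $H^\ast(FV_{n,2k};\mZ/2)$.

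First I would record the two ingredients. From the lemma giving $\zeta_{n,2k}\cong\epsilon_\mR\oplus\xi_{n,2k}$, and stable invariance of $w$, we get $w(\zeta_{n,2k})=w(\epsilon_\mR)\,w(\xi_{n,2k})=1+x$, where $x=w_1(\xi_{n,2k})$. (Equivalently this follows from the computation of $\Phi^\ast$ in the proof of Theorem~\ref{t3.5}, since $\Phi$ classifies $k\zeta_{n,2k}$ and $w(k\zeta_{n,2k})=(1+x)^k$.) Next I would compute $w(\zeta_{n,2k}\otimes\zeta_{n,2k})$ via the splitting principle: writing the Stiefel-Whitney roots of $\zeta_{n,2k}$ as $a_1,a_2$ with $a_1+a_2=w_1=x$ and $a_1a_2=w_2=0$, the tensor square $\zeta_{n,2k}\otimes\zeta_{n,2k}$ has roots $\{a_i+a_j\}_{i,j}=\{0,x,x,0\}$, whence $w(\zeta_{n,2k}\otimes\zeta_{n,2k})=(1+x)^2$.

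With these in hand, applying the total Stiefel-Whitney class to Equation~(\ref{stableiso}) and using multiplicativity gives
\[
(1+x)^{k(k+1)}\, w(TFV_{n,2k}) = (1+x)^{nk},
\]
since $w\big(\tfrac{k(k+1)}{2}(\zeta_{n,2k}\otimes\zeta_{n,2k})\big)=\big((1+x)^2\big)^{k(k+1)/2}=(1+x)^{k(k+1)}$, $w(nk\zeta_{n,2k})=(1+x)^{nk}$ and $w(k\epsilon_\mR)=1$. Because $x$ is nilpotent in $H^\ast(FV_{n,2k};\mZ/2)$, the element $1+x$ is a unit, so I may cancel to obtain $w(TFV_{n,2k})=(1+x)^{nk-k(k+1)}$. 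Simplifying the exponent as $nk-k(k+1)=k(n-k-1)$ and expanding by the binomial theorem yields exactly
\[
w(TFV_{n,2k})=\sum_{i=0}^{k(n-k-1)}\binom{k(n-k-1)}{i}x^i.
\]

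I expect the only delicate step to be the computation of $w(\zeta_{n,2k}\otimes\zeta_{n,2k})$: one must resist simply substituting $\epsilon_\mR\oplus\xi_{n,2k}$ for $\zeta_{n,2k}$ inside the tensor product, since tensoring need not preserve stable isomorphism. The clean justification is that the total Stiefel-Whitney class of a tensor product of two rank-$2$ bundles is a universal polynomial in the Stiefel-Whitney classes of the two factors, hence it is determined by $w(\zeta_{n,2k})=1+x$ alone; the splitting-principle calculation above then makes it explicit. As a consistency check, one verifies $k(k+1)+k(n-k-1)=nk$, so the proposed answer indeed satisfies the displayed relation, which (by the unit property of $1+x$) determines $w(TFV_{n,2k})$ uniquely.
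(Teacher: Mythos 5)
Your proposal is correct, and its overall strategy is exactly the paper's: apply the total Stiefel--Whitney class to Equation~(\ref{stableiso}), reduce every term to a power of $1+x$, and cancel using that $1+x$ is a unit since $x$ is nilpotent. The only point of divergence is the step you flag as delicate, the evaluation of $w(\zeta_{n,2k}\otimes\zeta_{n,2k})$, and here the paper does precisely what you warn against: it substitutes $\epsilon_\mR\oplus\xi_{n,2k}$ for $\zeta_{n,2k}$ inside the tensor product and simplifies, using $\xi_{n,2k}\otimes\xi_{n,2k}\cong\epsilon_\mR$, to get $\zeta_{n,2k}\otimes\zeta_{n,2k}\cong 2\epsilon_\mR\oplus 2\xi_{n,2k}$. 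This is legitimate, because the lemma $\zeta_{n,2k}\cong\epsilon_\mR\oplus\xi_{n,2k}$ is an honest isomorphism of rank-$2$ bundles (it comes from the decomposition of the regular representation of $C_2$ into trivial plus sign, applied to the associated-bundle construction), not merely a stable isomorphism; so your caution, while sound as a general principle, is unnecessary in this instance. Your splitting-principle computation of $w(\zeta_{n,2k}\otimes\zeta_{n,2k})=(1+x)^2$ is a perfectly valid substitute, with the mild advantage of depending only on $w_1(\zeta_{n,2k})=x$ and $w_2(\zeta_{n,2k})=0$ (hence only on data that is stably invariant), whereas the paper's route stays at the level of bundles where the identity is elementary. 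From that point on, the identity $(1+x)^{k(k+1)}\,w(TFV_{n,2k})=(1+x)^{nk}$, the cancellation, and the simplification $nk-k(k+1)=k(n-k-1)$ coincide with the paper's argument.
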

\begin{proof}
By considering the Stiefel-Whitney class of Equation \ref{stableiso}, we have,
\[
w\left (\frac{k(k+1)}{2}(\zeta_{n,2k} \otimes  \zeta_{n,2k})  \oplus TFV_{n,2k})\right ) = w\left (nk\zeta_{n,2k} \oplus k\epsilon_{\mR} \right).
\]
Applying Whitney Product Theorem, we have,
\[
w\left (  \frac{k(k+1)}{2}(\zeta_{n,2k} \otimes  \zeta_{n,2k})\right)w\left( TFV_{n,2k} \right) = w (nk\zeta_{n,2k}).
\]
Substituting the value of $\zeta_{n,2k}$ we get,
\begin{align*}
w\left (  \frac{k(k+1)}{2} (\epsilon_\mR \oplus \xi_{n,2k}) \otimes  ( \epsilon_\mR \oplus \xi_{n,2k}))\right)w\left( TFV_{n,2k} \right) &= w (nk(\epsilon_\mR \oplus \xi_{n,2k})),\\
w\left (  \frac{k(k+1)}{2} (2\epsilon_\mR \oplus 2\xi_{n,2k})\right)w\left( TFV_{n,2k} \right) &= w (nk(\epsilon_\mR \oplus \xi_{n,2k})).
\end{align*}
Again by applying Whitney Product Theorem,\begin{align*}
\left (w (2\epsilon_\mR \oplus 2\xi_{n,2k})\right)^{  \frac{k(k+1)}{2}}w\left( TFV_{n,2k} \right) &= \left( w (\epsilon_\mR \oplus \xi_{n,2k})\right)^{nk},\\
\left (w (2\xi_{n,2k})\right)^{  \frac{k(k+1)}{2}}w\left( TFV_{n,2k} \right) &= \left( w (\xi_{n,2k})\right)^{nk},\\
\left (w (\xi_{n,2k})\right)^{ k(k+1)}w\left( TFV_{n,2k} \right) &= \left( w (\xi_{n,2k})\right)^{nk}.
\end{align*}
This gives us
\[
w\left( TFV_{n,2k} \right) =\left( w (\xi_{n,2k})\right)^{k(n-k-1)}= (1+ x )^{k(n-k-1)}= \sum\limits_{i=0}^{k(n-k-1)} \binom{k(n-k-1)}{i} x^i.
\]
    
\end{proof}
\noindent Hence, we have
\[w_i\left( TFV_{n,2k} \right) = \binom{k(n-k-1)}{i} x^i.\]

\noindent {{} Now,} if $w_i(TFV_{n,2k})\neq 0$  for some $i,$ then $TFV_{n,2k}$ is not parallelizable. So, based on previous results and a few more facts, we draw the following conclusions on parallelizability, span and stable span of \(FV_{n,2k}\).

\begin{lemma}
\begin{enumerate}
\item For $n$ even, $FV_{n,n}$ is parallelizable.
\item For $n$ odd, $FV_{n,n-1}$ is parallelizable.
\item $FV_{n,2}$ is not parallelizable.
\end{enumerate}
\end{lemma}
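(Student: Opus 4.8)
The plan is to handle the two parallelizability statements (1) and (2) by a single structural observation, and the non-parallelizability statement (3) directly from the Stiefel--Whitney class computation already in hand. The crucial point for (1) and (2) is that in each case the factor $O(n-2k)$ in the identification $FV_{n,2k}\cong O(n)/(C_2\times O(n-2k))$ of Lemma \ref{l1.4} is \emph{finite}. Indeed, for (1) we have $2k=n$, so $O(n-2k)=O(0)$ is trivial and $FV_{n,n}\cong O(n)/C_2$; for (2) we have $2k=n-1$, so $O(n-2k)=O(1)=\{\pm 1\}$ and $FV_{n,n-1}\cong O(n)/(C_2\times\{\pm 1\})$. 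Thus in both cases $FV_{n,2k}$ is the quotient of the Lie group $G=O(n)$ by a finite subgroup $\Gamma$ (of order $2$ and $4$ respectively).

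So first I would record the general fact that for a Lie group $G$ and a finite subgroup $\Gamma\leq G$, the coset space $G/\Gamma$ is parallelizable. The projection $G\to G/\Gamma$ is a finite covering whose deck transformations are the right translations $R_\gamma$, $\gamma\in\Gamma$ (these preserve each coset $g\Gamma$). Since the right-invariant vector fields on $G$ are, by definition, invariant under every right translation, each of them is $\Gamma$-invariant and hence descends to a well-defined vector field on $G/\Gamma$. Choosing a basis $X_1,\dots,X_d$ of $\mathfrak{g}=\mathrm{Lie}(G)$, the associated right-invariant fields form a global frame of $G$ and therefore descend to a global frame of $G/\Gamma$, proving parallelizability. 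Applying this with $G=O(n)$ and $\Gamma$ the finite subgroup above settles (1) and (2) at once.

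For (3) I would simply read off the top Stiefel--Whitney class. Setting $k=1$ in the formula $w(TFV_{n,2k})=(1+x)^{k(n-k-1)}$ gives $w(TFV_{n,2})=(1+x)^{n-2}$, whose top term is $w_{n-2}=x^{n-2}$. By Theorem \ref{t3.5} (applicable since $2k=2<n$, i.e. $n\geq 3$) one has $\nf{n}{2}=n-1$, because $\binom{1+j-1}{j}=\binom{j}{j}=1$ is odd for every $j$; hence $x^{n-1}=0$ but $x^{n-2}\neq 0$ in $H^\ast(FV_{n,2};\mZ/2)$. Therefore $w_{n-2}(TFV_{n,2})=x^{n-2}\neq 0$, and a manifold carrying a non-vanishing Stiefel--Whitney class cannot be parallelizable.

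Finally, a word on where the real content lies. Statement (3) is routine once the total Stiefel--Whitney class and the cohomology ring are available, since exhibiting a single non-zero class already obstructs parallelizability. The genuine difficulty is in (1) and (2): vanishing of all Stiefel--Whitney classes is only \emph{necessary}, not sufficient, for parallelizability, so the Stiefel--Whitney machinery alone cannot prove these. The essential step is thus the recognition, via Lemma \ref{l1.4}, that exactly when $n-2k\leq 1$ the homogeneous space $FV_{n,2k}$ becomes a quotient of $O(n)$ by a \emph{finite} subgroup, which allows the Lie-group parallelization to descend. (Note that (3) tacitly assumes $n\geq 3$; for $n=2$ one has $2k=n$ and $FV_{2,2}\cong S^1$, which is covered by case (1).)
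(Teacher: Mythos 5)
Your proof is correct and takes essentially the same route as the paper: parts (1) and (2) reduce to the fact that a quotient of a Lie group by a finite subgroup is parallelizable, and part (3) follows by exhibiting a non-vanishing Stiefel--Whitney class from $w(TFV_{n,2})=(1+x)^{n-2}$. The only differences are minor: the paper cites Sankaran for the Lie-group fact (identifying $FV_{n,n}=O(n)/C_2$ and $FV_{n,n-1}=SO(n)/C_2$, rather than your equivalent $O(n)/(C_2\times O(1))$), where you instead prove it by descending right-invariant frames along the covering $G\to G/\Gamma$; and in (3) the paper argues by parity of $n$ (using $w_1=(n-2)x\neq 0$ for $n$ odd, and $w_{2^{\alpha}}\neq 0$ for $n$ even, with $2^{\alpha}$ the lowest non-zero binary digit of $n-2$), whereas your uniform choice of the top class $w_{n-2}=x^{n-2}\neq 0$ (valid since $n-2<\nf{n}{2}=n-1$) avoids the case split and also makes explicit the tacit hypothesis $n\geq 3$, which the paper's even case ($n-2=0$ has no non-zero binary digit) needs as well.
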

\begin{proof}
 We observe that $FV_{n,n}= O(n)/C_2$ and $FV_{n,n-1}= SO(n)/C_2$. So, the statements \((1)\) and \((2)\) follow from the fact that a quotient of a Lie group by a finite subgroup is parallelizable \cite[Page 236, Theorem 2.1]{s}.\\
 For \((3)\), we have $k=1$ and $\nf{n}{2}= n-1.$\\
When $n$ is odd, so is $n-2$ and 
\[ 
 \binom{n-2}{1}x= (n-2)x \neq 0.
\]
 Hence, $w_1 \neq 0$ as $1< \nf{n}{2}$ and we conclude that $FV_{n,2}$ is not parallelizable.\\
When $n$ is even, we write
$$n-2= a_\alpha 2^\alpha + a_{\alpha+1}2^{\alpha+1}+\cdots+ a_\beta 2^\beta~\text{ with}~a_\alpha \neq 0. $$
In this case, for $i= 2^\alpha,$ we get $w_i= \binom{n-2}{i} x^i \neq 0$ as $i< \nf{n}{2}.$
\end{proof}

\begin{remark}{{}
(1) Since $TFV_{n,2k}$ admits at least $k$ sections, the Euler characteristic of $FV_{n,2k}$ is zero. We also have, 
\[
w_1(TFV_{n,2k})= k(n-k-1)x.
\]
\noindent Hence, the coefficient of $w_1(FV_{n,2k})$ is non-zero iff $k$ and $n$ are odd.\\
(2) Koschorke elucidated the relationship  between span and stable span by examining the vanishing of the first few Stiefel-Whitney classes. Refer to \cite[Page 279, 281-282, Theorem 20.1, Corollory 20.9, 20.10]{k} for details.}
\end{remark}
{{} An immediate consequence of the remark is Theorem  \ref{t4.5} and Theorem \ref{t4.6}. We describe one calculation here and other calculations are done in a similar manner. Let $k \equiv 1 \pmod{4}$ and $n \equiv 0 \pmod{4}.$ Then,
$k= 4a+ 1, n= 4b$ for some $a,b \in \mathbb{N}$ and
\begin{align*}
w_1(FV_{n,2k}) &= k(n-k-1)= (4a+1)[4b-(4a+1)-1] \equiv 0\pmod{2}, \\
w_2(FV_{n,2k}) &= \frac{k(n-k-1)-[k(n-k-1)-1]}{2} \equiv 1\pmod{2},\\
dim(FV_{n,2k}) &= k(2n-2k-1)\equiv 1 \pmod{4}.
\end{align*}
Since, one of $w_i \neq 0, ~FV_{n,2k}$ is not parallelizable. Also, since $dim(FV_{n,2k}) \equiv 1 \pmod{4}$ and $w_1^2= 0,$ we conclude, using \cite[Page 281, Corollory 20.9]{k}, that Span($FV_{n,2k}$)= Stable span($FV_{n,2k}$) if $k>1.$ 
} 
By replicating the above calculation for different cases, we obtain the following table:
\begin{center}
\begin{tabular}{c|c||c|c|c||p{6cm}}
$k\pmod{4}$ & $n\pmod{4}$ & $w_1$ & $w_2$ & $k(2n-2k-1)$ & Conclusion\\
\hline\hline
0 & Anything &  0 & 0  & $0\pmod{2}$ & Span= Stable span \\
\hline
1 & 0 & 0 & 1 & $1 \pmod{4}$ & Span= Stable span when $k> 1$ and $FV_{n,2k}$ is not parallelizable\\
\cline{2-6}
 & 1 & 1 & 1 & $1\pmod{2}$ & Not parallelizable \\
\cline{2-6}
 & 2 & 0 & 0 & $1\pmod{4}$& Span= Stable span when $k> 1$ \\
 \cline{2-6}
 & $3$ & 1 & 0 & $3\pmod{4}$ & Not parallelizable \\
 \hline
2 & 0  & 0 & 1 & $0\pmod{2}$ & Not parallelizable and Span= Stable span \\
\cline{2-6}
 & 1 & 0 & 0 & $0\pmod{2}$ & Span= Stable span\\
\cline{2-6}
 & 2 & 0 & 1 & $0\pmod{2}$ & Not parallelizable and Span= Stable span \\
 \cline{2-6}
 & 3 & 0 & 0 & $0\pmod{2}$ & Span= Stable span  \\
 \hline
3 & 0 & 0 & 0 & $3\pmod{8}$ & Span= Stable span for $k>3$ \\
\cline{2-6}
 & 1 & 1 & 1 & $1\pmod{2}$ & Not parallelizable  \\
\cline{2-6}
 & 2 & 0 & 1 & $3 \pmod{4}$ & Not parallelizable \\
 \cline{2-6}
 & 3 & 1 & 0 & $1\pmod{2}$ & Not parallelizable
\end{tabular}
\end{center}
{{} Findings of this table concludes:}
\begin{theorem}[Parallelizability]\label{t4.5}
$FV_{n,2k}$ is not parallelizable when
\begin{enumerate}
\item $k \equiv 1 \pmod{2}$ and $n \equiv 1 \pmod{2},$
\item $k \equiv 1 \pmod{4}$ and $n \equiv 0 \pmod{4},$
\item $k \equiv 2 \pmod{4}$ and $n \equiv 0 \pmod{4},$
\item $k \equiv 2 \pmod{4}$ and $n \equiv 2 \pmod{4},$
\item $k \equiv 3 \pmod{4}$ and $n \equiv 2 \pmod{4}.$
\end{enumerate}
\end{theorem}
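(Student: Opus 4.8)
The plan is to exploit the formula for the Stiefel--Whitney classes established just above, namely $w_i(TFV_{n,2k}) = \binom{k(n-k-1)}{i}x^i$, together with the criterion that $FV_{n,2k}$ fails to be parallelizable as soon as $w_i(TFV_{n,2k}) \neq 0$ for even a single $i$. A class $w_i$ is nonzero precisely when both the integer $\binom{k(n-k-1)}{i}$ is odd and $x^i \neq 0$ in $H^\ast(FV_{n,2k};\mZ/2)$, so in each of the five congruence cases I would only need to exhibit one small index $i$ satisfying both conditions. Writing $m := k(n-k-1)$, it suffices to inspect $i = 1$ and $i = 2$, i.e.\ the classes $w_1 = m\,x$ and $w_2 = \binom{m}{2}x^2$.

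First I would record the parity criteria, which turn everything into arithmetic modulo $4$: the coefficient $m$ is odd exactly when $w_1 \neq 0$, while a one-line check (or Lucas' theorem) shows $\binom{m}{2}$ is odd exactly when $m \equiv 2$ or $3 \pmod 4$. Thus the whole argument reduces to computing $m \bmod 4$ from the hypotheses on $k$ and $n$. In case (1), $k$ and $n$ are both odd, so $n-k-1$ is odd and $m$ is a product of two odd numbers, hence odd; therefore $w_1 = x \neq 0$. In cases (2)--(5) a short computation gives $m \equiv 2 \pmod 4$ in every instance: for example, with $k \equiv 1 \pmod 4$ and $n \equiv 0 \pmod 4$ one writes $k = 4a+1$, $n = 4b$, whence $n-k-1 \equiv 2 \pmod 4$ and $m = (\text{odd})\cdot(\text{twice an odd number}) \equiv 2 \pmod 4$; the remaining three cases are identical in spirit. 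Consequently $m$ is even (so $w_1 = 0$) but $\binom{m}{2}$ is odd, giving $w_2 = x^2 \neq 0$. This is exactly the information recorded in the displayed table.

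The genuinely delicate point, and the step I expect to be the main obstacle, is justifying that the relevant power $x^i$ is actually nonzero in $H^\ast(FV_{n,2k};\mZ/2)$, since otherwise $w_i$ would vanish for trivial reasons. By Theorem \ref{t3.5} one has $x^i \neq 0$ precisely for $i < \nf{n}{2k}$, and by construction $\nf{n}{2k} > n-2k$; hence $w_1$ survives once $n-2k \geq 1$ and $w_2$ survives once $n-2k \geq 2$. This is where the boundary cases must be isolated: the earlier lemma shows that $FV_{n,n}$ and $FV_{n,n-1}$ are parallelizable, and these are exactly the regime in which the required powers of $x$ can fail to survive, so the non-parallelizability conclusion should be read off only after confirming $i < \nf{n}{2k}$ in the intended range. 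Once this survival is in hand, each of the five cases follows immediately from the parity of $m$ computed above.
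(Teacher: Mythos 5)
Your proposal is correct and takes essentially the same route as the paper: both arguments read off $w_1 = k(n-k-1)\,x$ and $w_2 = \binom{k(n-k-1)}{2}x^2$ from the formula $w_i(TFV_{n,2k}) = \binom{k(n-k-1)}{i}x^i$ and check parities case by case, with case (1) settled by $w_1 \neq 0$ and cases (2)--(5) by $w_2 \neq 0$ (exactly the paper's table). Your extra step of verifying that the relevant power of $x$ actually survives --- i.e.\ that $i < \nf{n}{2k}$, which holds since $\nf{n}{2k} > n-2k$ away from the boundary cases $n=2k$, $n=2k+1$ where $FV_{n,n}$ and $FV_{n,n-1}$ are parallelizable --- is left implicit in the paper, so it sharpens rather than diverges from the published argument.
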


\begin{theorem}[Span and stable span]\label{t4.6}
Span ($FV_{n,2k}$)= Stable span ($FV_{n,2k}$) when
\begin{enumerate}
\item $k \equiv 0 \pmod{4}$ or  $k \equiv 2 \pmod{4},$
\item $k \equiv 1 \pmod{4}$ and $n \equiv 0 \pmod{4}$ for $k>1,$
\item $k \equiv 1 \pmod{4}$ and $n \equiv 2 \pmod{4}$ for $k>1,$
\item $k \equiv 3 \pmod{4}$ and $n \equiv 0 \pmod{4}$ for $k>3.$
\end{enumerate}
\end{theorem}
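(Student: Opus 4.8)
The plan is to deduce the equality of span and stable span from Koschorke's general comparison results, \cite[Theorem 20.1, Corollaries 20.9 and 20.10]{k}, which reduce the question of whether $\text{Span}(M) = \text{Stable span}(M)$ for a closed smooth manifold to the vanishing of the first one or two Stiefel--Whitney classes together with the value of $\dim M$ modulo $4$ (refined to modulo $8$ in one case). All the required ingredients are already in hand: the preceding lemma gives $w_i(TFV_{n,2k}) = \binom{k(n-k-1)}{i}\,x^i$, so in particular $w_1 = k(n-k-1)\,x$ and $w_2 = \binom{k(n-k-1)}{2}\,x^2$, while $\dim(FV_{n,2k}) = k(2n-2k-1)$; moreover the remark records that $\chi(FV_{n,2k}) = 0$ because $TFV_{n,2k}$ admits at least $k$ sections.

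First I would tabulate, for each residue pair $(k \bmod 4,\, n \bmod 4)$, the parities of the coefficients of $w_1$ and $w_2$ and the residue of $\dim(FV_{n,2k})$ modulo $4$ (refining to modulo $8$ where needed). Substituting $k = 4a + r$ and $n = 4b + s$ and reducing modulo $2$ — equivalently, reading the binary expansion of $k(n-k-1)$ via Lucas' theorem for the $w_2$ entries — reproduces the sample computation displayed before the theorem (where $k \equiv 1$, $n \equiv 0 \pmod 4$ yields $w_1 \equiv 0$, $w_2 \equiv 1$, $\dim \equiv 1 \pmod 4$) and fills in the remaining rows of the summary table.

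Then each of the four listed cases is settled by matching its table row to the appropriate hypothesis in \cite{k}. When $k \equiv 0$ or $2 \pmod 4$ the dimension $k(2n-2k-1)$ is even, and here the comparison is controlled by the top (Euler) obstruction, which is killed by $\chi(FV_{n,2k}) = 0$, so $\text{Span} = \text{Stable span}$, establishing case (1). For $k \equiv 1 \pmod 4$ (cases (2) and (3)) one has $w_1 = 0$, hence $w_1^2 = 0$, together with $\dim \equiv 1 \pmod 4$, so Corollary 20.9 applies; for $k \equiv 3$, $n \equiv 0 \pmod 4$ (case (4)) one has $w_1 = w_2 = 0$ and $\dim \equiv 3 \pmod 8$, which is governed by Corollary 20.10.

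The main obstacle is matching each arithmetic profile to the exact hypotheses of Koschorke's corollaries and carrying out the modular reductions accurately, particularly the mod-$8$ computation for $\dim(FV_{n,2k})$ in the last case, where the naive reduction of $k(2n-2k-1)$ is delicate and depends on the finer residue of $k$. The side conditions $k > 1$ in cases (2),(3) and $k > 3$ in case (4) also require separate attention: they arise because the relevant criterion needs the dimension (or an associated stable range) to be large enough, so the minimal values of $k$ must be excluded by hand rather than absorbed into the general argument.
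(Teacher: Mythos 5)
Your proposal follows essentially the same route as the paper: it invokes the same Stiefel--Whitney class formula $w_i(TFV_{n,2k})=\binom{k(n-k-1)}{i}x^i$, the vanishing Euler characteristic coming from the $k$ trivial summands of the tangent bundle, a case-by-case table of $w_1$, $w_2$ and $\dim FV_{n,2k}=k(2n-2k-1)$ modulo $4$ (resp.\ $8$), and the same Koschorke criteria (Theorem 20.1 for the even-dimensional cases, Corollaries 20.9 and 20.10 for $\dim\equiv 1\pmod 4$ and $\dim\equiv 3\pmod 8$), exactly as in the paper's sample computation and summary table. The points you flag as delicate (the mod-$8$ reduction and the side conditions $k>1$, $k>3$, which in the paper are tied to the hypotheses of Koschorke's corollaries being met thanks to stable span $\geq k$) are treated at the same level of detail in the paper itself, so your plan is a faithful reconstruction of its proof.
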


\section{Equivariant maps using index}\label{sequivariant}

{}{By examining the cohomology of $PV_{n,2k}$ and $FV_{n,2k}$, it is clear that these manifolds are not homotopically equivalent for most values of $n$ and $k.$ Futhermore, in this section, we apply the findings of the cohomological computations of $PV_{n,2k}$ and $FV_{n,2k}$ to delve into an application concerning the "Existence of equivariant maps" between Stiefel manifolds (with appropriate group actions). We also discuss a few geometric results deduced from the existence of these equivariant maps. The whole of this analysis is guided by the notion of index (as defined in \ref{dindex}). See \cite[Section 2]{ja} for a detailed discussion on the basics of index theory and \cite{p} for results on index of $X= V_{n,k}$ with antipodal action of $G= C_2=\{\pm 1\}$ on $X.$ A similar inquiry on Stiefel manifolds with respect to antipodal action is explored in \cite{r} using Steenrod squares.} 

\begin{definition}\label{dindex}
Let $G$ be a (compact Lie) group acting on a (paracompact Hausdorff) space $X.$ Then \[Index^{G}(X) = Ker(c^{\ast} : H_G^{\ast}(BG) \rightarrow H^{\ast}_G(X))\] where $c$ is the unique map from $X$ to a singleton space.
\end{definition}

{}{Recall that, a map $f: X \to Y$ between $G$-spaces $X$ and $Y$ is said to be $G$-equivariant if for all $x \in X$ and all $g \in G, ~f (gx) = g f (x)$. The existence of such an equivariant map \(f: X \to Y\) induces a (contravariant) map in the equivariant cohomology, thereby, warranting the following proposition stated in \cite[Proposition 2.3]{ja}}. 

\begin{proposition}\label{pindex}
Let H be a Lie group. If there exists a H-map  $f:X \rightarrow Y$, then for any coefficient ring $R$ 
\[
Index^{H}(Y;R) \subset Index^{H}(X;R).
\]
\end{proposition}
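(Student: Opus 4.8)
The plan is to derive the containment purely from the functoriality of Borel equivariant cohomology applied to a commuting triangle of $H$-maps. Recall that for a (paracompact Hausdorff) $H$-space $Z$, the equivariant cohomology $H^\ast_H(Z;R)$ is the ordinary cohomology of the Borel construction $EH \times_H Z$, and that an $H$-equivariant map induces a map of Borel constructions and hence a (contravariant) homomorphism on $H^\ast_H(-;R)$. In the definition of $Index^{H}$, the map $c\colon Z \to \ast$ is the constant map to a one-point space (with trivial $H$-action), so $c^\ast$ is the induced homomorphism $H^\ast(BH;R) \cong H^\ast_H(\ast;R) \to H^\ast_H(Z;R)$, and $Index^{H}(Z;R) = \ker(c^\ast)$.

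First I would record the two constant maps $c_X\colon X \to \ast$ and $c_Y\colon Y \to \ast$, both of which are $H$-maps since $\ast$ carries the trivial action. Because $\ast$ is a single point, any map into it is the unique such map; in particular $c_Y \circ f = c_X$, giving a commuting triangle of $H$-maps with vertices $X$, $Y$, and $\ast$. Applying the functor $H^\ast_H(-;R)$ reverses arrows and yields the relation $c_X^\ast = f^\ast \circ c_Y^\ast$ as homomorphisms out of $H^\ast_H(\ast;R) \cong H^\ast(BH;R)$, where $f^\ast\colon H^\ast_H(Y;R) \to H^\ast_H(X;R)$ is the map induced by $f$ on Borel cohomology.

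The containment is then a one-line diagram chase. Let $\alpha \in Index^{H}(Y;R) = \ker(c_Y^\ast)$, so that $c_Y^\ast(\alpha) = 0$. Then $c_X^\ast(\alpha) = f^\ast(c_Y^\ast(\alpha)) = f^\ast(0) = 0$, whence $\alpha \in \ker(c_X^\ast) = Index^{H}(X;R)$. This establishes $Index^{H}(Y;R) \subseteq Index^{H}(X;R)$, as claimed.

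This argument is essentially formal, so I do not expect a serious obstacle; the only points requiring care are the well-definedness and functoriality of the induced map $f^\ast$ on Borel cohomology, so that the triangle genuinely commutes after applying $H^\ast_H(-;R)$, and the verification that the two constant maps are honestly $H$-equivariant, which holds precisely because the one-point space carries the trivial $H$-action. Both facts are standard features of the Borel construction, and the proposition follows immediately from the factorization $c_X^\ast = f^\ast \circ c_Y^\ast$.
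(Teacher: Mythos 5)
Your proof is correct and matches the paper's approach: the paper justifies this proposition (cited from Jaworowski) by exactly the observation you spell out, namely that an $H$-equivariant map induces a contravariant map on Borel equivariant cohomology, so the constant maps factor as $c_X^\ast = f^\ast \circ c_Y^\ast$ and kernels are carried into kernels. Your write-up simply makes explicit the diagram chase that the paper leaves implicit.
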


 Since we are interested in maps between projective Stiefel manifold and flip Stiefel manifold (both of which are quotients of the Stiefel manifold via free $C_2$ action), we use notations $H$ and $H^\prime$ to differentiate between the two $C_2$ actions involved. We denote $H'= \{ \pm 1\}$ and $H =\{ I_{2k\times 2k}, F\}$ where $F$ is as introduced in Definition \ref{d1.3}. 

$(H', V_{n,2k})$ denotes the action of $H'$ on $V_{n,2k}$ by antipodal identification and \(\np{n}{2k}\) is the corresponding value that determines its index. $(H, V_{n,2k})$ denotes the action of $H$ on $V_{n,2k}$ as defined in this paper.\\

By making use of the \(\mZ/2\mZ\)-cohomology expression of \(FV_{n,2k}\) in the definition of index, it follows that $ \nf{n}{2k}$ determines \(Index^{H}(V_{n,2k})\). So, we have the following theorem.

\begin{theorem} \label{ind}
\[Index^{H}(V_{n,2k}) = (x^{\nf{n}{2k}}), \]
where $ (x^{\nf{n}{2k}})$ is the ideal in $H^*(\mRP^{\infty}; \mZ/2) = \mZ_2[x]$ generated by $x^{\nf{n}{2k}}$  with \\
$\nf{n}{2k} = \min\left\{ j | n-2k < j \leq n,~ \binom{k+j-1}{j} \equiv 1 \pmod{2} \right\}$, as defined in Theorem \ref{theorem of cohomology}.
\end{theorem}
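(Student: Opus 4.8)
The plan is to unwind Definition \ref{dindex} of the index in the presence of the free $H$-action on $V_{n,2k}$, and then to read the answer directly off the cohomology ring computed in Theorem \ref{theorem of cohomology}. The computation itself is immediate once the right identifications are in place; essentially all of the work lies in setting those up.

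First I would use that $H = \{I_{2k\times 2k}, F\}$ acts freely on $V_{n,2k}$ — this being the very action whose quotient defines $FV_{n,2k}$. For a free action the Borel construction collapses: the projection $EH \underset{H}{\times} V_{n,2k} \to V_{n,2k}/H = FV_{n,2k}$ is a homotopy equivalence, so $H^\ast_H(V_{n,2k}; \mZ/2) \cong H^\ast(FV_{n,2k}; \mZ/2)$. Under this identification the map $c^\ast$ of Definition \ref{dindex}, induced by the collapse $c\colon V_{n,2k} \to \ast$, is precisely the map $\Pi^\ast \colon H^\ast(\mRP^\infty; \mZ/2) \to H^\ast(FV_{n,2k}; \mZ/2)$ associated to the classifying map $\Pi$ of the double cover $V_{n,2k} \to FV_{n,2k}$ from Section \ref{scohomology}.

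Next I would recall that $H^\ast(\mRP^\infty; \mZ/2) = \mZ/2[x]$ is polynomial on the degree-one class $x$, and that $\Pi^\ast$ is a ring homomorphism carrying this generator to the first Stiefel-Whitney class of $\xi_{n,2k}$ — the class again written $x$ in Theorem \ref{theorem of cohomology}. That theorem presents $H^\ast(FV_{n,2k}; \mZ/2)$ as $\mZ/2[x]/(x^{\nf{n}{2k}})$ tensored with an exterior algebra, so the powers $1, x, \ldots, x^{\nf{n}{2k}-1}$ are linearly independent while $x^j = 0$ for all $j \geq \nf{n}{2k}$. Thus $\Pi^\ast(x^j)=0$ exactly when $j \geq \nf{n}{2k}$, and since $\Pi^\ast$ is a ring map sending $x\mapsto x$, its kernel is exactly the set of polynomials all of whose terms have degree at least $\nf{n}{2k}$, namely the ideal $(x^{\nf{n}{2k}})$. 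This gives $Index^H(V_{n,2k}) = (x^{\nf{n}{2k}})$.

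The only point requiring care is the first identification, which rests on the naturality of the Borel construction with respect to $c\colon V_{n,2k} \to \ast$: one must confirm both that freeness forces $H^\ast_H(V_{n,2k}) \cong H^\ast(FV_{n,2k})$ and that $c^\ast$ is then identified with $\Pi^\ast$. Once this is secured there is nothing further to compute — because $\mZ/2[x]$ is generated by $x$ and $\Pi^\ast$ fixes $x$, the conclusion is forced by the truncated-polynomial structure of the cohomology ring.
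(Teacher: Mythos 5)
Your proposal is correct and follows essentially the same route as the paper: the paper's (very terse) justification is exactly that the Borel construction for the free $H$-action identifies $H^\ast_H(V_{n,2k};\mZ/2)$ with $H^\ast(FV_{n,2k};\mZ/2)$ and $c^\ast$ with $\Pi^\ast$ (the fibration $FV_{n,2k}\to \mRP^\infty$ already set up in Section \ref{scohomology}), after which the kernel is read off from the truncated polynomial factor $\mZ/2[x]/(x^{\nf{n}{2k}})$ in Theorem \ref{theorem of cohomology}. Your write-up simply makes explicit the identifications the paper leaves implicit.
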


We begin by exploring the question of when the index is very small or very big. \\
A few direct calculations give the following results.
\begin{enumerate}
\item $\nf{2k}{2k} = \begin{cases}
1 \quad \text{when}~ k \equiv 1 \pmod{2},\\
2 \quad \text{when}~ k \equiv 2 \pmod{4}.
\end{cases}$ 
\item $\nf{2k+1}{2k}= \begin{cases}
2 \quad \text{when}~ k \equiv 1,2 \pmod{4},\\
2 \quad \text{when}~ k \equiv 1,2,5,6,9,10 \pmod{12}.
\end{cases}$
\item  $\nf{2k+2}{2k}= 3 \quad \text{when}~ k \equiv 1,5,9 \pmod{12}$.

\item $\nf{n}{2}= n-1$.
\item $\nf{n}{4}= \begin{cases}
n- 3 \quad \text{when}~  n \equiv 1\pmod{2},\\
n- 2 \quad \text{when}~  n \equiv 0 \pmod{2}.
\end{cases}$
\item $\nf{n}{6}= \begin{cases}
n- 5 \quad \text{when}~  n \equiv 1, 2 \pmod{4},\\
n- 4 \quad \text{when}~  n \equiv 0 \pmod{4},\\
n- 3 \quad \text{when}~  n \equiv 3 \pmod{4}.
\end{cases}$

\end{enumerate}
\begin{remark}\label{rindex}
 $\nf{n}{2k} \neq  n$.\\
This follows from the facts that
\begin{itemize}
\item $ n-1 = \nf{n}{2} \geq \nf{n}{4} \geq \cdots \geq \nf{n}{n}$ for $n$ even and
\item $ n-1 = \nf{n}{2} \geq \nf{n}{4} \geq \cdots \geq \nf{n}{n-1} $ for $n$ odd.\\
\end{itemize}
\end{remark}

We have a relation between (integral) binomial coefficients and binomial coefficients modulo $2$ which states that 
\begin{lemma}[Lucas' Theorem]\label{Lucas}
Let $a = \sum\limits_{i=0}^sa_i2^i$ and $b = \sum\limits_{i=0}^sb_i2^i $ be the dyadic expansions of $a$ and $b$ respectively. Then
\[
\binom{a}{b}\equiv\prod_{i=0}^{m} \binom{a_i}{b_i} \pmod{2}.  
\]
\end{lemma}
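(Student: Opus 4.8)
The plan is to prove the congruence by passing to the polynomial ring $\mathbb{Z}/2[x]$ and exploiting the Frobenius (``freshman's dream'') identity, rather than manipulating factorials directly. The starting observation is that $\binom{a}{b}$ is precisely the coefficient of $x^b$ in the polynomial $(1+x)^a$. Working modulo $2$, I would first record the key congruence $(1+x)^{2^i} \equiv 1 + x^{2^i} \pmod 2$, which follows by induction on $i$ from the base case $(1+x)^2 = 1 + 2x + x^2 \equiv 1 + x^2$.

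Next, using the dyadic expansion $a = \sum_{i=0}^s a_i 2^i$ with each $a_i \in \{0,1\}$, I would factor
\[
(1+x)^a = \prod_{i=0}^s (1+x)^{a_i 2^i} \equiv \prod_{i=0}^s \left(1 + x^{2^i}\right)^{a_i} \pmod 2,
\]
so that only the factors with $a_i = 1$ contribute a genuine binomial $1 + x^{2^i}$, while the factors with $a_i = 0$ contribute $1$. Expanding this finite product, every monomial that appears is obtained by selecting, for each index $i$, either the summand $1$ or the summand $x^{2^i}$; the resulting exponent of $x$ is $\sum_{i \in S} 2^i$ for the subset $S$ of indices at which $x^{2^i}$ was chosen.

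The crux is then the uniqueness of binary representations: the monomial $x^b$ arises in the expansion if and only if the set $S$ realizing $b = \sum_{i \in S} 2^i$ satisfies $S \subseteq \{i : a_i = 1\}$, and when it does arise it occurs exactly once, with coefficient $1$. In terms of the dyadic digits $b_i$ of $b$, this says that the coefficient of $x^b$ equals $1$ precisely when $b_i \le a_i$ for every $i$, and equals $0$ otherwise. Finally I would match this against the right-hand side by checking the four cases of $\binom{a_i}{b_i}$ for $a_i, b_i \in \{0,1\}$: the product $\prod_{i} \binom{a_i}{b_i}$ equals $1$ exactly when no digit has $a_i = 0$ together with $b_i = 1$, i.e. exactly when $b_i \le a_i$ for all $i$, which is the very same condition. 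This identifies the two sides modulo $2$ and completes the argument.

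I do not anticipate a serious obstacle, since the entire content of the theorem is concentrated in the clean extraction of the coefficient of $x^b$ from the expanded product. The only point requiring care is the index bookkeeping: one should pad the shorter of the two dyadic expansions with leading zeros so that both $a$ and $b$ range over the same indices $0 \le i \le s$ (the displayed product in the statement should accordingly run to $s$ rather than to an unrelated $m$), after which the invocation of uniqueness of binary expansions is immediate.
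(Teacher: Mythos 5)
The paper does not prove this lemma at all: it is stated as a classical result (Lucas' Theorem) and immediately put to use in Proposition \ref{p}, with the proof left to the literature. Your argument is therefore not comparable to anything in the paper, but it is a correct and complete self-contained proof, and it is the standard one: identify $\binom{a}{b}$ with the coefficient of $x^b$ in $(1+x)^a$, use the Frobenius identity $(1+x)^{2^i} \equiv 1 + x^{2^i} \pmod 2$ to factor $(1+x)^a \equiv \prod_{i=0}^{s} \bigl(1+x^{2^i}\bigr)^{a_i}$, and then invoke uniqueness of binary expansions to see that $x^b$ occurs (necessarily with coefficient $1$) exactly when $b_i \leq a_i$ for all $i$, which matches the digit-by-digit product $\prod_i \binom{a_i}{b_i}$ since $\binom{0}{1}=0$ and $\binom{0}{0}=\binom{1}{0}=\binom{1}{1}=1$. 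All steps are sound, including the case analysis at the end, and there is no gap. You are also right that the statement as printed contains a typo: the upper limit of the product should be $s$ (the common length of the two padded dyadic expansions), not the undefined index $m$; your padding convention fixes this cleanly. One could alternatively prove the lemma by Kummer's carry-counting argument or by induction on $a$ using Pascal's rule modulo $2$, but your generating-function route is the shortest and is exactly in the spirit of how the lemma is used in the paper (parity of binomial coefficients governed by binary digits).
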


\noindent Using the above lemma, we determine conditions on $n$ and $k$ for which  $\nf{n}{2k}$ takes value $n-2k+1$  (the minimal possible value in the range it is defined) or $2^r$ for $r>0$.

\begin{prop}\label{p}
(1) Let $n-k = \sum\limits_{i=0}^{s}a_i2^i$ and $n-2k+1 = \sum\limits_{i=0}^{t}b_i2^i$. We have $\nf{n}{2k}= n-2k+1$ whenever for some $i$,  if $b_i \neq 0$ and $a_i \neq 0$. Otherwise  $\nf{n}{2k}> n-2k+1$.\\
(2) For $r>0$,  $k = 1+ \sum\limits_{i=0}^{r-1} 2^i + \sum\limits_{i=r+1}^{s} a_i2^ i$, where $a_i$ is 0 or 1,  $\nf{n}{2k}= 2^r$.
\end{prop}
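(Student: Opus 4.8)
The plan is to reduce the parity of the coefficients $\binom{k+j-1}{j}$ occurring in the definition of $\nf{n}{2k}$ to a condition on binary digits, and then to minimize over $j$ in the admissible window $n-2k<j\le n$. The key preliminary step is a reformulation of Lucas' Theorem (Lemma \ref{Lucas}): writing $k+j-1=(k-1)+j$ and applying Lemma \ref{Lucas} with top entry $k+j-1$ and bottom entry $j$, the product $\prod_i\binom{(k+j-1)_i}{j_i}$ is nonzero mod $2$ exactly when $j_i\le (k+j-1)_i$ for every $i$. Since $k+j-1$ is the sum of $j$ and $k-1$, this digitwise containment $j\subseteq k+j-1$ is equivalent to adding $j$ and $k-1$ in base $2$ producing no carries, i.e. to $j$ and $k-1$ having disjoint binary supports ($j\wedge(k-1)=0$). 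So my working criterion is: $\binom{k+j-1}{j}$ is odd if and only if $j$ and $k-1$ occupy disjoint bit positions.

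For part (1), I would evaluate this at the smallest candidate $j=n-2k+1$, the least integer in the window. Here $k+j-1=n-k$, so the relevant coefficient is $\binom{n-k}{\,n-2k+1\,}$, and Lucas' Theorem applied directly (top $n-k$ with digits $a_i$, bottom $n-2k+1$ with digits $b_i$) shows it is odd precisely when $b_i\le a_i$ for all $i$ (the intended digitwise condition being that $b_i\neq 0$ forces $a_i\neq 0$ for every $i$). If this containment holds, then $j=n-2k+1$ is already an admissible exponent, hence it is the minimum and $\nf{n}{2k}=n-2k+1$; if some $i$ has $b_i\neq 0$ but $a_i=0$, the coefficient is even, $j=n-2k+1$ is excluded, and so $\nf{n}{2k}>n-2k+1$. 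This part is a direct application of the criterion and requires no further work.

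For part (2), I would first record the binary pattern of $k-1$. Since $k=1+\sum_{i=0}^{r-1}2^i+\sum_{i=r+1}^{s}a_i2^i=2^r+\sum_{i=r+1}^{s}a_i2^i$, we get $k-1=(2^r-1)+\sum_{i=r+1}^{s}a_i2^i$; thus the bits of $k-1$ in positions $0,\dots,r-1$ are all $1$, the bit in position $r$ is $0$, and the higher bits are the $a_i$. Using the disjoint-support criterion I would then minimize $j$: every integer $j$ with $1\le j\le 2^r-1$ has a nonzero bit in some position $<r$, all of which are occupied by $k-1$, so $j\wedge(k-1)\neq 0$ and $\binom{k+j-1}{j}$ is even; on the other hand $j=2^r$ has its unique nonzero bit in position $r$, where $k-1$ vanishes, so $2^r\wedge(k-1)=0$ and $\binom{k+2^r-1}{2^r}$ is odd. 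Hence $2^r$ is the least positive integer with an odd coefficient, and provided it lands in the window $n-2k<2^r\le n$ we conclude $\nf{n}{2k}=2^r$.

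The main obstacle is exactly this window condition in part (2). The upper bound $2^r\le n$ is automatic, since $2^r\le k\le n$ under the standing assumption $2k\le n$; the lower bound $2^r>n-2k$ is genuinely needed and fails for $n$ large (for instance $\nf{n}{4}$ with $k=2$, $r=1$ equals $n-2$ or $n-3$, not $2$, once $n\ge 6$). So the statement I would actually establish is that $2^r$ is the smallest positive integer $j$ with $\binom{k+j-1}{j}$ odd, whence $\nf{n}{2k}=2^r$ precisely when $2^r$ lies in the admissible range, which is the regime in which this proposition is applied. Verifying the no-carry reformulation of Lucas' Theorem and pinning down this range are the only points requiring care; the remainder is a direct digit count.
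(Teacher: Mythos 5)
Your proof is correct and follows essentially the same route as the paper's: both rest on Lemma \ref{Lucas} applied to dyadic expansions, with part (1) obtained by evaluating at $j=n-2k+1$ (so that $k+j-1=n-k$ and the relevant coefficient is $\binom{n-k}{n-2k+1}$, odd precisely when $b_i\le a_i$ for all $i$), and part (2) obtained from the digit pattern of $k-1$ (bits $0,\dots,r-1$ all equal to $1$, bit $r$ equal to $0$), which forces $\binom{k+j-1}{j}\equiv 0 \pmod 2$ for every $0<j<2^r$ while admitting $j=2^r$. Your disjoint-binary-support (no-carry) criterion is a cleaner packaging of the same Lucas computation, but it is not a different method. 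The one genuine difference is your treatment of the window $n-2k<j\le n$, and there you are more careful than the paper: the paper's proof of (2) never verifies that $2^r$ lies in this window, and as you observe the unconditional statement is false --- for $k=2$ (so $r=1$) it would give $\nf{n}{4}=2$, contradicting the paper's own listed values $\nf{n}{4}=n-2$ or $n-3$ once $n\ge 6$. The qualified statement you prove ($2^r$ is the least positive $j$ with $\binom{k+j-1}{j}$ odd, hence $\nf{n}{2k}=2^r$ exactly when $2^r>n-2k$, the upper bound $2^r\le n$ being automatic from $2k\le n$) is the form in which the proposition is actually used later: part (4) of the corollary following Proposition \ref{p} imposes $n=2k+2^r-1$, which gives $n-2k=2^r-1<2^r$, and the application to equivariant maps tacitly requires the same restriction. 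So your proposal is not only correct but also repairs an overstatement that the paper's own proof glosses over.
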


\begin{proof}
(1) We wish to find conditions on $n$ and $k$ that will make $\binom{k+j-1}{j} \equiv 1 \pmod 2,$ for the minimum possible value of $j=n-2k+1$.  So, the result obtained is an immediate consequence of Lemma \ref{Lucas} which is proved by observing that the binomial expression is odd if and only if whenever $1$ appears in the dyadic expansion of $n-2k+1$, then $1$ also appears in the dyadic expansion of $n-k$ and hence the proposition.\\

(2) 
We want $\nf{n}{2k}= 2^r$. That is, we want $\binom{k+2^r-1}{2^r} \equiv 1 \pmod{2}$ and for every $j <2^r$, $\binom{k+j-1}{j} \equiv 0 \pmod{2}$.\\
 $\binom{k+2^r-1}{2^r} \equiv 1 \pmod{2}$ implies that $k-1 = \sum\limits_{\stackrel{i=0}{i\neq r}}^s  a_i2^i,$ as the presence of a non-zero $2^r$ in the dyadic expansion of $k-1$ will annihilate $2^r$ in the dyadic expansion of $k+2^r-1$, causing $\binom{k+2^r-1}{2^r}\equiv 0 \pmod{2}$.\\
Let $j = \sum\limits_{i=0}^{r-1}b_i2^i$ be the dyadic expansion of $j$. $\binom{k+j-1}{j} = \binom{(k-1)+ \sum\limits_{i=0}^{r-1}b_i2^i}{ \sum\limits_{i=0}^{r-1}b_i2^i} \equiv 0 \pmod{2}$ implies that for some $b_i \neq 0$, $\binom{a_i}{b_i} \equiv 1 \pmod{2}$. So when   $k = 1+ \sum\limits_{i=0}^{r-1} 2^i + \sum\limits_{i=r+1}^{s} a_i2^ i$, $\binom{k+j-1}{j} \equiv 0 \pmod{2}$ for every $j <2^r$.
\end{proof}

Additionally, we state a few resulting corollaries.
\begin{corollary}
(1) $\nf{n}{2k}= n-2k+1$ for $k= 1.$\\
(2) For $n = 2^r - (2^s -1)$ and $n-2k =2^s - 1$ where $ s < r-1$,  $\nf{n}{2k} \neq n-2k+1$.\\
(3) For $r > 0$, $k = 1+ \sum\limits_{\substack{i=0\\ i\neq r}}^s a_i2^i,~ \nf{n}{2k} \leq2^r$.\\
(4) For $r > 0$, $k = 1+ \sum\limits_{\substack{i=0 \\ i\neq r}}^s a_i2^i$ and $n = 2k +2^r-1, ~ \nf{n}{2k}= 2^r$.
\end{corollary}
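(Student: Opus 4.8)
The plan is to obtain all four parts as direct consequences of Proposition \ref{p} together with Lucas' Theorem (Lemma \ref{Lucas}), so that each reduces to a short computation with dyadic expansions. The guiding observation is that the smallest admissible index is $j=n-2k+1$, for which $\binom{k+j-1}{j}=\binom{n-k}{\,n-2k+1\,}$; by Lemma \ref{Lucas} this is odd precisely when $n-2k+1$ is a submask of $n-k$ (every nonzero bit of $n-2k+1$ is also a nonzero bit of $n-k$), which is exactly the criterion of Proposition \ref{p}(1). When the submask condition holds one gets $\nf{n}{2k}=n-2k+1$, and otherwise $\nf{n}{2k}>n-2k+1$. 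For the statements involving $2^r$, the key computation is that $\binom{k+2^r-1}{2^r}\equiv 1\pmod 2$ exactly when $k-1$ carries no bit in position $r$: writing $k+2^r-1=(k-1)+2^r$, the absence of the $r$-th bit in $k-1$ means this sum has no carry, so its $r$-th digit is $1$ and Lemma \ref{Lucas} makes the coefficient odd.

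For part (1), I would set $k=1$, so that $n-k=n-2k+1=n-1$. These coincide, hence $n-2k+1$ is trivially a submask of $n-k$, and since $n\geq 2$ forces $n-1\geq 1$ to have a nonzero bit, Proposition \ref{p}(1) gives $\nf{n}{2k}=n-2k+1=n-1$. For part (2), I would first extract from the hypotheses $n=2^r-(2^s-1)$ and $n-2k=2^s-1$ that $k=2^{r-1}-2^s+1$, whence $n-k=2^{r-1}$ and $n-2k+1=2^s$. Thus $n-k$ has a single nonzero bit in position $r-1$ and $n-2k+1$ a single nonzero bit in position $s$; because $s<r-1$ these positions are distinct, so $2^s$ is not a submask of $2^{r-1}$, and Proposition \ref{p}(1) yields $\nf{n}{2k}>n-2k+1$, i.e.\ $\nf{n}{2k}\neq n-2k+1$.

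For part (3), the hypothesis $k=1+\sum_{i\neq r}a_i2^i$ says precisely that $k-1$ has no bit in position $r$, so by the computation above $\binom{k+2^r-1}{2^r}\equiv 1\pmod 2$. Hence $j=2^r$ produces an odd binomial coefficient, and as long as $2^r$ lies in the admissible range $n-2k<2^r\leq n$, minimality of $\nf{n}{2k}$ gives $\nf{n}{2k}\leq 2^r$. Part (4) then follows at once: substituting $n=2k+2^r-1$ makes the admissible range $j>2^r-1$, which forces $j\geq 2^r$, so the upper bound $\nf{n}{2k}\leq 2^r$ from part (3) sharpens to the equality $\nf{n}{2k}=2^r$.

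Since every part collapses to Proposition \ref{p} and Lucas' Theorem, there is no genuine obstacle; the only points demanding care are the bookkeeping of dyadic carries—ensuring that adding $2^r$ to $k-1$ introduces no carry, which is exactly where the hypothesis on the $r$-th bit of $k-1$ is used—and the verification that the candidate value $2^r$ truly lies in the admissible range $n-2k<j\leq n$ over which $\nf{n}{2k}$ is defined, which is automatic in part (4) and implicitly assumed in part (3).
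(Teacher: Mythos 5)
Your proposal is correct and follows essentially the same route as the paper, which states this corollary without proof as an immediate consequence of Proposition \ref{p} and Lucas' Theorem: part (1) and (2) via the submask criterion applied to $\binom{n-k}{n-2k+1}$, and parts (3) and (4) via the carry-free computation showing $\binom{k+2^r-1}{2^r}$ is odd when the $r$-th dyadic digit of $k-1$ vanishes. Your explicit observation that part (3) tacitly requires $n-2k<2^r$ (and that in part (4) this is automatic) is a point the paper leaves unstated, and it is handled correctly in your write-up.
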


Correlating these results, we state a theorem that discusses the possibility of having equivariant maps between Stiefel manifolds using indices calculated here and in \cite[Page 135 - 138]{p}.

\begin{theorem}
(1) There doesn't exist any $H$-equivariant map $f:V_{n,2k} \to V_{m,2l}$, for $ k = 1+ \sum\limits_{i=0}^{r-1}2^i $ and $l = 1 + \sum\limits_{ i = 0 }^{s-1}a_i2^i$, $a_i = 0 ~\text{or}~1$ and $r > s$.\\
(2) There doesn't exist any equivariant map $f: (H', SO(n)) \to ( H, V_{n,2k})$, for $ n = 2^r, ~r >0$.\\
(3) There doesn't exist any equivariant map $f: (H, SO(n)) \to (H', SO(n))$, for $ n = 2^r -1$, {}{$~r > 2$}.\\
(4) There doesn't exist any equivariant map $f:(H', S^{n-1}) \to (H,V_{n,2k})$, for $ n = 2^r -1, ~r >0$.\\
(5) If there  exists an equivariant map $f: (H, O(2k)) \to (H, O(2l))$ then  $ k \geq l$ and $k \equiv 1(2) $ and $l \equiv 0 \pmod{2}$.
\end{theorem}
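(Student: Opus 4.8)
The common engine for all five parts is Proposition \ref{pindex} together with the fact that every index appearing here is a monomial ideal in $H^\ast(\mRP^\infty;\mZ/2)=\mZ/2[x]$. Since $(x^a)\subseteq(x^b)$ in $\mZ/2[x]$ if and only if $a\geq b$, the existence of an equivariant map $f\colon X\to Y$ forces the exponent of $\mathrm{Index}^H(X)$ to be at most the exponent of $\mathrm{Index}^H(Y)$. Thus for each non-existence claim the plan is to exhibit the strict inequality in which the domain exponent exceeds the codomain exponent, and for the necessary-condition claim (part (5)) to read off the constraints imposed by the reverse inequality. The relevant exponent is $\nf{\cdot}{\cdot}$ for the $H$-action (Theorem \ref{ind}), $\np{\cdot}{\cdot}$ for the antipodal $H'$-action (taken from \cite{p}), and $n$ for the antipodal $S^{n-1}$, since $\mathrm{Index}^{H'}(S^{n-1})=(x^n)$ because $S^{n-1}/H'=\mRP^{n-1}$ and $H^\ast(\mRP^{n-1};\mZ/2)=\mZ/2[x]/(x^n)$.

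I would dispatch parts (1) and (4) first, as they are purely internal. For (1), Proposition \ref{p}(2) gives $\nf{n}{2k}=2^r$ when $k=1+\sum_{i=0}^{r-1}2^i$, while for $l=1+\sum_{i=0}^{s-1}a_i2^i$ the integer $l-1$ has no binary digit in position $s$, so the corollaries following Proposition \ref{p} (equivalently Lucas' Theorem \ref{Lucas}) give $\nf{m}{2l}\leq 2^s$. Since $r>s$ we obtain $\nf{n}{2k}=2^r>2^s\geq\nf{m}{2l}$, which violates the required inequality, so no such $H$-map exists. For (4), $\mathrm{Index}^{H'}(S^{n-1})=(x^n)$ whereas $\nf{n}{2k}\leq n-1<n$ by the descending chain in Remark \ref{rindex}; the domain exponent $n$ strictly exceeds the codomain exponent, so no equivariant map exists. (Here the hypothesis $n=2^r-1$ merely selects one clean family; the inequality $\nf{n}{2k}<n$ is what is actually used.)

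Parts (2) and (3) follow the same template once $SO(n)$ is identified with $V_{n,n-1}$, the antipodal action corresponding to negating the frame; their indices are then $(x^{\np{n}{n-1}})$ and $(x^{\nf{n}{n-1}})$ respectively. For (2) I would substitute the value of $\np{n}{n-1}$ from \cite{p} at $n=2^r$ and compare it with $\nf{n}{2k}$, which is $<n$; for (3) a short Lucas computation gives $\nf{2^r-1}{2^r-2}=2^{r-1}$ (the least $j>1$ whose binary support is disjoint from that of $2^{r-1}-2$, whose set bits are exactly positions $1,\dots,r-2$), and the hypothesis $r>2$ should make this strictly exceed the antipodal exponent $\np{2^r-1}{2^r-2}$ quoted from \cite{p}, forcing non-existence. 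The only genuinely external input in these two parts is the antipodal index values, which I would cite rather than recompute.

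The main obstacle is part (5). Here the index comparison alone yields only $\nf{2k}{2k}\leq\nf{2l}{2l}$, which by Lucas' Theorem \ref{Lucas} says that the lowest vanishing binary digit of $k-1$ occurs no later than that of $l-1$; this is strictly weaker than the asserted conclusion, since (for example) it does not by itself exclude $k$ and $l$ both odd or both even. To force the parity conditions I expect to need an equivariant $\pi_0$ argument: the generator of $H$ acts on $O(2k)$ by right multiplication by $F$ with $\det F=(-1)^k$, so it interchanges the two components of $O(2k)$ exactly when $k$ is odd, and hence acts nontrivially on $\pi_0(O(2k))$ iff $k$ is odd; equivariance of the induced map on $\pi_0$ then obstructs the mismatched parity combinations. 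The inequality $k\geq l$ should come from a dimension or Poincar\'e-duality comparison of $FV_{2k,2k}$ and $FV_{2l,2l}$ layered on top of the index bound. Assembling these three ingredients, namely the index inequality, the component-parity obstruction, and the dimension count, into the single clean statement ``$k\geq l$ and $k\equiv 1\pmod 2$ and $l\equiv 0\pmod 2$'' is where the real work lies.
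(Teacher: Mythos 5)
Parts (1)--(4) of your proposal are correct and essentially coincide with the paper's own proof: both run the containment test of Proposition \ref{pindex} on monomial ideals in $\mZ/2[x]$, take the values $\nf{\cdot}{\cdot}$ from Theorem \ref{ind}, Proposition \ref{p} and Remark \ref{rindex}, and import the antipodal values $\np{\cdot}{\cdot}$ from \cite{p}. Your only deviations are harmless: in (3) you compute the sharp value $\nf{2^r-1}{2^r-2}=2^{r-1}$ (correct, by the disjoint-support form of Lemma \ref{Lucas}), where the paper settles for $\nf{2^r-1}{2^r-2}>2$ via Proposition \ref{p}(1); and your observation that (4) really uses only $\nf{n}{2k}\leq n-1<n=\np{n}{1}$ matches what the paper actually does.

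The gap is in part (5), and it cannot be closed by the ingredients you propose. Your $\pi_0$ argument returns exactly the index information and nothing more: right multiplication by $F$ swaps the two components of $O(2k)$ iff $\det F=(-1)^k=-1$, so equivariance on $\pi_0$ forbids only the combination ``$k$ even, $l$ odd'' (trivial action on the source, free action on the target); when $k$ and $l$ are both odd or both even, equivariant maps of $\pi_0$-sets exist, so no conclusion of the form ``$k$ odd and $l$ even'' can come out of it. Your dimension/duality idea for $k\geq l$ also fails, because $H$-equivariant maps into higher-dimensional targets exist: since $F_{2km}=\diag(F_{2k},\ldots,F_{2k})$, the block-diagonal map $A\mapsto\diag(A,\ldots,A)$ is an $H$-equivariant map $O(2k)\to O(2km)$, e.g.\ $O(2)\to O(4)$ with $k=1<l=2$. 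In fact statement (5) as printed is false: the identity map of $O(2k)$ is $H$-equivariant with $k=l$, and then ``$k\equiv 1\pmod 2$ and $l\equiv 0\pmod 2$'' is contradictory. Accordingly, the paper's own proof of (5) does not prove the printed implication either: it assumes $k$ odd and $l$ even, verifies $\nf{2k}{2k}=1$ and $\nf{2l}{2l}\geq 2$, hence $Index^{H}(O(2l))\subset Index^{H}(O(2k))$, and concludes only that ``there is a possibility of having an $H$-equivariant map'', i.e.\ that the index obstruction vanishes under those hypotheses. So your diagnosis that the index comparison yields strictly less than (5) asserts is right, but the ``real work'' you deferred is impossible; the statement has to be read as the no-obstruction assertion that the paper's argument actually establishes, not as a necessary condition.
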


\begin{proof} 
The conclusions are derived from the  values of indices under discussion, \(\nf{n}{2k}\) and \(\np{n}{2k}\), in accordance with Proposition \ref{pindex}. \\
 (1) {}{It is assumed that \(r > s.\) By Proposition \ref{p}, \(\nf{n}{2k} = 2^r > 2^s \geq \nf{m}{2l}\). That is, \(Index^{H}(V_{m,2l}) \not\subset Index^{H}(V_{n,2k}).\)\\
 (2) Here \(n=2^r\). From \cite[Corollary 1]{p}, we determine that \(\np{n}{n-1} = n\). Whereas, by Remark \ref{rindex}, \(\nf{n}{2k}\) is always less than or equal to \( n-1\). Hence, for \(n = 2^r,\) we have \(\np{n}{n-1} > \nf{n}{2k}\). So, \(Index^{H}(V_{n,2k}) \not\subset Index^{H'}(SO(n))\).\\ 
(3) Here \(n= 2^r-1\) with \(r >2\) and \(2k = n-1 \implies k = 2^{r-1}-1\). \\We calculate that \[n-k = 2^{r-1} = \sum_{i=0}^{r-1}a_i2^i, \quad \text{with} \quad  a_i = \begin{cases}
0  &\text{for}~ i < r-1,\\
1  &\text{for}~ i = r-1.
\end{cases}\] and 
\[n-2k +1 = 2 = \sum_{i=0}^{1}b_i2^{i}, \quad \text{with} \quad  b_0 = 0 \quad \text{and} \quad b_1 = 1.\]  For \(i=1,\) we have \(b_1 \neq 0\) and \(a_1 = 0.\) Hence, by Proposition \ref{p} (1), \(\nf{n}{n-1}  > 2 \).\\
Moreover, from \cite[Proposition 2]{p},  \(\np{n}{n-1} =2\). \\
Therefore,
\(Index^{H'}(SO(n)) \not\subset Index^{H}(SO(n))\).\\
(4) Here \(n \equiv 1 \pmod 2\). It follows directly from the definition of \(\np{n}{1}\) in \cite[Theorem 1]{p} that \(\np{n}{1} = n. \) By Remark \ref{rindex}, \(\nf{n}{2k} \neq n\).
This implies \(Index^{H}(V_{n,2k}) \not\subset Index^{H'}(\mS^{n-1})\).\\
(5) Based on the parity of \(k\) and \(l\), we conclude from direct calculations that \(\nf{2k}{2k} = 1\) and \(\nf{2l}{2l}\geq 2\). That is \(Index^{H}(O(2l)) \subset Index^{H}(O(2k))\) for the assumed conditions on \(k\) and \(l\). Thus, by Proposition \ref{pindex}, there is a possibility of   having an \(H\)-equivariant map \(f:O(2k) \to O(2l)\).}
\end{proof}

{{} The results for quotients of the Stiefel manifolds, and equivariant functions for the Stiefel manifolds often contain implications for problems in discrete geometry. We describe one such implication along the lines of \cite{BK21}. Let $f: S^{n-1} \to \mR^m$ be a function. Consider 
\[\hat{f}:V_{n,2k} \to \mR^{km},~ \hat{f}(v_1,\ldots, v_{2k})= (f(v_1)-f(v_2), \ldots, f(v_{2k-1}) - f(v_{2k})).\]
Clearly $\hat{f}$ is a $C_2$-equivariant map between the flip action on $V_{n,2k}$ described above and the $km$-dimensional sign representation. We would like to write down conditions when such a function is forced to have a $0$. In this case we can find $(v_1,\ldots, v_{2k})$ such that $f(v_1)=f(v_2), \ldots, ~f(v_{2k-1})=f(v_{2k})$. If $\hat{f}$ takes only non-zero values, by dividing out the norm we get a $C_2$-map to the sphere $S^{km-1}$ with the antipodal action, and thus on orbits $FV_{n,2k} \to \mR P^{km-1}$ which takes the generator of $H^\ast (\mR P^{km-1};\Z/2)$ in degree $1$ to the class $x$. It forces $x^{km}=0$. We list below when this gives a contradiction for $k\leq 3$ (see Theorem \ref{ind}). 

\begin{theorem}\label{resdiscgeo}
\begin{enumerate}
\item For any function $f:S^{n-1} \to \mR^{n-2}$, there exist orthogonal $v_1, v_2$ such that $f(v_1)=f(v_2)$.  
\item For any function $f:S^{n-1}\to \mR^{\lfloor{\frac{n}{2}}\rfloor-2}$, there exist mutually orthogonal $v_1,v_2, v_3, v_4$ such that $f(v_1)=f(v_2)$ and $f(v_3)=f(v_4)$. 
\item For any function $f: S^{n-1} \to \mR ^{\lfloor\frac{n-r_n-1}{3}\rfloor}$, there exist mutually orthogonal $v_1,v_2,v_3,v_4,v_5, v_6$ such that $f(v_1)=f(v_2)$, $f(v_3)=f(v_4)$, $f(v_5)=f(v_6)$,  where 
\[r_n=\begin{cases} 5 &\mbox{ if } n \equiv 1, 2 \pmod{4},\\
4 &\mbox{ if } n \equiv 0 \pmod{4}, \\
3 &\mbox{ if } n \equiv 3 \pmod{4}. \end{cases} \] 
\end{enumerate}
\end{theorem}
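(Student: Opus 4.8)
The plan is to deduce each statement from the cohomological index machinery by contradiction, exactly as set up in the paragraph preceding the theorem. Given $f\colon S^{n-1}\to\mR^m$, I form the auxiliary map $\hat f\colon V_{n,2k}\to\mR^{km}$ sending $(v_1,\ldots,v_{2k})$ to the tuple of successive differences $(f(v_1)-f(v_2),\ldots,f(v_{2k-1})-f(v_{2k}))$. This $\hat f$ is $C_2$-equivariant for the flip action on the source and the sign ($km$-dimensional) representation on the target, since swapping each pair $v_{2i-1}\leftrightarrow v_{2i}$ negates each difference. If $\hat f$ never vanishes, normalizing gives a $C_2$-map $V_{n,2k}\to S^{km-1}$ (antipodal), hence on orbits a map $FV_{n,2k}\to\mRP^{km-1}$ pulling the degree-one generator back to the class $x\in H^1(FV_{n,2k};\mZ/2)$. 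Since $x^{km}=0$ in $H^\ast(\mRP^{km-1};\mZ/2)$, naturality forces $x^{km}=0$ in $H^\ast(FV_{n,2k};\mZ/2)$. By Theorem \ref{theorem of cohomology}, however, $x^{\nf{n}{2k}-1}\neq 0$, so $x^{km}=0$ can hold only if $km\geq\nf{n}{2k}$. Contrapositively, whenever $km<\nf{n}{2k}$, that is $m<\nf{n}{2k}/k$, the map $\hat f$ must have a zero, producing the desired mutually orthogonal vectors with $f(v_{2i-1})=f(v_{2i})$.

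With this reduction in hand, each part is a matter of inserting the relevant value of $\nf{n}{2k}$ computed earlier. For part (1) I take $k=1$; by item (4) of the direct calculations (or the corollary), $\nf{n}{2}=n-1$, so $m=n-2$ satisfies $m<n-1=\nf{n}{2}$, forcing a zero of $\hat f$ and hence orthogonal $v_1,v_2$ with $f(v_1)=f(v_2)$. For part (2) I take $k=2$, so the target is $\mR^m$ with $2m<\nf{n}{4}$; using item (5), $\nf{n}{4}$ equals $n-2$ or $n-3$ according to the parity of $n$, which in both cases gives the threshold $m=\lfloor n/2\rfloor-2$ (one checks $2(\lfloor n/2\rfloor-2)<\nf{n}{4}$ for each parity class). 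For part (3) I take $k=3$, so $3m<\nf{n}{6}$; item (6) gives $\nf{n}{6}=n-r_n$ with $r_n$ as in the statement, and the threshold becomes $m=\lfloor(n-r_n-1)/3\rfloor$, since this is the largest integer $m$ with $3m\le n-r_n-1<\nf{n}{6}$.

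The routine but essential verification in each case is the floor-function bookkeeping: I must confirm that the stated dimension of the target really does satisfy $km<\nf{n}{2k}$ for every residue class of $n$, so that the index inequality is strictly violated and a zero is guaranteed. Concretely, for part (2) I split into $n$ even and $n$ odd and check $2m<n-2$ or $2m<n-3$ respectively; for part (3) I split into the three congruence classes $n\equiv 1,2\pmod 4$, $n\equiv 0\pmod 4$, $n\equiv 3\pmod 4$ matching the definition of $r_n$, and verify that $3\lfloor(n-r_n-1)/3\rfloor\leq n-r_n-1<n-r_n=\nf{n}{6}$ in each.

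I expect the only genuine subtlety to be the passage from the nonvanishing of $\hat f$ to the statement $x^{km}=0$, specifically verifying that the induced orbit map $FV_{n,2k}\to\mRP^{km-1}$ sends the degree-one generator to $x$ rather than to $0$. This holds because the line bundle classified by the orbit map is the pullback of the tautological bundle on $\mRP^{km-1}$, which corresponds to the sign representation, and the double cover $V_{n,2k}\to FV_{n,2k}$ is precisely the sphere bundle of $\xi_{n,2k}$ whose first Stiefel--Whitney class is $x$; equivariance of $\hat f$ identifies these, so the generator does map to $x$. Once this is pinned down, the rest is the deterministic substitution and inequality-checking described above, and the theorem follows.
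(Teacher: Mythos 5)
Your proposal is correct and follows essentially the same route as the paper: the reduction via the equivariant map $\hat f$ and the orbit map $FV_{n,2k}\to\mRP^{km-1}$ forcing $x^{km}=0$ is exactly the paper's setup, and each part is then settled by the same verification that $km<\nf{n}{2k}$ (using $\nf{n}{2}=n-1$, $\nf{n}{4}\in\{n-2,n-3\}$, $\nf{n}{6}=n-r_n$) against the nonvanishing of $x^j$ for $j<\nf{n}{2k}$ from Theorem \ref{theorem of cohomology}. Your added justification that the orbit map pulls the degree-one generator back to $x$ (via $w_1$ of the line bundle $\xi_{n,2k}$ associated to the double cover) is a point the paper asserts without proof, so it is a welcome clarification rather than a deviation.
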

\begin{proof}
(1) In this case, $m=n-2$ and $k=1.$ \\
Hence, $x^{n-2}= 0.$ Now, $\nf{n}{2}= n-1.$ This gives us a contradiction.\\
(2) Here, $m= \lfloor\frac{n}{2}\rfloor- 2$ and $k= 2.$\\
If $n$ is odd, $x^{(n-1)-4}= 0$ and $\nf{n}{4}= n-3.$\\
If $n$ is even, $x^{n-4}= 0$ and $\nf{n}{4}= n-2.$ We get contradictions in both cases.\\
(3) Here $m= \lfloor\frac{n-r_n-1}{3}\rfloor$ and $k=3,$ we have the following information.
\[
\begin{tabular}{c|c|c|c|c}
$n$ & $r_n$ & $\lfloor\frac{n-r_n-1}{3}\rfloor$ & $3\lfloor\frac{n-r_n-1}{3}\rfloor$ & $\nf{n}{6}$ \\ \hline 
$\equiv 1 \pmod{4}$ & 5 & $\lfloor\frac{n-6}{3}\rfloor$ & $\leq n-6$ & $n-5$ \\ \hline 
$\equiv 2 \pmod{4}$ & 5 & $\lfloor\frac{n-6}{3}\rfloor$ & $\leq n-6$ & $n-5$ \\ \hline 
$\equiv 0 \pmod{4}$ & 4 & $\lfloor\frac{n-5}{3}\rfloor$ & $\leq n-5$ & $n-4$ \\ \hline 
$\equiv 3 \pmod{4}$ & 3 & $\lfloor\frac{n-4}{3}\rfloor$ & $\leq n-4$ & $n-3$
\end{tabular}
\]
Using similar argument, we arrive at contradictions in the other cases. This completes our proof.
\end{proof}

A way to obtain functions $l:S^{n-1} \to \mR$ is to fix a convex compact region in $\mR^n$ and for any direction $v$, let $l(v)$ be the distance between bounding hyperplanes perpendicular to $v$. The fact that two orthogonal directions give the same value imply that the region is bounded by a square times the orthogonal subspace. We thus get the following geometric consequences of Theorem \ref{resdiscgeo} by taking $f= l.$

\begin{theorem}
\begin{enumerate}
\item Given $n-2$ convex compact regions in $\mR^n$, there is a orthogonal decomposition $\mR^n=\mR^2 \oplus \mR^{n-2}$ such that each region is inscribed in a  square $\times \mR^{n-2}$. 
\item Given $\lfloor{\frac{n}{2}}\rfloor-2$ convex compact regions in $\mR^n$, there is a orthogonal decomposition $\mR^n=\mR^2 \oplus \mR^2 \oplus \mR^{n-4}$ such that each region is inscribed in a  square $\times$ square $\times \mR^{n-4}$. 
\item Given $\lfloor\frac{n-r_n-1}{3}\rfloor$ convex compact regions in $\mR^n$ (with $r_n$ as in Theorem \ref{resdiscgeo}), there is a orthogonal decomposition $\mR^n=\mR^2 \oplus \mR^2\oplus \mR^2 \oplus \mR^{n-6}$ such that each region is inscribed in a  square $\times$ square $\times$ square $\times \mR^{n-6}$. 
\end{enumerate}
\end{theorem}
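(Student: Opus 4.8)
The plan is to realize each given convex compact region as a width function on the sphere and then feed these functions into Theorem \ref{resdiscgeo}. For a compact convex region $K \subseteq \mR^n$ I would use the support function $h_K(v) = \max_{x \in K}\langle x, v\rangle$ and set
\[
l_K(v) = h_K(v) + h_K(-v),
\]
which is the distance between the two supporting hyperplanes of $K$ orthogonal to the unit vector $v$, i.e. the width of $K$ in the direction $v$. The first thing to record is that $h_K$, and hence $l_K$, is continuous (indeed Lipschitz) on $S^{n-1}$ for compact $K$, so each $l_K$ is a legitimate input for Theorem \ref{resdiscgeo}.

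For part (1) I would label the regions $K_1, \ldots, K_{n-2}$ and assemble $f = (l_{K_1}, \ldots, l_{K_{n-2}}) : S^{n-1} \to \mR^{n-2}$. Theorem \ref{resdiscgeo}(1) then supplies orthogonal $v_1, v_2 \in S^{n-1}$ with $f(v_1) = f(v_2)$, i.e. $l_{K_j}(v_1) = l_{K_j}(v_2)$ for every $j$. Setting $\mR^2 = \mathrm{span}(v_1, v_2)$ and taking $\mR^{n-2}$ to be its orthogonal complement, the projection of each $K_j$ onto $\mR^2$ lies in the box with side $l_{K_j}(v_1)$ along $v_1$ and side $l_{K_j}(v_2)$ along $v_2$; since these two widths coincide and $v_1 \perp v_2$, that box is a square $S_j$, so $K_j \subseteq S_j \times \mR^{n-2}$. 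This is exactly the asserted inscription, with a single decomposition common to all $n-2$ regions (the squares $S_j$ may of course differ in size from region to region).

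Parts (2) and (3) would run identically, replacing Theorem \ref{resdiscgeo}(1) by its parts (2) and (3). For (2) I would form $f : S^{n-1} \to \mR^{\lfloor \frac{n}{2}\rfloor - 2}$ and obtain mutually orthogonal $v_1, v_2, v_3, v_4$ with $f(v_1) = f(v_2)$ and $f(v_3) = f(v_4)$; the orthogonal planes $\mathrm{span}(v_1,v_2)$ and $\mathrm{span}(v_3,v_4)$ together with their common orthogonal complement give $\mR^2 \oplus \mR^2 \oplus \mR^{n-4}$, and equality of widths in each pair of orthogonal directions makes each region inscribed in $S \times S \times \mR^{n-4}$. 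For (3) the six mutually orthogonal vectors furnished by Theorem \ref{resdiscgeo}(3) split into three orthogonal planes, producing $\mR^2 \oplus \mR^2 \oplus \mR^2 \oplus \mR^{n-6}$ and the inscription in $S \times S \times S \times \mR^{n-6}$.

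The genuinely hard content — the non-existence of the relevant equivariant map, forced by comparing $x^{km}=0$ against $\nf{n}{2k}$ — is already packaged inside Theorem \ref{resdiscgeo}, so I do not expect a real obstacle at this stage. The only points needing care are the continuity of the width function, so that Theorem \ref{resdiscgeo} legitimately applies, and the elementary but essential observation that equality of widths in two orthogonal directions upgrades the two-dimensional bounding rectangle to an honest square; both are routine.
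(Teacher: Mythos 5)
Your proposal is correct and takes essentially the same approach as the paper: the paper likewise defines $f$ via the width function (the distance between the two bounding hyperplanes perpendicular to a direction $v$) for each region and applies Theorem \ref{resdiscgeo}, with equality of widths in orthogonal directions producing the square. Your write-up only makes explicit two points the paper leaves implicit, namely the continuity of the width function and the fact that the squares may vary from region to region while the orthogonal decomposition is common to all of them.
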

}
\begin{ack}
We thank the referee for the useful comments and suggestions. Second author thanks Parameswaran Sankaran for discussions about the problem. Second author also thanks Prasanna Kumar for proof-reading the final draft. Third author acknowledges the University Grant Commission - Ministry of Human Resource Development, New Delhi, for the financial support granted through the CSIR-UGC fellowship. Third author also thanks the Birla Institute of Technology and Science Pilani, K K Birla Goa Campus,  where the research was conducted. 
\end{ack}

\end{document}